\documentclass[12pt, a4paper]{amsart}
\usepackage[margin=1in]{geometry}  
\usepackage{amscd,latexsym,amsthm,amsfonts,amssymb,amsmath,amsxtra}
\usepackage{mathrsfs}
\usepackage{hyperref}
\usepackage{pdfsync}
\usepackage[all,cmtip]{xy}
\usepackage{graphicx}
\usepackage{float}
\usepackage{color}
\usepackage{hyperref}[colorlinks,linkcolor=blue]
\usepackage{tikz}
\usepackage{tikz-cd}
\usetikzlibrary{decorations.pathreplacing}

\usepackage{ytableau}
\usepackage{abstract}
\usepackage{appendix}
\usepackage{booktabs}
\usepackage{upgreek}
\numberwithin{equation}{section}
\pagestyle{plain}
\setcounter{secnumdepth}{5}

\pagestyle{headings}


\newtheorem{thm}{Theorem}[section]
\newtheorem{cor}[thm]{Corollary}
\newtheorem{prop}[thm]{Proposition}
\newtheorem{lem}[thm]{Lemma}

\newtheorem{defn}[thm]{Definition}

\newtheorem{examp}[thm]{Example}

\newcommand{\blam}{{\boldsymbol{\lambda}}}

\newcommand{\br}{{\mathbf{r}}}
\newcommand{\bc}{{\mathbf{c}}}

\newcommand{\BC}{{\mathbb {C}}}

\newcommand{\BH}{{\mathbb {H}}}

\newcommand{\BN}{{\mathbb {N}}}

\newcommand{\BR}{{\mathbb {R}}}
\newcommand{\BS}{{\mathbb {S}}}

\newcommand{\BZ}{{\mathbb {Z}}}



\newcommand{\CK}{{\mathcal {K}}}

\newcommand{\CO}{{\mathcal {O}}}
\newcommand{\CP}{{\mathcal {P}}}
\newcommand{\CQ}{{\mathcal {Q}}}
\newcommand{\CR}{{\mathcal {R}}}

\newcommand{\CU}{{\mathcal {U}}}

\newcommand{\CZ}{{\mathcal {Z}}}


\newcommand{\RA}{{\mathrm {A}}}

\newcommand{\RS}{{\mathrm {S}}}


\newcommand{\fb}{\mathfrak{b}}

\newcommand{\fg}{\mathfrak{g}}
\newcommand{\fh}{\mathfrak{h}}

\newcommand{\ft}{\mathfrak{t}}

\newcommand{\fgl}{{\mathfrak{gl}}}


\newcommand{\GL}{{\mathrm{GL}}}
\newcommand{\G}{{\mathrm{G}}}
\newcommand{\U}{{\mathrm{U}}}

\newcommand{\A}{{\mathrm{A}}}

\newcommand{\Ind}{{\mathrm{Ind}}}
\newcommand{\Irr}{{\mathrm{Irr}}}

\newcommand{\Inn}{{\mathrm{Inn}}}

\newcommand{\Ad}{{\mathrm{Ad}}}

\newcommand{\Lie}{{\mathrm{Lie}}}
\newcommand{\Ann}{{\mathrm{Ann}}}

\newcommand{\sgn}{{\mathrm{sgn}}}
\newcommand{\Rep}{{\mathrm{Rep}}}
\newcommand{\Nil}{{\mathrm{Nil}}}


\newcommand{\set}[2]{\{#1\,|\,#2\}}

\newcommand{\defmap}[5]{
           \begin{equation}
              \begin{aligned}
                   #1:\quad  & #2 &\longrightarrow &\quad #3 \\
                      \quad  & #4    &\longmapsto  &\quad #5
              \end{aligned}
           \end{equation}
          }

\renewcommand{\bar}{\overline}
\renewcommand{\tilde}{\widetilde}

\author[QIUTONG WANG]{QIUTONG WANG}

\address{DEPARTMENT OF MATHEMATICS, ZHEJIANG UNIVERSITY, HANGZHOU, CHINA}\email{wang.qt@zju.edu.cn}

\begin{document}
\ytableausetup{centertableaux}
\title[COUNTING IRREDUCIBLE REPRESENTATIONS]{COUNTING IRREDUCIBLE REPRESENTATIONS OF GENERAL LINEAR GROUPS AND UNITARY GROUPS}
\maketitle

\begin{abstract}
    Let $G$ be a general linear group over $\BR$, $\BC$, or $\BH$, or a real unitary group. In this paper, we precisely describe the number of isomorphism classes of irreducible Casselman-Wallach representations of $G$ with a given infinitesimal character and a given complex associated variety, expressed in terms of certain combinatorial data called painted Young diagrams and assigned Young diagrams.
\end{abstract}

\tableofcontents

\section{Introduction}

\subsection{Background and goals}

Let $G$ be a real reductive Lie group. Due to the work of Harish-Chandra, the set $\Irr(G)$ of infinitesimally equivalence classes of irreducible admissible representations (equivalently, isomorphism classes of Casselman-Wallach representations \cite[Chapter 11]{Wal}) of $G$ admit a partition according to infinitesimal characters:
$$\Irr(G) = \bigsqcup_{\lambda} \Irr_{\lambda}(G),$$
where each subset $\Irr_{\lambda}(G)$ is finite (\cite[Theorem 7]{Har}), the number of irreducible representations in $\Irr_{\lambda}(G)$ can be effectively computed by listing the corresponding Langlands parameters (\cite[Section 6]{ALTV}). 

Another invariant of irreducible Casselman-Wallach representations is the complex associated variety, which turns out to be the closure of a unique nilpotent orbit. This notion plays an important role in several areas, including Kirillov's orbit philosophy and the Kazhdan-Lusztig theory. 

In this paper, we study the further partition of each $\Irr_{\lambda}(G)$ according to the complex associated varieties:
$$\Irr_{\lambda}(G) = \bigsqcup_{\CO}\Irr_{\lambda}(G;\CO),$$
and we give a precise description of the size of each $\Irr_{\lambda}(G;\CO)$ when $G$ is a general linear group over $\BR$, $\BC$, or $\BH$, or a real unitary group.

The counting method employed in this paper was developed in \cite{BMSZ} by Barbasch, Ma, Sun, and Zhu, based on the theory of coherent continuation representations. This theory, which first appeared in \cite{BV83}, will be introduced in Section \ref{2}.

\subsection{Representation-theoretic and combinatorial notations}\label{1.2}
Let $G$ be a real reductive Lie group in Harish-Chandra's class, which is defined by the following properties:
\begin{itemize}
    \item the corresponding Lie algebra $\Lie (G)$ is reductive;
    \item $G$ has finitely many connected components;
    \item the connected subgroup $G_{ss}$ of $G$ corresponding to $\Lie(G)_{ss} = [\Lie(G),\Lie(G)]$ has a finite center;
    \item for any $g \in G$, the adjoint action $\Ad(g): \fg \to \fg$, lies in the group of inner automorphisms $\Inn(\fg)$. 
 \end{itemize}

Let $\Rep(G)$ denote the category of Casselman-Wallach representations of $G$, whose Grothendieck group (with coefficients in $\BC$) is denoted by $\CK(G)$. Let $\Irr (G)$ be the set of isomorphism classes of irreducible Casselman-Wallach representations of $G$, which form a basis of $\CK(G)$.

The universal enveloping algebra of $\fg$ is denoted by $\CU(\fg)$, and its center is denoted by $\CZ(\fg)$. Let $^{a}\fh$ denote the abstract Cartan subalgebra of $\fg$ (recall that for every Borel subalgebra $\fb$ of $\fg$, there is an identification $^{a}\fh = \fb/[\fb,\fb]$). Write 
$$\Delta^+ \subseteq \Delta \subseteq {^{a}\fh^*} \ \ \textrm{and} \ \ \check{\Delta}^+ \subseteq \check{\Delta} \subseteq {^{a}\fh}$$
for the positive root system, the root system, the positive coroot system, and coroot system, respectively, for the reductive Lie algebra $\fg$. Here we use a superscript $*$ to denote the linear dual of a vector space. Let $Q_{\fg} \subseteq {^a\fh}^*$ and $Q^{\fg} \subseteq {^a\fh}^*$ denote the root lattice and the weight group of $\fg$, respectively. Let $W \subseteq \GL(^{a}\fh)$ denote the abstract Weyl group. By the Harish-Chandra isomorphism, there is a 1-1 correspondence between $W$-orbits of $\nu \in {^{a}\fh^*}$ and algebraic characters $\chi_{\nu}:\CZ(\fg) \to \BC$. We say that an ideal of $\CU(\fg)$ has infinitesimal character $\nu$ if it contains the kernel of $\chi_{\nu}$.

Let $\Nil(\fg)$ (resp. $\Nil(\fg^*)$) denote the set of nilpotent elements in $[\fg,\fg]$ (resp. $[\fg,\fg]^*$), and define
\begin{equation}
   \bar{\Nil}(\fg) := \Inn(\fg) \backslash \Nil(\fg), \ \  \bar{\Nil}(\fg^*) := \Inn(\fg) \backslash \Nil(\fg^*),
\end{equation}
the set of nilpotent orbits in $\Nil(\fg)$ and $\Nil(\fg^*)$. The Killing form on $[\fg,\fg]$ yields an identification $\bar{\Nil}(\fg) = \bar{\Nil}(\fg^*)$.

For each Casselman-Wallach representation $V \in \Rep(G)$, we define in Section \ref{2.1} an $\Inn(\fg)$-stable Zariski closed subvariety $\mathrm{AV}_{\mathbb{C}}(V)$ of $\Nil(\fg^*)$, which is called the complex associated variety of $V$. Moreover, if $V$ is irreducible, we explain that $\mathrm{AV}_{\mathbb{C}}(V)$ is the closure of a unique nilpotent orbit.

Suppose $S$ is a $\Inn(\fg)$-stable Zariski closed subset of $\Nil(\fg^*)$, denote by $\Rep_S(G)$ the category of Casselman-Wallach representations of $G$ whose complex associated variety is contained in $S$.

For every $\nu \in {^{a}\fh^*}$, let $\Rep_{\nu}(G)$ and $\Rep_{\nu,S}(G)$ denote the full subcategories of $\Rep(G)$ and $\Rep_S(G)$, respectively, whose objects consisting of Casselman-Wallach representations with generalized infinitesimal character $\nu$. Denote by $\CK_{\nu}(G)$, and $\CK_{\nu,S}(G)$ the Grothendieck groups of $\Rep_{\nu}(G)$, and $\Rep_{\nu,S}(G)$, respectively. The set of isomorphism classes of irreducible objects in $\Rep_{\nu}(G)$ and $\Rep_{\nu,S}(G)$ will be denoted by $\Irr_{\nu}(G)$ and $\Irr_{\nu,S}(G)$, respectively. 

Suppose $\nu \in {^{a}\fh^*}$ and $\CO$ is a nilpotent orbit in $\fg^*$,   denote by $\Irr_{\nu}(G;\CO)$ the subset of $\Irr(G)$ consisting of isomorphism classes of irreducible representations with infinitesimal character $\nu \in {^{a}\fh^*}$ and complex associated variety $\bar{\CO}$ (the Zariski closure of $\CO$). Then we observe that for any $\Inn(\fg)$-stable Zariski closed subset $S$ of $\Nil(\fg^*)$
\begin{equation}\label{(1.3)}
    \Irr_{\nu,S}(G) = \bigsqcup_{\substack{\CO \in \bar{\Nil}(\fg) \\ \CO \subseteq S}} \Irr_{\nu}(G;\CO).
\end{equation}

This article aims to describe the number of elements in $\Irr_{\nu}(G;\CO)$ when $G$ is a general linear group over $\BR$, $\BH$, or $\BC$, or a real unitary group. We attach a label $\star$ to each class of these groups, as in the following table ($n,p,q \in \BN $).

\begin{center}
   \begin{tabular}{ccc}
      \toprule
      Label $\star $ & Classical Lie Group G & Complex Lie Group $G_{\BC}$    \\
      \midrule
      $A^{\BR}$      & $\GL_n(\BR)$          & $\GL_n(\BC)$                   \\
      $A^{\BH}$      & $\GL_{\frac{n}{2}}(\BH)$ \  ($n$ is even)      & $\GL_n(\BC)$                   \\
      $A^{\BC}$      & $\GL_n(\BC)$          & $\GL_n(\BC) \times \GL_n(\BC)$ \\
      $A$            & $\U(p,q)$              & $\GL_n(\BC)$ \ $(n = p + q)$                   \\
      \bottomrule
   \end{tabular}
\end{center}

However, in order to deal with the real unitary case, we need to introduce an additional setting where $\star = \tilde{A}$, $G = \tilde{\U}(p,q)$ and $G_{\BC} = \GL_{n}(\BC)$ ($n = p+q$). Here, $\tilde{\U}(p,q)$ denotes the (linear) double cover of $\U(p,q)$, defined via the pullback diagram.

\begin{center}
    \begin{tikzcd}
        {\tilde{\mathrm{U}}(p,q)} \arrow[rr] \arrow[dd, "\mathrm{det}^{\frac{1}{2}}"] &  & {\mathrm{U}(p,q)} \arrow[dd, "\mathrm{det}"] \\
        & &  \\
        \mathbb{S}^{1} \arrow[rr, "\left( - \right)^{2}"]  &  & \mathbb{S}^{1}                         
    \end{tikzcd} 
\end{center}

Note that in each case $\star = A^{\BR}, A^{\BH}, A^{\BC}, A, \tilde{A}$, there exist a natural map $\iota: G \to G_{\BC}$ which satisfies all the assumptions that will be introduced in Section \ref{2.2}.

For a Young diagram $\iota$, write
$$\br_1(\iota) \geq \br_2(\iota) \geq \br_3(\iota) \geq \cdots$$
for its row lengths, and similarly
$$\bc_1(\iota) \geq \bc_2(\iota) \geq \bc_3(\iota) \geq \cdots$$
for its column lengths. Let $|\iota|:= \sum_{i=1}^{\infty}\br_i(\iota) \in \BN$ denote the total size of $\iota$, and let $\mathrm{YD}_{n}$ ($n \geq 0$) denote the set of all Young diagrams with total size $n$. For a non-negative integer $n$, we define the set of partitions of $n$ by the descending sequences of positive integers $[d_1,\cdots,d_k]$ ($k \geq 1$), such that $\sum_{i=1}^{k}d_i = n$.  We identify this set with the set of partitions of $n$ by associating each Young diagram with the partition given by row lengths. By abuse of notation, we also use Young diagrams to represent partitions.

For any Young diagram $\iota$, we introduce the set $\mathrm{Box}(\iota)$ of boxes of $\iota$ as follows:
\begin{equation}
   \mathrm{Box}(\iota) := \set{(i,j) \in \BN^{+} \times \BN^{+}}{j \leq \br_i(\iota)}.
\end{equation}
A subset of $\BN^{+} \times \BN^{+}$ of this form also constitutes the Young diagram $\iota$.

We introduce five symbols $\bullet, s, r, c \ \textrm{and} \ d$, and make the following definition.

\begin{defn}(cf. \cite[Definition 2.9.]{BMSZ})
   A painting on a Young diagram $\iota$ is a map (we place a symbol in each box)
   $$\CP : \mathrm{Box}(\iota) \to \{ \bullet, s, r ,c ,d \}$$
   with the following properties

   \begin{enumerate}
      \item if we remove the boxes painted with $\{d\}, \{c,d\}, \{r,c,d\} \ \textrm{or} \  \{s ,r ,c ,d\}$, the remainder still constitutes a Young diagram;
      \item every row of $\iota$ has at most one box painted with $s$, and has at most one box painted with $r$;
      \item every column of $\iota$ has at most one box painted with $c$, and has at most one box painted with $d$.
   \end{enumerate}
   A painted Young diagram is a pair $(\iota, \CP)$ consisting of a Young diagram $\iota$ and a painting $\CP$ on $\iota$.
\end{defn}

\begin{defn}
   Suppose that $\star \in \{A^{\BR}, A^{\BH}\}$. A painting $\CP$ on a Young diagram $\iota$ has type $\star$ if
   \begin{enumerate}
      \item the symbols of $\CP$ are in
            $$\left\{
               \begin{array}{lr}
                  \{\bullet,c,d\}, & \textrm{if $\star = A^{\BR}$};\\                   
                  \{\bullet\},     & \textrm{if $\star = A^{\BH}$},
               \end{array}
               \right.
            $$
      \item every column of $\iota$ has an even number of boxes painted with $\bullet$.
   \end{enumerate}

   A painting $\CP$ on a Young diagram $\iota$ has type $A$ if
   \begin{enumerate}
      \item the symbols of $\CP$ are in $\{\bullet, s ,r\}$,
      \item every row of $\iota$ has an even number of boxes painted with $\bullet$.
   \end{enumerate}
    
   A painting $\CP$ on a Young diagram $\iota$ has type $A$ is called degenerate if it contains only the symbol $\bullet$.

   Denote by $\mathrm{P}_{\star}(\iota)$ the set of paintings on $\iota$ of type $\star$, and $\mathrm{P}_{A}'(\iota)$ the set of degenerate paintings on $\iota$ of type $A$.

   Now suppose that $\iota$ is a Young diagram and $\CP$ is a painting on $\iota$ of type $A$. Define the signature of $\CP$ to be the pair of non-negative integers
   \begin{equation}
    \left(p_{\CP}, q_{\CP}\right) := \left(\frac{\sharp(\CP^{-1}(\bullet))}{2} + \sharp(\CP^{-1}(s)), \frac{\sharp(\CP^{-1}(\bullet))}{2}+ \sharp(\CP^{-1}(r))\right),
   \end{equation}
   for every $p,q \in \BN$ such that $p + q = |\iota|$, we define
   \begin{equation}
        \mathrm{P}_{A}^{p,q}(\iota) := \set{\CP \in \mathrm{P}_{A}(\iota)}{(p_{\CP},q_{\CP}) = (p,q)}.       
   \end{equation}
\end{defn}

We also introduce the notion of assigned Young diagrams.

\begin{defn}
   For a Young diagram $\iota$, and a partition $[d_1, \cdots, d_k]$ of $|\iota|$.  An assignment of type $[d_1,d_2, \cdots, d_N]$ on $\iota$ is a map (we place a positive integer in each box)
   $$\CQ: \mathrm{Box}(\iota) \to \{1,2, \cdots,N\} $$
   with the following properties

   \begin{enumerate}
      \item for each $i \in \{1,2,\cdots,N\}$, the preimage $\CP^{-1}(i)$ has exactly $d_i$ elements;
      \item for each $1 \leq n \leq N$, if we remove the boxes assigned with $\{N+1, \cdots, |\iota|\}$, the reminder still constitutes a Young diagram;
      \item each positive integer occurs at most once in each column.
   \end{enumerate}
   An assigned Young diagram of type $[d_1,d_2, \cdots, d_N]$ is a pair $(\iota,\CQ)$ consisting of a Young diagram $\iota$ and an assignment $\CQ$ of type $[d_1,d_2, \cdots, d_N]$ on $\iota$. Denote by $\RA_{[d_1,d_2,\cdots,d_N]}(\iota)$ the set of all assignments on $\iota$ of type $[d_1,d_2,\cdots,d_N]$.
\end{defn}

\begin{examp}
    The following represents an assigned Young diagram of type $[4,3,2,2,1]$.
    \[
    \begin{ytableau}
        1&1&1&1&2\\
        2&2&3\\
        3&4&5\\
        4
    \end{ytableau}
    \]
    Each of the following does not represent an assigned Young diagram.
    \[
    \begin{ytableau}
        1\\
        1
    \end{ytableau}
    \qquad   
    \begin{ytableau}
        1&3\\
        2&2
    \end{ytableau}.
    \]
\end{examp}

\subsection{The main results}
In the statement of the following theorems, we will use the usual identifications $^{a}\fh^* = \BC^n \ \textrm{or} \ \BC^n \times \BC^n$ defined in Section \ref{3.1}, and identify nilpotent orbits in Lie algebras of type $A$ with partitions or Young diagrams in the usual way. For a partition $\iota$ denote by $\CO_\iota$ the corresponding nilpotent orbit, and for a nilpotent orbit $\CO$, denote by $\iota(\CO)$ the corresponding partition. We also define the ``row by row” union of Young diagrams $\mathop{\sqcup}\limits^r$ by 
$$[d_1,\cdots,d_k] \mathop{\sqcup}\limits^r [d_1' ,\cdots,d_{k'}'] =  [d_1 +d_1', \cdots,d_{k} + d_{k}', 0+d_{k+1}', \cdots, 0+d_{k'}']$$
where $k' \geq k$ are positive integers. For example, $[5,3,1] \mathop{\sqcup}\limits^r [4,3,3,3] = [9,6,4,3]$.

\begin{thm}\label{R}
   Suppose $G = \GL_n(\BR)$ ($n \in \BN$), and let $\CO \in \bar{\Nil}(\fg^*)$.
   \begin{enumerate}
        \item If $\nu \in {^{a}\fh^*} = \BC^n$ is integral, that is, the differences of its coordinates are integral, then its coordinates can be permuted such that 
        \[
        \nu = (\underbrace{\lambda_1, \cdots, \lambda_1}_{d_1}, \underbrace{\lambda_2, \cdots, \lambda_2}_{d_2}, \cdots, \underbrace{\lambda_k, \cdots, \lambda_k}_{d_k}) \in \BC^n,
        \]
        where $[d_1, d_2, \cdots , d_k]$ is a partition of $n$, and the $\lambda_i \in \BC$ satisfy the condition $\lambda_i - \lambda_j \in \BZ \setminus \{0\}$ for any $i \neq j$. Then
        \begin{equation}
            \sharp(\Irr_\nu(G;\CO)) = \sharp\left(\mathrm{P}_{A^{\BR}}(\iota(\CO))\right)\cdot \sharp\left(\RA_{[d_1,\cdots,d_k]}(\iota(\CO))\right).
        \end{equation}
        \item For an arbitrary $\nu \in {^{a}\fh}^*$, its coordinates can be permuted such that 
        \[
        \nu = (\blam_1, \cdots, \blam_r) \in \BC^n,
        \]
        \[
        \blam_i = (\underbrace{\lambda_{i,1}, \cdots, \lambda_{i,1}}_{d_{i,1}}, \cdots, \underbrace{\lambda_{i,k_i}, \cdots, \lambda_{i,k_i}}_{d_{i,k_i}}) \in \BC^{e_i} \ \ (e_i \geq 1),
        \]  
        where each $\blam_i$ is integral but $(\blam_i,\blam_j) \in \BC^{e_i+e_j}$ is not integral for any $i \neq j$, $[d_{i,1}, \cdots d_{i,k_i} ]$ is a partition of $e_i$, and the condition $\lambda_{i,p} - \lambda_{i,q} \in \BZ \setminus \{0\}$ holds for any $p \neq q$. Then
        \begin{equation}
            \sharp(\Irr_{\nu}(G;\CO)) = \sum_{\substack{(\iota_1,\cdots,\iota_r) \in \mathrm{YD}_{e_1} \times \cdots \times \mathrm{YD}_{e_r} \\ \iota_1 \mathop{\sqcup}\limits^r \iota_2 \cdots \mathop{\sqcup}\limits^r  \iota_r = \iota(\CO) }}\prod_{i=1}^{r}\sharp(\Irr_{\blam_i}(\GL_{e_i}(\BR);\CO_{\iota_i})).
        \end{equation}
    \end{enumerate}
\end{thm}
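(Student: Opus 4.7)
The plan is to prove part (1) first, separating the contribution of the associated variety from that of the infinitesimal character, and then deduce part (2) by a block-decomposition argument. For part (1), Stage A handles the regular integral case $[d_1,\cdots,d_k] = [1,\cdots,1]$: pick a regular integral representative $\nu_0$ in the $W$-orbit structure and apply the Langlands--Vogan classification of irreducible Casselman--Wallach representations of $\GL_n(\BR)$. Each class in $\Irr_{\nu_0}(G;\CO)$ is parametrized by a painted Young diagram on $\iota(\CO)$ of type $A^{\BR}$: columns with an even number of $\bullet$-cells correspond to the two-dimensional $\GL_1(\BC)$-blocks of the Langlands parameter, while $c$ and $d$ cells record the two sign choices on each one-dimensional $\GL_1(\BR)$-block. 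This yields $\sharp\Irr_{\nu_0}(G;\CO) = \sharp\mathrm{P}_{A^{\BR}}(\iota(\CO))$.

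Stage B passes from $\nu_0$ to the (possibly singular) $\nu$ via the Jantzen--Zuckerman translation principle, or equivalently via the $W = S_n$-action on $\CK_{\nu_0}(G)$ given by the coherent continuation representation of Section \ref{2}. Translation to the wall cut out by the multiplicity pattern $[d_1,\cdots,d_k]$ preserves the complex associated variety $\bar{\CO}$, and I expect the surviving Langlands parameters with $\mathrm{AV}_{\BC} = \bar{\CO}$ to be in bijection with pairs (painting on $\iota(\CO)$, assignment on $\iota(\CO)$ of type $[d_1,\cdots,d_k]$), where the assignment records how the coordinates of $\nu$ are distributed among the boxes of $\iota(\CO)$. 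The main obstacle is precisely this factorization: one must match the three axioms defining an assignment --- correct multiplicities, the prefix-Young-diagram property, and column injectivity --- with the combinatorial conditions that determine which Langlands parameters at $\nu_0$ survive translation while retaining complex associated variety $\bar{\CO}$. This is the type-$A^{\BR}$ specialization of the general coherent continuation framework of \cite{BMSZ} and carries the technical weight of the argument.

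For part (2), choose a standard parabolic $P = LN \subseteq G$ with Levi $L = \GL_{e_1}(\BR) \times \cdots \times \GL_{e_r}(\BR)$. Because the blocks $\blam_i$ are pairwise non-integral, the integral Weyl group of $\nu$ is $S_{e_1}\times\cdots\times S_{e_r}$, and every Langlands parameter with infinitesimal character $\nu$ decomposes uniquely as a direct sum of sub-parameters of infinitesimal character $\blam_i$; the corresponding standard modules are automatically irreducible since no intertwining operator can pair sub-parameters of disjoint integral infinitesimal character. Parabolic induction therefore produces a bijection
\[
\Irr_\nu(G) \ \longleftrightarrow\ \prod_{i=1}^{r} \Irr_{\blam_i}\bigl(\GL_{e_i}(\BR)\bigr),
\]
under which the complex associated variety of an induced irreducible is the $G$-saturation of $\CO_{\iota_1}\times\cdots\times\CO_{\iota_r}$; in type $A$ this saturation corresponds under the partition-orbit identification to the row-by-row union $\iota_1 \mathop{\sqcup}\limits^{r} \cdots \mathop{\sqcup}\limits^{r} \iota_r$. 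Summing over tuples $(\iota_1,\cdots,\iota_r)\in \mathrm{YD}_{e_1}\times\cdots\times\mathrm{YD}_{e_r}$ with row-by-row union equal to $\iota(\CO)$ then yields the stated formula.
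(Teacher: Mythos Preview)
Your Stage A contains a genuine error, not merely a gap: the claimed equality $\sharp\Irr_{\nu_0}(G;\CO) = \sharp\mathrm{P}_{A^{\BR}}(\iota(\CO))$ at regular integral $\nu_0$ is false. The correct count (which is the paper's formula specialized to $[d_1,\ldots,d_k]=[1,\ldots,1]$) is $\sharp\mathrm{P}_{A^{\BR}}(\iota(\CO))\cdot\sharp\RA_{[1,\ldots,1]}(\iota(\CO))$, and the second factor is the number of standard Young tableaux on $\iota(\CO)$, i.e.\ $\dim\phi(\iota(\CO))$. Already for $n=3$ and $\CO=\CO_{[2,1]}$ one has four paintings of type $A^{\BR}$ on $[2,1]$ but eight irreducibles with that associated variety at regular integral infinitesimal character (the fourteen irreducibles split as $4+8+2$ over the three orbits). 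Your proposed bijection between paintings and Langlands parameters with prescribed $\mathrm{AV}_{\BC}$ therefore cannot exist as stated: the painting records only the \emph{shape} of the parameter (how many $\BC^\times$-factors, how many $\BR^\times$-factors of each sign), not which coordinates of $\nu_0$ land in which block --- exactly the datum a standard tableau carries. With Stage A off by this factor, Stage B as you describe it cannot recover the product form either.

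The paper sidesteps all of this. It never computes $\mathrm{AV}_{\BC}$ of an individual Langlands quotient and never tracks translation functors. It applies the counting formula of \cite{BMSZ} (Theorem~\ref{counting}), which in the integral type-$A$ situation collapses to the single product
\[
\sharp\Irr_\nu(G;\CO)=[1_{W_\nu}:\phi(\iota(\CO))]\cdot[\phi(\iota(\CO)):\mathrm{Coh}_\Lambda(\CK(G))].
\]
The first factor is handled by Frobenius reciprocity and Pieri's rule (Proposition~\ref{Pieri}); the second by determining $\mathrm{Coh}_\Lambda(\CK(G))$ explicitly via Theorem~\ref{Coh} (classify Cartan subgroups $H_r$ of $\GL_n(\BR)$, compute the real Weyl groups and the characters $\sgn_\gamma$, enumerate cross-action orbits) and then applying Propositions~\ref{Pieri} and~\ref{branch}. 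The factorization into painting and assignment counts falls out of this symmetric-group computation, not from any parameter-by-parameter analysis.

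Your part (2) is essentially correct and is a legitimate alternative. The paper argues instead by exhibiting a $W(\Lambda)$-equivariant isomorphism $\mathrm{Coh}_\Lambda(\CK(G))\cong\boxtimes_i\mathrm{Coh}_{\Lambda_i}(\CK(\GL_{e_i}(\BR)))$ through a bijection of parameter sets (Propositions~\ref{varphi bij} and~\ref{Coh cong}), then reapplies the counting formula together with the $j$-induction identity of Proposition~\ref{2.9} to identify $\Irr(W(\Lambda);\CO)$ with tuples whose row-union is $\iota(\CO)$. Your route through irreducibility of parabolic induction from pairwise non-integral blocks and Lusztig--Spaltenstein orbit induction reaches the same place more geometrically, at the cost of invoking those two facts as inputs.
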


\begin{examp}
    \begin{enumerate}
        \item Recall that a partial order is defined on the set of nilpotent orbits by $\CO \leq \CO'$ if $\bar{\CO} \subseteq \bar{\CO'}$. It can also be described in terms of corresponding partitions as follows: 
        \[
            [d_1, d_2, \cdots , d_k] \leq [d_1', d_2', \cdots , d_{k'}']
        \]
        if the condition
        \[
            \sum_{1 \leq j \leq m} d_{j} \leq \sum_{1 \leq j \leq m} d_{j}' \  \ \textrm{for $1 \leq m \leq \mathrm{max}\{k,k'\}$},
        \]
        is satisfied. Here, zeros are added to the shorter partition if necessary.
        With the notation as in the integral case of Theorem \ref{R}, it is easy to verify that $\CO([d_1, d_2, \cdots, d_k])$ is the unique minimal nilpotent orbit in this partial order, whose closure is a complex associated variety of an irreducible Casselman-Wallach representation of $\GL_n(\BR)$ with infinitesimal character $\nu$.
        \item Suppose $G = \GL_n(\BR)$, recall that an irreducible Casselman-Wallach representation is called generic if it admits a Whittaker model or equivalently its complex associated variety is the closure of principle orbit (\cite{Vog78}), whose Young diagram is:
        \[
                \begin{ytableau}
                ~ & & & \cdots & ~ 
                \end{ytableau}.
        \]
        
        When $\nu = (\lambda_1,\cdots,\lambda_n)$ is regular and integral, which means $\lambda_i - \lambda_j \in \mathbb{Z} \backslash \{0\}$ for all $i \neq j$, there is only one assignment of type $[1,1,\cdots,1]$, and exactly $n+1$ paintings of type $A_{\BR}$ on this Young diagram. Consequently, the number of generic representation of regular integral infinitesimal character is $n+1$.
    \end{enumerate}
\end{examp}

\begin{thm}\label{H}
    Let $G= \GL_{\frac{n}{2}}(\BH)$ ($n \in \BN$ is even), and let $\CO \in \bar{\Nil}(\fg^*)$.
    \begin{enumerate}
        \item If $\nu \in {^a\fh^*} = \BC^n$ is integral, that is, the differences of its coordinates are integral, then its coordinates can be permuted such that 
        \[
        \nu =  (\underbrace{\lambda_1, \cdots, \lambda_1}_{d_1}, \underbrace{\lambda_2, \cdots, \lambda_2}_{d_2}, \cdots, \underbrace{\lambda_k, \cdots, \lambda_k}_{d_k} ) \in \BC^n,
        \]
        where $[d_1, d_2, \cdots, d_k]$ is a partition of $n$, and the $\lambda_i \in \BC$ satisfy the condition $\lambda_i - \lambda_j \in \BZ \backslash \{0\}$ for any $i \neq j$. Then
        \begin{equation}
            \sharp(\Irr_{\nu}(G;\CO)) = \sharp\left(\mathrm{P}_{A^{\BH}}(\iota(\CO))\right)\cdot \sharp\left(\A_{[d_1,\cdots,d_k]}(\iota(\CO))\right).
        \end{equation}
        \item For an arbitrary $\nu \in {^{a}\fh}$, its coordinates can be permuted such that
        \[    
        \nu = (\blam_1, \cdots, \blam_r) \in \BC^n,
        \]
        \[
            \blam_i = (\underbrace{\lambda_{i,1}, \cdots, \lambda_{i,1}}_{d_{i,1}},\cdots,\underbrace{\lambda_{i,k_i},\cdots,\lambda_{i,k_i}}_{d_{i,k_i}}) \in \BC^{e_i} \ \ (e_i \geq 1),
         \] 
        where each $\blam_i$ is integral but $(\blam_i,\blam_j) \in \BC^{e_i+e_j}$ is not integral for any $i \neq j$, $[d_{i,1}, \cdots, d_{i,k_i}]$ is a partition of $e_i$, and the condition $\lambda_{i,p} - \lambda_{i,q} \in \BZ \backslash \{0\}$ holds for any $p \neq q$. Then,
        \begin{equation}
            \sharp(\Irr_{\nu}(G;\CO)) = \left\{
            \begin{aligned}
                &\sum_{\substack{(\iota_1,\cdots,\iota_r) \in \mathrm{YD}_{e_1} \times \cdots \times \mathrm{YD}_{e_r} \\ \iota_1 \mathop{\sqcup}\limits^r \cdots  \mathop{\sqcup}\limits^r \iota_r = \iota(\CO)}} \prod_{i=1}^r \sharp\left(\Irr_{\blam_i}\left(\GL_{\frac{e_i}{2}}(\BH);\CO_{\iota_i}\right)\right), & \textrm{if each $e_i$ is even;}\\
                &0 , & \textrm{otherwise}.
            \end{aligned}
            \right.
        \end{equation}
    \end{enumerate}
\end{thm}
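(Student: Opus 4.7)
The plan is to adapt the strategy used to prove Theorem \ref{R}, substituting the quaternionic setting of $G = \GL_{\frac{n}{2}}(\BH)$ and leveraging the coherent continuation representation machinery to be developed in Section \ref{2} together with the classification of nilpotent orbits meeting the quaternionic real form. A key preliminary observation is that the complexification of $\fgl_{\frac{n}{2}}(\BH)$ is $\fgl_n(\BC)$, and a nilpotent orbit $\CO \subseteq \fgl_n(\BC)^*$ is the complex associated variety of some representation of $G$ if and only if every column of $\iota(\CO)$ has even length; this is precisely the condition under which $\mathrm{P}_{A^{\BH}}(\iota(\CO))$ is nonempty, in which case it is a singleton.

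For part (1), the approach is to apply the Barbasch--Ma--Sun--Zhu counting principle: the cardinality $\sharp(\Irr_\nu(G;\CO))$ is to be extracted from the coherent continuation representation of the integral Weyl group on $\CK_{\nu,\bar\CO}(G)$ modulo the image of $\CK_{\nu,\partial\bar\CO}(G)$. Since type $A$ coherent continuation representations factor through symmetric group representations indexed by partitions, these multiplicities are governed by combinatorics depending on the partition $\iota(\CO)$ and on the multiplicity sequence $[d_1, \cdots, d_k]$ of $\nu$. The factor $\sharp(\A_{[d_1,\cdots,d_k]}(\iota(\CO)))$ should count the admissible fillings encoding how infinitesimal character multiplicities distribute across the orbit geometry, while $\sharp(\mathrm{P}_{A^{\BH}}(\iota(\CO))) \in \{0,1\}$ enforces the condition that $\CO$ arise as a complex associated variety at all.

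For part (2), the strategy is a block decomposition with respect to the integral cosets of the coordinates of $\nu$. The translation principle, combined with parabolic induction from a block-diagonal Levi subgroup, should yield a bijection between $\Irr_\nu(G;\CO)$ and tuples of irreducible representations of smaller quaternionic general linear groups $\GL_{\frac{e_i}{2}}(\BH)$, one for each block $\blam_i \in \BC^{e_i}$, equipped with complex associated varieties $\CO_{\iota_i}$ whose row-union recovers $\iota(\CO)$. Crucially, for such a Levi subgroup to sit inside $\GL_{\frac{n}{2}}(\BH)$, each block dimension $e_i$ must be even; otherwise no such decomposition exists and the count vanishes. The row-union formula arises because Lusztig--Spaltenstein induction in type $A$ is precisely the row-by-row concatenation of partitions.

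The main obstacle I anticipate is establishing in part (1) that the combinatorial count from the coherent continuation side actually factors as the product $\sharp(\mathrm{P}_{A^{\BH}}(\iota(\CO))) \cdot \sharp(\A_{[d_1,\cdots,d_k]}(\iota(\CO)))$ rather than some entangled expression; in particular, the even-column constraint characteristic of type $A^{\BH}$ must be tracked carefully through the parameter analysis, as opposed to the richer painting symbols $c$ and $d$ available in the $A^{\BR}$ case, where the factorization for Theorem \ref{R} presumably already had to be verified. The parity vanishing in part (2) should then follow as a clean corollary of this block reduction, since no block of odd size $e_i$ can correspond to a quaternionic tensor factor.
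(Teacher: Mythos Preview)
Your high-level strategy is correct and aligned with the paper's, but there are two places where your sketch diverges from what the paper actually does, and one of them matters for how cleanly the argument goes through.

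For part (1), the paper does not extract the factorization from any delicate parameter analysis. It computes the coherent continuation representation directly: since $\GL_{n/2}(\BH)$ has a single conjugacy class of Cartan subgroups $H \cong (\BC^\times)^{n/2}$, with real Weyl group $W_H = \mathrm{S}_{n/2} \ltimes W(A_1)^{n/2} = \mathrm{W}_{n/2}$ and $\sgn_{\mathrm{im}} = \epsilon$, Theorem~\ref{Coh} gives $\mathrm{Coh}_\Lambda(\CK(G)) = \Ind_{\mathrm{W}_{n/2}}^{\mathrm{S}_n}\epsilon$ in one stroke. The counting formula then automatically presents $\sharp(\Irr_\nu(G;\CO))$ as a product of two multiplicities, and Proposition~\ref{branch} identifies $[\phi(\iota):\Ind_{\mathrm{W}_{n/2}}^{\mathrm{S}_n}\epsilon]$ with the even-column indicator $\sharp(\mathrm{P}_{A^\BH}(\iota))$. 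So the obstacle you anticipated dissolves once the coherent continuation representation is written down explicitly.

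For part (2), the paper does \emph{not} argue via parabolic induction producing a bijection on irreducibles. Instead it builds a bijection $\varphi$ of \emph{parameter sets} $\prod_i \CP_{\Lambda_i}(\G_{A^\BH}(V_i)) \to \CP_\Lambda(G)$, upgrades this to a $W(\Lambda)$-equivariant isomorphism of coherent continuation representations, and then applies the counting formula independently on each side. The parity obstruction when some $e_i$ is odd is detected not at the Levi level but earlier: $\xi(\nu)+\delta(\xi)$ fails to exponentiate to a character of $H\cong(\BC^\times)^{n/2}$ because coordinates in the same integral coset must pair off across the $z_i,\bar z_i$ slots. Your parabolic-induction route would also work (induction from a Levi with blocks in distinct integral cosets is irreducible), but it is a genuinely different argument and you would need to verify that the associated variety behaves as claimed under that induction, which the paper's coherent-continuation approach sidesteps entirely.
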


\begin{examp}
    \begin{enumerate}
    \item  Suppose $G = \GL_{4}(\BH)$, $\nu = (1,1,1,2,2,2,3,3)$.
           Then the complex associated variety of irreducible Casselman-Wallach representations with this infinitesimal character can only be the closure of the nilpotent orbit $\CO$, whose Young diagram is
           \[
                \begin{ytableau}
                ~ & & & \\
                 & & & 
                \end{ytableau}.
           \]
            And there is a unique assignment of type $[3,3,2]$ on this diagram, given by
            \[
                \begin{ytableau}
                    1 & 1 & 1 & 2\\
                    2 & 2 & 3 & 3
                \end{ytableau}.
            \]
            This implies that $\sharp(\Irr_{\nu}(G;\CO)) = 1$, and $\sharp(\Irr_{\nu}(G)) = 1$.

    \item   Suppose $G = \GL_{5}(\BH)$, $\nu = (1,1,1,1,2,2,2,3,3,4)$.       
            There are two nilpotent orbits $\CO_1$ and $\CO_2$, whose closures can occur as the complex associated variety of an irreducible Casselman-Wallach representation with this infinitesimal character. The corresponding Young diagrams are, respectively
            \[
                \begin{ytableau}
                    ~& & & & \\
                    & & & &
                \end{ytableau}
                \quad and \quad 
                \begin{ytableau}
                    ~&~&~&~\\
                    ~&~&~&~\\
                    ~\\
                    ~
                \end{ytableau}.
            \]
            
            On the first diagram, there are two assignments of type $[4,3,2,1]$, given by
            \[
                \begin{ytableau}
                    1&1&1&1&2\\
                    2&2&3&3&4
                \end{ytableau}
                \quad and \quad 
                \begin{ytableau}
                    1&1&1&1&3\\
                    2&2&2&3&4
                \end{ytableau}.
            \]
            
            On the second diagram, there is only one assignment of type $[4,3,2,1]$, given by
            \[
                \begin{ytableau}
                    1&1&1&1\\
                    2&2&2&3\\
                    3\\
                    4
                \end{ytableau}.
            \]
            This implies that $\sharp(\Irr_{\nu}(G;\CO_1)) = 2$ and $\sharp(\Irr_{\nu}(G;\CO_2)) = 1$.
    \end{enumerate}
\end{examp}

\begin{thm}\label{C}
    Suppose $G = \GL_{n}(\BC)$ ($n \in \BN$), and let $\CO = \CO_{\iota} \times \CO_{\iota'} \in \bar{\Nil}(\fg^*)$.
    \begin{enumerate}
        \item If $\nu \in {^a\fh^*} = \BC^n \times \BC^n$ is integral, that is, in each factor $\BC^n$, the differences of its coordinates are integral, then its coordinates can be permuted via Weyl group $\mathrm{S}_n \times \mathrm{S}_n$ such that
        \[
        \nu =  (\underbrace{\lambda_1, \cdots, \lambda_1}_{d_1}, \cdots, \underbrace{\lambda_k, \cdots, \lambda_k}_{d_k}, \underbrace{\lambda_1', \cdots, \lambda_1'}_{d_1'}, \cdots, \underbrace{\lambda_k', \cdots, \lambda_k'}_{d_{l}'} ) \in \BC^n \times \BC^n, 
        \]
        where $[d_1,d_2,\cdots,d_k]$ and $[d_1',d_2',\cdots,d_l']$ are partitions of $n$, and the $\lambda_i \in \BC$ satisfy the condition $\lambda_i - \lambda_j, \ \lambda_i' - \lambda_j' \in \BZ \backslash \{0\}$ for any $i \neq j$. Then
        \begin{equation}
            \sharp(\Irr_{\nu}(G;\CO)) = \left\{ 
            \begin{aligned}
                &\sharp\left(\A_{[d_1,\cdots,d_k]}(\iota)\right) \cdot \sharp\left(\A_{[d_1',\cdots,d_l']}(\iota')\right) \cdot \delta_{\iota,\iota'} , & \textrm{if $\lambda_1 - \lambda_1' \in \BZ$};\\
                &0, & \textrm{if $\lambda_1 - \lambda_1' \notin \BZ$}
            \end{aligned}
            \right.
        \end{equation}
        where $$\delta_{\iota,\iota'} = \left\{
        \begin{aligned}
            &0, & \textrm{if $\iota \neq  \iota'$;}\\
            &1, & \textrm{if $\iota = \iota'$}.
        \end{aligned}
        \right.$$
        \item For an arbitrary $\nu \in {^{a}\fh^*}$, its coordinates can be permuted via the Weyl group such that
        \[
        \nu = (\blam_1, \cdots, \blam_r, \blam_1', \cdots, \blam_s') \in \BC^n \times \BC^n,
        \]
        \begin{align}
            &\blam_i = (\underbrace{\lambda_{i,1}, \cdots, \lambda_{i,1}}_{d_{i,1}}, \cdots, \underbrace{\lambda_{i,k_i}, \cdots, \lambda_{i,k_i}}_{d_{i,k_i}}) \in \BC^{e_i},\\
            &\blam_j' = (\underbrace{\lambda_{j,1}, \cdots, \lambda_{j,1}}_{d_{j,1}'}, \cdots, \underbrace{\lambda_{j,l_j}, \cdots, \lambda_{j,l_j}}_{d_{j,l_j}'}) \in \BC^{e_j'},
        \end{align}
        where each $\blam_i$ (resp. $\blam'_j$) is integral but $(\blam_i,\blam_j) \in \BC^{e_i+e_j}$ (resp. $(\blam'_i,\blam'_j) \in \BC^{e'_i+e'_j}$) is not integral for any $i \neq j$. $[d_{i,1}, \cdots, d_{i,k_i}]$ (resp. $[d_{j,1}', \cdots, d_{j,l_j}']$) forms a partition of $e_i$ (resp. $e'_j$). The condition $\lambda_{i,p} - \lambda_{i,q} \in \BZ \setminus \{0\}$ (resp. $\lambda_{j,p}' - \lambda_{j,q}' \in \BZ \setminus \{0\}$) holds for any $p \neq q$. 
    
        Then there exist irreducible Casselman-Wallach representations with this infinitesimal character only if $r = s$, and we can permute the coordinates of $\nu$ under the Weyl group such that $e_i = e_i'$ and $\lambda_{i,1} - \lambda_{i,1}' \in \BZ$ for each $i \in \{1, 2, \cdots, r\}$. In this case, 
        \begin{equation}
            \sharp(\Irr_{\nu}(G;\CO)) = \sum_{\substack{(\iota_1,\cdots,\iota_r), (\iota_1',\cdots,\iota_r') \in \mathrm{YD}_{e_1} \times \cdots \times \mathrm{YD}_{e_r}\\ \mathop{\sqcup}\limits^r \iota_i = \iota, \ \mathop{\sqcup}\limits^r \iota_i' = \iota' }} \prod_{i=1}^{r} \sharp(\Irr_{\blam_i}(\GL_{e_i}(\BC);\CO_{\iota_i} \times \CO_{\iota_i'})).
        \end{equation}
    \end{enumerate}
\end{thm}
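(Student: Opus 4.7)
The plan is to adapt the coherent-continuation framework applied in Theorems \ref{R} and \ref{H} to the complex case. For $G = \GL_n(\BC)$ viewed as a real reductive group, the abstract Weyl group is $W = S_n \times S_n$ and nilpotent orbits in $\fg^*$ have the product form $\CO_\iota \times \CO_{\iota'}$. As in the earlier cases, I would first reduce the non-integral statement (2) to the integral statement (1). Via the Langlands classification, irreducible representations of $\GL_n(\BC)$ with infinitesimal character $\nu$ are parabolically induced from Levi subgroups of the form $\prod_i \GL_{e_i}(\BC)$, one factor per pair of matched integrality blocks on the two sides, and complex associated varieties combine via the row-wise union $\mathop{\sqcup}\limits^r$. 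The matching requirement ($r = s$ with $e_i = e'_i$ and $\lambda_{i,1}-\lambda'_{i,1}\in\BZ$ after reordering) reflects the well-known fact that the Zhelobenko parameters $(\lambda,\mu)\in\BC^n\times\BC^n$ of an irreducible $\GL_n(\BC)$-module satisfy $\lambda-\mu\in\BZ^n$; unmatched blocks contribute nothing. This reduces the problem to part (1).

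For the integral case I would invoke the general formula from Section \ref{2} that computes $\sharp(\Irr_\nu(G;\CO))$ as the multiplicity of a designated $W_\nu$-isotype in the cell (Springer) representation of $W$ attached to $\CO$, where $W_\nu$ denotes the stabilizer of $\nu$ in $W$. For $\GL_n(\BC)$, the cell representation attached to $\CO = \CO_\iota\times\CO_{\iota'}$ is the external tensor product $\sigma_\iota \boxtimes \sigma_{\iota'}$ of $S_n$-Specht modules, and $W_\nu = \bigl(\prod_a S_{d_a}\bigr)\times\bigl(\prod_b S_{d'_b}\bigr)$ is a product of Young subgroups. Because both $W$ and $W_\nu$ factor as products along the two copies of $S_n$, the multiplicity factors as well; combined with Young's rule this yields a product of Kostka numbers:
\begin{equation}
\sharp(\Irr_\nu(G;\CO)) \;=\; \delta_{\iota,\iota'}\cdot K_{\iota,[d_1,\ldots,d_k]}\cdot K_{\iota',[d'_1,\ldots,d'_l]}.
\end{equation}
The $\delta_{\iota,\iota'}$ factor and the constraint $\lambda_1-\lambda'_1\in\BZ$ enter because, for a complex classical group, the left and right complex associated varieties of an irreducible Harish-Chandra module must agree; this is a standard consequence of the Duflo--Zhelobenko parametrization of $\Irr(\GL_n(\BC))$ together with results of Joseph on associated varieties of primitive ideals. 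Finally, the Kostka number $K_{\iota,[d_1,\ldots,d_k]}$ equals $\sharp(\A_{[d_1,\ldots,d_k]}(\iota))$ via the bijection sending an assignment $\CQ$ to the chain of Young diagrams $\iota^{(n)} := \CQ^{-1}(\{1,\ldots,n\})$: conditions (2)--(3) in the definition of an assignment translate exactly to each skew shape $\iota^{(n)}/\iota^{(n-1)}$ being a horizontal strip of size $d_n$, which is the standard characterization of an SSYT of shape $\iota$ and content $[d_1,\ldots,d_k]$.

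The main obstacle is to fit the $\delta_{\iota,\iota'}$ factor and the integrality constraint on $\lambda_1-\lambda'_1$ cleanly into the coherent-continuation formalism: one must show directly that principal series of $\GL_n(\BC)$ with unmatched data have no irreducible subquotients contributing to $\Irr_\nu(G;\CO)$. Once that is settled, the rest of the argument parallels the $\GL_n(\BR)$ case of Theorem \ref{R}, with $S_n$ replaced throughout by $S_n\times S_n$ and a single Kostka factor replaced by a product of two.
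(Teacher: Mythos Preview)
Your proposal has a genuine gap in how you invoke the counting formula from Section~\ref{2}. You describe it as computing $\sharp(\Irr_\nu(G;\CO))$ as ``the multiplicity of a designated $W_\nu$-isotype in the cell (Springer) representation of $W$ attached to $\CO$,'' i.e.\ essentially $[1_{W_\nu}:\sigma]$ alone. But Theorem~\ref{counting} actually reads
\[
\sharp(\Irr_\nu(G;\CO)) \;=\; \sum_{\sigma \in \Irr(W(\Lambda);\CO)} [1_{W_\nu}:\sigma]\cdot[\sigma:\mathrm{Coh}_\Lambda(\CK(G))],
\]
and the second factor $[\sigma:\mathrm{Coh}_\Lambda(\CK(G))]$ is not optional. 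With your version, $[1_{W_\nu}:\phi(\iota)\boxtimes\phi(\iota')]$ already factors as the product of two Kostka numbers and produces no $\delta_{\iota,\iota'}$; you then try to supply $\delta_{\iota,\iota'}$ and the constraint $\lambda_1-\lambda_1'\in\BZ$ from outside the framework (Duflo--Zhelobenko, Joseph). That patching is exactly what you flag as the ``main obstacle,'' and it does not fit: once you have committed to Theorem~\ref{counting}, any extra constraint must come from the formula itself, not be imposed afterwards.

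In the paper both missing ingredients emerge from the second factor. For $\GL_n(\BC)$ there is a single conjugacy class of Cartan subgroups $H\cong(\BC^\times)^n$, all roots are complex, the real Weyl group is the diagonal $\mathrm{S}_n\hookrightarrow \mathrm{S}_n\times\mathrm{S}_n$, and $\sgn_{\mathrm{im}}$ is trivial. A short lemma shows that $\zeta(\nu)+\delta(\zeta)$ exponentiates to a character of $H$ if and only if $\lambda_1-\lambda_1'\in\BZ$; when it fails, $\CP_\Lambda(G)=\varnothing$ and there are no representations. When it holds, $W$ acts transitively on $\CP_\Lambda(G)$ and Theorem~\ref{Coh} gives
\[
\mathrm{Coh}_\Lambda(\CK(G)) \;=\; \Ind_{\mathrm{S}_n}^{\mathrm{S}_n\times\mathrm{S}_n} 1,
\]
so by Frobenius reciprocity and self-duality of $\mathrm{S}_n$-irreducibles,
\[
[\phi(\iota)\boxtimes\phi(\iota'):\mathrm{Coh}_\Lambda(\CK(G))] \;=\; [1_{\mathrm{S}_n}:\phi(\iota)\otimes\phi(\iota')] \;=\; \delta_{\iota,\iota'}.
\]
Thus $\delta_{\iota,\iota'}$ is the coherent-continuation multiplicity, and the $\lambda_1-\lambda_1'$ condition is the non-emptiness of the parameter set; neither requires any appeal to primitive-ideal theory beyond what is already built into Theorem~\ref{counting}.

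For part~(2), your reduction via Langlands/parabolic induction is in spirit correct, but the paper again stays inside the coherent-continuation machinery: the matching condition ($r=s$, $e_i=e_i'$, $\lambda_{i,1}-\lambda_{i,1}'\in\BZ$) is read off from when $\CP_\Lambda(G)\neq\varnothing$, and the reduction comes from a $W(\Lambda)$-equivariant bijection of parameter sets yielding $\mathrm{Coh}_\Lambda(\CK(G))\cong\boxtimes_i\mathrm{Coh}_{\Lambda_i}(\CK(\GL_{e_i}(\BC)))$, exactly as in the $A^{\BR}$ and $A^{\BH}$ cases. Your identification of Kostka numbers with $\sharp(\A_{[d_1,\ldots,d_k]}(\iota))$ is fine and matches the paper's use of Pieri's rule.
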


\begin{examp}
    Suppose $G = \GL_n(\BC)$, $\nu \in {^{a}\fh^*}$ is regular integral, and $\CO = \CO_{\min} \times \CO_{\min} \in \bar{\Nil}(\fg)$, where $\fg$ is identified with $\Lie(G) \times \Lie(G)$ as in section \ref{3.1}, the corresponding Young diagram of $\CO_{\min}$ is
    \[
    \begin{ytableau}
        ~&~\\
        ~\\
        \vdots\\
        ~
    \end{ytableau}.
    \]
    There are $n-1$ assignments of type $[\underbrace{1,1,\cdots,1}_n]$ on it, given by
    \[
    \begin{ytableau}
        1&_{i+1}\\
        2\\
        \vdots\\
        i\\
        _{i+2}\\
        \vdots\\
        n
    \end{ytableau},
    \]
    for $i = 1,2,\cdots,n-1$.
    This implies $\sharp(\Irr_{\nu}(G;\CO_{\min} \times \CO_{\min})) = (n-1)^2$, which is consistent with the results in \cite{BCLS}.
\end{examp}

\begin{thm}\label{U}
    Suppose $G = \U(p,q)$ ($p, q \in \BN$), and let $\CO \in \bar{\Nil}(\fg^*)$.
    \begin{enumerate}
        \item If $\nu \in {^a\fh^*} = \BC^n$ ($n = p+q$) is integral, that is, the differences of its coordinates are integral, then its coordinates can be permuted such that 
        \[ 
        \nu =  (\underbrace{\lambda_1, \cdots, \lambda_1}_{d_1}, \underbrace{\lambda_2, \cdots, \lambda_2}_{d_2}, \cdots, \underbrace{\lambda_k, \cdots, \lambda_k}_{d_k} ) \in \BC^n, 
        \]
        where $[d_1, d_2, \cdots, d_k]$ is a partition of $n$, and the $\lambda_i \in \BC$ satisfy the condition $\lambda_i - \lambda_j \in \BZ \backslash \{0\}$ for any $i \neq j$. Then
        \begin{equation}
            \sharp(\Irr_{\nu}(G;\CO)) = \left\{
            \begin{aligned}
                &\sharp\left(\mathrm{P}_{A}^{p,q}(\iota(\CO))\right) \cdot \sharp\left(\mathrm{A}_{[d_1,\cdots,d_{k}]}(\iota(\CO))\right), & \textrm{if $\lambda_1 \in \frac{n-1}{2} + \BZ$}, \\ 
                &\sharp(\mathrm{P}_{A}'(\iota(\CO)))\cdot\sharp\left(\mathrm{A}_{[d_1,\cdots,d_k]}(\iota(\CO))\right)\delta_{p,q}, & \textrm{if $\lambda_1 \in \frac{n}{2} + \BZ$},\\
                &0, & \textrm{otherwise}.
            \end{aligned}
            \right.
        \end{equation}

        \item For an arbitrary $\nu \in {^{a}\fh^*}$, there exist irreducible Casselman-Wallach representations with this infinitesimal character only if when the coordinates of $\nu$ can be permuted such that it has the form
        \[
            \nu = (\blam, \blam', \blam_1, \blam_1', \cdots, \blam_r, \blam_r') \in {^{a}\fh}^* = \BC^n \ (r \geq 0),
        \]
        where 
        \begin{align} 
            &\blam = (\underbrace{\lambda_1, \cdots, \lambda_1}_{d_1}, \cdots, \underbrace{\lambda_k, \cdots, \lambda_k}_{d_k}) \in \left(\frac{n-1}{2} + \BZ\right)^{e} \ (e \geq 0 ) ,\\
            &\blam' = (\underbrace{\lambda_1', \cdots, \lambda_1'}_{d_1'}, \cdots, \underbrace{\lambda_{k'}, \cdots, \lambda_{k'}}_{d_{k'}'}) \in \left(\frac{n}{2} + \BZ\right)^{e'} (e' \geq 0 \  \textrm{is even}),\\
            &\blam_i = (\underbrace{\lambda_{i,1}, \cdots, \lambda_{i,1}}_{d_{i,1}}, \cdots, \underbrace{\lambda_{i,k_i}, \cdots, \lambda_{i,k_i}}_{d_{i,k_i}}) \in \BC^{e_i} \ (e_i \geq 1),\\
            &\blam_i' = (\underbrace{\lambda_{i,1}', \cdots, \lambda_{i,1}'}_{d_{i,1}'}, \cdots, \underbrace{\lambda_{i,k_i'}, \cdots, \lambda_{i,k_i'}}_{d_{i,k_i}'}) \in \BC^{e_i},
        \end{align}
        and $[d_{i,1}, \cdots, d_{i,k_i}]$, $[d_{i,1}', \cdots, d_{i,k_i'}']$ are partitions of $e_i$, with the following conditions 
        \begin{itemize}
            \item each $\blam_i$ (resp. $\blam'_j$) is integral but $(\blam_i,\blam_j) \in \BC^{e_i+e_j}$ (resp. $(\blam'_i,\blam'_j) \in \BC^{e'_i+e'_j}$) is not integral for any $i \neq j$;
            \item $\lambda_{i,p} + \lambda_{i,q}' \in \BZ$ and  $\lambda_{i,p} \notin \frac{1}{2}\BZ$ for any $i \in \{1, \cdots, r\}$;
            \item $\frac{e'}{2} + e_1 + e_2 + \cdots + e_r \leq \mathrm{max}\{p,q\}$.
        \end{itemize}
        In this case, let $p' = p - \left(\frac{e'}{2} + e_1 + \cdots + e_r\right) $, $q' = q - \left(\frac{e'}{2} + e_1 + \cdots + e_r\right)$, if $p+q$ is even, then
        \begin{align*}
            \sharp(\Irr_{\nu}(G;\CO)) = & \sum_{\substack{(\iota, \iota', \iota_1,\iota_1' \cdots,\iota_r, \iota_r') \in \mathrm{YD}_{e} \times \mathrm{YD}_{e'} \times \mathrm{YD}_{e_1} \times \mathrm{YD}_{e_1} \times \cdots \times \mathrm{YD}_{e_r} \times \mathrm{YD}_{e_r}  \\ \iota \mathop{\sqcup}\limits^r \iota' \mathop{\sqcup}\limits^r \iota_1 \mathop{\sqcup}\limits^r \iota_1 \mathop{\sqcup}\limits^r \cdots  \mathop{\sqcup}\limits^r \iota_r \mathop{\sqcup}\limits^r \iota_r   = \iota(\CO)}}  \sharp(\Irr_{\blam}(\U(p',q');\CO(\iota')))\\
            & \cdot \sharp\left(\Irr_{\blam'}\left(\U\left(\frac{e'}{2},\frac{e'}{2}\right);\CO(\iota)\right)\right)\cdot \prod_{i=1}^{r} \sharp\left(\Irr_{(\blam_i,\blam_i')}(\GL_{e_i}(\BC);\CO(\iota_i)\times \CO(\iota_i'))\right),
        \end{align*}
        
        If $p+q$ is odd, then
        \begin{align*}
            \sharp(\Irr_{\nu}(G;\CO)) = & \sum_{\substack{(\iota, \iota', \iota_1,\iota_1' \cdots,\iota_r, \iota_r') \in \mathrm{YD}_{e} \times \mathrm{YD}_{e'} \times \mathrm{YD}_{e_1} \times \mathrm{YD}_{e_1} \times \cdots \times \mathrm{YD}_{e_r} \times \mathrm{YD}_{e_r}  \\ \iota \mathop{\sqcup}\limits^r \iota' \mathop{\sqcup}\limits^r \iota_1 \mathop{\sqcup}\limits^r \iota_1 \mathop{\sqcup}\limits^r \cdots  \mathop{\sqcup}\limits^r \iota_r \mathop{\sqcup}\limits^r \iota_r   = \iota(\CO)}} \sharp(\Irr_{\blam}(\U(p',q');\CO(\iota')))\\
            & \cdot \sharp\left(\Irr_{\blam' + \left(\frac{1}{2},\cdots,\frac{1}{2}\right)}\left(\U\left(\frac{e'}{2},\frac{e'}{2}\right);\CO(\iota)\right)\right)\cdot \prod_{i=1}^{r} \sharp\left(\Irr_{(\blam_i,\blam_i')}(\GL_{e_i}(\BC);\CO(\iota_i)\times \CO(\iota_i'))\right).
        \end{align*}
    \end{enumerate}
\end{thm}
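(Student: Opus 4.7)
The plan is to apply the general counting framework of coherent continuation representations from \cite{BMSZ}, recalled in Section \ref{2}, which expresses $\sharp(\Irr_\nu(G;\CO))$ as a multiplicity in a Weyl group representation attached to $\CO$ and $\nu$. The overall strategy closely parallels the arguments for Theorems \ref{R}, \ref{H}, and \ref{C}, but with several new features arising from the presence of the double cover $\tilde{\U}(p,q)$ and from the fact that only certain cosets of $\frac{1}{2}\BZ$ can appear as $\lambda_1$.

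For part (1), I would first use translation functors to reduce to the case of a regular integral infinitesimal character, at which point the assignment factor $\sharp(\mathrm{A}_{[d_1,\ldots,d_k]}(\iota(\CO)))$ emerges as the combinatorial bookkeeping of distributing the multiplicities $d_i$ compatibly with a descending chain of Young sub-diagrams, exactly as in the proof of Theorem \ref{R}. In the regular integral case, the remaining multiplicity equals the number of Langlands parameters for $G$ with associated variety $\bar{\CO}$. When $\lambda_1 \in \frac{n-1}{2} + \BZ$, the infinitesimal character is genuine for $\U(p,q)$ itself, and an explicit analysis of Vogan's classification identifies these parameters with paintings of type $A$ with signature $(p,q)$: the $\bullet$ symbols mark rows absorbed by a principal-series block of signature $(1,1)$, while $s$ and $r$ record residual sign choices on the remaining rows. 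When $\lambda_1 \in \frac{n}{2} + \BZ$, the infinitesimal character is genuine only for the double cover $\tilde{\U}(p,q)$ (the $\tilde{A}$ case of Section \ref{1.2}), the Langlands parameters become the degenerate paintings, and the constraint that the cover carries the relevant genuine central character forces $p = q$, yielding the $\delta_{p,q}$ factor. For $\lambda_1 \notin \frac{1}{2}\BZ$, the set $\Irr_\nu(G;\CO)$ is empty.

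For part (2), I would apply parabolic induction from a Levi subgroup obtained by grouping the coordinates of $\nu$ according to their integrality classes, giving
\[L \cong \U(p',q') \times \U\!\left(\tfrac{e'}{2}, \tfrac{e'}{2}\right) \times \prod_{i=1}^{r} \GL_{e_i}(\BC),\]
where the middle factor is replaced by its double cover when the half-integrality coset of $\blam'$ requires it. The integral block $\blam$ is fed into the first factor, the half-integral block $\blam'$ into the second, and each non-(half-)integral pair $(\blam_i, \blam_i')$ into a $\GL_{e_i}(\BC)$ factor. Compatibility of coherent continuation with parabolic induction produces the product of multiplicities, while Borho-Brylinski's description of induced nilpotent orbits, combined with the behavior of the associated variety under parabolic induction, converts the orbit data on $L$ into the sum over row-union decompositions $\iota \mathop{\sqcup}\limits^r \iota' \mathop{\sqcup}\limits^r \iota_1 \mathop{\sqcup}\limits^r \iota_1 \mathop{\sqcup}\limits^r \cdots = \iota(\CO)$ appearing in the statement. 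The parity split between $p+q$ even and odd reflects which of the two genuine central characters of the middle factor must be selected so that the global twist matches the half-integral coset of $\blam'$, encoded by the shift $(\tfrac{1}{2}, \ldots, \tfrac{1}{2})$ in the odd case.

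The main obstacle I anticipate is the careful bookkeeping around the double cover and the two half-integrality cosets: (a) verifying that the integral-case dichotomy in part (1) is handled without gap or overlap, so that the $\delta_{p,q}$ factor emerges precisely from the genuineness constraint on $\tilde{\U}(p,q)$; and (b) justifying that in part (2) the coherent continuation multiplicities factor strictly along the integrality-class blocks of $\nu$, with no cross-contamination between the unitary factor and the $\GL_{e_i}(\BC)$ factors. The latter should reduce to a block decomposition of the coherent continuation $W$-module along integrality cosets of $\nu$, following from the analogous block decomposition of $\CK_\nu(G)$ together with the multiplicativity of Vogan's coherent families under parabolic induction.
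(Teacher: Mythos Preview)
Your proposal has the correct overall framework (the BMSZ counting formula via coherent continuation representations) but diverges from the paper's actual argument in several places, and contains one genuine misunderstanding.

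For part (1), the paper does \emph{not} reduce to a regular infinitesimal character via translation functors and then count Langlands parameters with a prescribed associated variety. It applies the counting formula of Theorem \ref{counting} directly at $\nu$: since $\Irr(W;\CO) = \{\phi(\iota(\CO))\}$ is a singleton in type $A$, one has
\[
\sharp(\Irr_\nu(G;\CO)) = [1_{W_\nu}:\phi(\iota(\CO))]\cdot[\phi(\iota(\CO)):\mathrm{Coh}_\Lambda(\CK(G))].
\]
The first factor is computed by Frobenius reciprocity and Pieri's rule, giving $\sharp(\mathrm{A}_{[d_1,\ldots,d_k]}(\iota(\CO)))$. The second is computed by writing $\mathrm{Coh}_\Lambda(\CK(G))$ explicitly via Theorem \ref{Coh} as a sum of inductions $\Ind_{W_\gamma}^{W}\sgn_\gamma$, after classifying the Cartan subgroups $H_s \cong (\BC^\times)^s \times (\BS^1)^{p-s}\times(\BS^1)^{q-s}$ and their real Weyl groups.

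Your explanation of the $\delta_{p,q}$ factor is where the real gap lies. The constraint $p=q$ in the case $\lambda_1 \in \frac{n}{2}+\BZ$ has nothing to do with genuineness for the double cover; the double cover does not enter part (1) at all. Rather, one checks for each Cartan $H_s$ whether $\zeta(\nu)+\delta(\zeta)$ exponentiates to a continuous character of $H_s$. When $\lambda_i \in \frac{n-1}{2}+\BZ$ this holds for every $H_s$; when $\lambda_i \in \frac{n}{2}+\BZ$ it holds only for the maximally split Cartan $H_p$, which exists precisely when $p=q$; otherwise it fails for all $H_s$ and $\mathscr{P}_\Lambda(G)=\varnothing$. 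This trichotomy is what produces the three cases, and in the middle case $\mathrm{Coh}_\Lambda(\CK(G))=\Ind_{\mathrm{W}_p}^{\mathrm{S}_{2p}}1$, whence the degenerate paintings via Proposition \ref{branch}.

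For part (2), the paper again does not invoke parabolic induction or Borho--Brylinski induced orbits. It builds an explicit bijection $\varphi$ between $\CP_\Lambda(G)$ and a product of parameter sets for $\G_A(W)$, $\G_\star(W')$ (with $\star=A$ or $\tilde{A}$ according to the parity of $p+q$), and the $\GL(W_i)$, then shows $\varphi$ is $W(\Lambda)$-equivariant for the cross action and compatible with Cayley transforms, yielding a tensor factorization of $\mathrm{Coh}_\Lambda(\CK(G))$ as $W(\Lambda)$-module. The row-union condition on Young diagrams then comes from the $j$-induction formula of Proposition \ref{2.9}, not from induced nilpotent orbits. The double cover $\tilde{\U}(e'/2,e'/2)$ enters only here, in the odd-parity case, because the character on the factor $H_0'$ needed to assemble $\Gamma$ is genuine rather than descending to $\U(e'/2,e'/2)$; this is what produces the $\frac{1}{2}$-shift in the final formula.
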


\begin{examp}
    Suppose $G = \U(2,1)$, and $\nu = (1,1,2) \in \BC^3$. Let $\CO_1$, $\CO_2$, and $\CO_3$ be the nilpotent orbits of $\fg$, whose Young diagrams are, respectively
    \[
        \begin{ytableau}
            ~&~&~
        \end{ytableau}
    \qquad
        \begin{ytableau}
            ~&~\\
            ~
        \end{ytableau}
    \qquad
        \begin{ytableau}
            ~\\
            ~\\
            ~
        \end{ytableau}
    \]
    the number of assignments of type $[2,1]$ on $\CO_1$, $\CO_2$, and $\CO_3$ are, respectively $1$, $1$, and $0$.

    On the first diagram, there is only one painting, given by
    \[
        \begin{ytableau}
            \bullet & \bullet & s
        \end{ytableau}.
    \]

    On the second diagram, there are two paintings, given by
    \[
    \begin{ytableau}
        \bullet & \bullet\\
        s    
    \end{ytableau}
    \quad and \quad
    \begin{ytableau}
        s & r\\
        s
    \end{ytableau}.
    \]
    
    So we can conclude that 
    \begin{itemize}
        \item $\sharp(\Irr_{\nu}(G;\CO_{1})) = 1$;
        \item $\sharp(\Irr_{\nu}(G;\CO_{2})) = 2$;
        \item $\sharp(\Irr_{\nu}(G;\CO_{3})) = 0$.
    \end{itemize}

\end{examp}

\subsection{Structure of this article}

In Section 2, we present the preliminaries for the counting formula, including coherent families, coherent continuation representations, and the representation theory of Weyl groups. In Section 3, we apply the theories developed in the previous sections to the specific cases of general linear groups over $\BR$, $\BC$, and $\BH$, and real unitary groups, obtaining explicit counting results.

\section{Preliminaries}\label{2}

\subsection{Associated varieties}\label{2.1}

Following the notations introduced in Section \ref{1.2} , denote by $(\CU(\fg)_{k})_{k = 0,1,2,\cdots}$ the natural increasing filtration of $\CU(\fg)$, where $\CU(\fg)_{k}$ is the subspace of $\CU(\fg)$ spanned by elements of the form $X_{1} X_{2} \cdots X_{m}$ for $0 \leq m \leq k$, with $X_{j} \in \fg$ for $1 \leq j \leq m$. By the Poincar\'e-Birkhoff-Witt theorem, we can identify the associated ring
\[
    \mathrm{gr}(\CU(\fg)) = \bigoplus_{i \geq 0}\CU(\fg)_{i+1} / \CU(\fg)_{i}
\]   
with the symmetric algebra $\mathrm{S}(\fg)$ in the canonical way.

For every two-sided ideal $I$ of $\CU(\fg)$, we define the associated variety $\mathrm{AV}(I)$ of $I$ as the subvariety of $\fg^*$ annihilated by the ideal $\mathrm{gr}(I) := \bigoplus_{i \geq 0} (I \cap \CU(\fg)_{i+1})/(I \cap \CU(\fg)_i) \subseteq \RS(\fg)$, which is obviously stable under the action of $\Inn(\fg)$ on $\fg^*$. In particular, if $I$ is $\CZ(\fg)$-finite, which means the algebra $\CZ(\fg) / (I \cap \CZ(\fg))$ has finite dimension, then the associated variety $\mathrm{AV}(I)$ is a Zariski closed subset of $\Nil(\fg^*)$ (see \cite[Corollary 5.3]{Vog91}).

If $V \in \Rep(G)$ is a Casselman-Wallach representation of $G$, then its annihilator 
$$\Ann(V) := \{X \in \CU(\fg) | X\cdot v = 0 \ \textrm{for any  $v \in V$} \} \subseteq \CU(\fg)$$
is automatically a $\CZ(\fg)$-finite two-sided ideal, we define the complex associated variety $\mathrm{AV}_{\mathbb{C}}(V)$ of $V$ as the associated variety of the ideal $\Ann(V)$, which is a $\Inn(\fg)$-stable closed subvariety of $\Nil(\fg^*)$.  Suppose $S$ is a $\Inn(\fg)$-stable Zariski closed subset of $\Nil(\fg^*)$, denote by $\Rep_S(G)$ the category of Casselman-Wallach representations of $G$ whose complex associated variety is contained in $S$.

By the work of Borho and Brylinski \cite{BB}, and Joseph \cite{Jos}, the associated variety of any primitive ideal (annihilator of an irreducible $\fg$-module) is the Zariski closure of a single nilpotent orbit. This implies the complex associated variety of any irreducible Casselman-Wallach representation is the Zariski closure of a single nilpotent orbit.

\subsection{Genuine representations of \texorpdfstring{$\tilde{\U}(p,q)$}{U(p,q)}}

In this section we assume $G = \tilde{\U}(p,q)$. A representation of $\tilde{\U}(p,q)$ is called genuine if the central subgroup $\{\pm 1\}$, which is the kernel of the covering homomorphism $\tilde{\U}(p,q) \to \U(p,q)$ acts on it through the nontrivial character. Let $\Rep^{\mathrm{gen}}\left(\tilde{\U}(p,q)\right)$ and $\Rep_{\nu}^{\mathrm{gen}}\left(\tilde{\U}(p,q)\right)$ denote the full subcategory of genuine representations and the full subcategory of genuine representations with infinitesimal character $\nu$, respectively. Their Grothendieck groups are denoted by $\CK^{\mathrm{gen}}\left(\tilde{\U}(p,q)\right)$ and $\CK_{\nu}^{\mathrm{gen}}\left(\tilde{\U}(p,q)\right)$, respectively. The irreducible objects in these categories are denoted by $\Irr^{\mathrm{gen}}\left(\tilde{\U}(p,q)\right)$ and $\Irr_{\nu}^{\mathrm{gen}}\left(\tilde{\U}(p,q)\right)$, respectively.

Let $\Irr_{\nu}^{\mathrm{gen}}\left(\tilde{\U}(p,q);\CO\right)$ denote the subset of $\Irr_{\nu}^{\mathrm{gen}}\left(\tilde{\U}(p,q)\right)$ consisting of representations with complex associated variety $\bar{\CO}$.

Note that there is a bijection
\defmap{\psi}{\Irr_{\nu}^{\mathrm{gen}}\left(\tilde{\U}(p,q);\CO\right)}{\Irr_{\nu + \mu}\left(\U(p,q);\CO\right)}{V}{V \otimes \mathrm{det}^{\frac{1}{2}}}
where $\mu \in {^{a}\fh^*}$ is the only element in the $W$-orbit corresponds to the infinitesimal character of $\mathrm{det}^{\frac{1}{2}}$. Therefore, the number of genuine irreducible representations of $\tilde{\U}(p,q)$ with infinitesimal character $\nu$ and complex associated variety $\bar{\CO}$ is the same as the number of irreducible representations of $\U(p,q)$ with infinitesimal character $\nu + \mu$ and complex associated variety $\bar{\CO}$.

\subsection{Generalities on coherent families of Casselman-Wallach representations}\label{2.2}
This section is devoted to the introduction of coherent continuation representations, as presented in \cite{BMSZ}.

Let $G$ be a real reductive group in Harish-Chandra's class. Fix a connected complex reductive Lie group $G_\BC$, together with a homomorphism of real Lie groups $\iota: G \to G_\BC$ such that the differential has the following properties:
\begin{itemize}
   \item the kernel of $\mathrm{d}\iota$ is contained in the center of $\mathrm{Lie}(G)$;
   \item the image of $\mathrm{d}\iota$ is a real form of $\mathrm{Lie}(G_\BC)$.
\end{itemize}
It can be verified directly from the above assumption that the homomorphism $\iota$ induces a unique morphism between the abstract Cartan subalgebra of $\Lie(G)$ and that of $\Lie(G_{\BC})$. The analytic weight lattice $Q_{an}$ of $G_\BC$ is identified with a subgroup of ${^{a}\fh^*}$ via this morphism, and is denoted by $Q_{\iota} \subseteq  {^{a}\fh^*}$. Moreover, $Q_{\iota}$ is $W$-stable and $Q_{\fg} \subseteq Q_{\iota} \subseteq Q^{\fg}$. In the rest of this section, we fix a $Q_{\iota}$-coset $\Lambda = \lambda + Q_{\iota} \subseteq {^a\fh^*}$. Let $W_{\Lambda}$ denote the stabilizer of $\Lambda$ in $W$, namely, 
$$W_{\Lambda} := \set{w \in W}{w\lambda - \lambda \in Q_{\iota}}.$$

We also put 
\begin{equation}
    \Delta(\Lambda) := \set{\alpha \in \Delta}{\textrm{$\langle \check{\alpha} , \nu \rangle \in \BZ$ for some (and all) $\nu \in \Lambda$}}.
\end{equation}
This is a root system with the corresponding coroots 
\begin{equation}
    \check{\Delta}(\Lambda) := \set{\check{\alpha} \in \check{\Delta}}{\textrm{$\langle \check{\alpha}, \nu \rangle \in \BZ$ for some (and all) $\alpha \in \Delta$}}.
\end{equation}
Let $W(\Lambda) \subseteq W$ denote the Weyl group of the root system $\Delta(\Lambda)$, which is referred to as the integral Weyl group associated with $\Lambda$.

There are natural equivalences of categories:
\begin{equation}
   \CR(\fg, Q_{\iota}) \cong \CR(\mathrm{Lie}(G_\BC),Q_{\mathrm{an}}) \cong \CR_{\mathrm{hol}}(G_{\BC}),
\end{equation}
which induce canonical isomorphisms between the corresponding Grothendieck groups, where $\CR_{\mathrm{hol}}(G_{\BC})$ is the category of finite dimensional holomorphic representation of $G_{\BC}$. So the Grothendieck group $\CK(G)$ can be viewed as a $\CR(\fg, Q_{\iota})$-module.
\begin{defn}
   A $\CK(G)$-valued $\Lambda$-coherent family is a map
   $$\Phi: \Lambda \to \CK(G),$$
   such that:
   \begin{itemize}
      \item for any $\nu \in \Lambda$, $\Phi(\nu) \in \CK_{\nu}(G)$,
      \item for any $F \in \CR(\fg, Q_{\iota})$ and $\nu \in \Lambda$, $F \cdot (\Phi(\nu)) = \sum_{\mu \in \Delta(F)} \Phi(\nu + \mu)$ (where $\Delta(F)$ is the set of weights of $F$ counted multiplicity).
   \end{itemize}
\end{defn}

Let $\mathrm{Coh}_{\Lambda}(\CK(G))$ denote the complex vector space of all coherent families on $\Lambda$. It is a representation of $W_{\Lambda}$ under the action
$$(w \cdot \Psi)(\nu) = \Psi(w^{-1}\nu), \ \textrm{for any $w \in W_{\Lambda}$, $\Psi \in \mathrm{Coh}_{\Lambda}(\CK(G))$, $\nu \in \Lambda$.}$$
This is called the coherent continuation representation.

Note that all definitions remain valid if we replace $\CK(G)$ by $\CK^{\mathrm{gen}}(G)$ in the $\tilde{A}$ case.

We then introduce the notion of parameters for coherent continuation representations and provide an explicit formula for them.
Suppose $H$ is a Cartan subgroup of $G$, its Lie complexified Lie algebra is denoted by $\fh$, which is a Cartan subalgebra of $\fg$. $H$ has a unique maximal compact subgroup $T$. Denote by $\Delta_{\fh} \subseteq \fh^*$ the root system of $\fg$. A root is called imaginary if $\check{\alpha} \in \ft$, where $\ft = \mathrm{Lie}(T)$, or equivalently if it takes purely imaginary values on $\fh^*$. Moreover, an imaginary root $\alpha$ is called compact imaginary if the corresponding root space $\fg_{\alpha}$ is contained in a complexified Lie algebra of a compact subgroup of $G$.

There are two fundamental facts about the representation theory of $H$:
\begin{itemize}
    \item Every irreducible Casselman-Wallach representation of $H$ is finite-dimensional.
    \item For every $\Gamma \in \mathrm{Irr}(H)$ differential of $\mathrm{\Gamma}$ is a direct sum of one-dimensional representations attached to a unique $\mathrm{d\Gamma} \in \fh^*$.
\end{itemize}

For every Borel subalgebra $\fb$ of $\fg$ containing $\fh$, write
$$\xi_{\fb}: \fh \to {^{a}\fh},$$
for the linear isomorphism attached to $\fb$ defined by
$$\fh \hookrightarrow \fb \to \fb / [\fb , \fb] =  {^{a}\fh},$$
the transpose inverse of this map is still denoted by $\xi_{\fb}: {^{a}\fh^*} \to \fh^*$.

Write
\begin{equation}
    W(^{a}\fh^*,\fh^*) = \set{\xi_{\fb}: {^{a}\fh}^* \to \fh^*}{\textrm{$\fb$ is a Borel subalgebra containing $\fh$}},
\end{equation}
put
\begin{equation}
    \delta(\xi) := \frac{1}{2} \cdot \sum_{\textrm{$\alpha$ is an imaginary root in $\xi \Delta^{+}$}}\alpha - \sum_{\textrm{$\beta$ is a compact imaginary root in $\xi \Delta^{+}$}}\beta \in \fh^*.
\end{equation}

\begin{defn}\label{regular character}
    Write $\mathscr{P}_{\Lambda}(G)$ for the set of all triples $\upgamma = (H,\xi,\mathrm{\Gamma})$, where $H$ is a Cartan subgroup of $G$, $\xi \in W(^{a}\fh^*,\fh^*)$, and
    \defmap{\Gamma}{\Lambda}{\Irr(H)}{\nu}{\Gamma_{\nu}}
    is a map with the following properties:
    \begin{itemize}
        \item $\Gamma_{\nu+\beta} = \Gamma_{\nu} \otimes \xi(\beta)$ for all $\beta \in Q_{\iota}$ and $\nu \in \Lambda$;
        \item $\mathrm{d}\Gamma_{\nu} = \xi(\nu) + \delta(\xi)$ for all $\nu \in \Lambda$.
    \end{itemize}
    Here $\xi(\beta)$ is viewed as a character of $H$ via the homomorphism $\iota: H \to H_{\BC}$, $H_{\BC}$ is the unique Cartan subgroup of $G_{\BC}$ containing $\iota(H)$.
\end{defn}

The group $G$ acts on $\mathscr{P}_{\Lambda}(G)$ in the standard way, and we define the set of parameters for $\mathrm{Coh}_{\Lambda}(G)$ to be
\begin{equation}
    \CP_{\Lambda}(G) := G \backslash \mathscr{P}_{\Lambda}(G) .
\end{equation}

For each $\gamma \in \CP_{\Lambda}(G)$, represented by $\upgamma = (H,\xi,\Gamma)$, by \cite[Theorem 8.2.1]{Vog81}, we have two $\CK(G)$-valued coherent families $\Psi_{\gamma}$ and $\bar{\Psi}_{\gamma}$ on $\Lambda$ such that
$$\Psi_{\gamma}(\nu) = X(\Gamma_{\nu},\xi(\nu)) \ \textrm{and}  \ \bar{X}_{\gamma}(\nu) = \bar{X}(\Gamma_{\nu},\xi(\nu))$$
for all regular dominant element $\nu \in \Lambda$. Here $X(\Gamma_{\nu},\xi(\nu))$ is the standard representation defined in \cite[Notation Convention 6.6.3]{Vog81} and $\bar{X}_{\gamma}(\nu)$ is its unique irreducible subrepresentation (see \cite[Theorem 6.5.12]{Vog81}).

By Langlands classification, $\{ \bar{\Psi}_{\gamma} \}_{\gamma \in \CP_{\Lambda}(G)}$ is a basis of $\mathrm{Coh}_{\Lambda}(\CK(G))$ (\cite[Proposition 6.6.7]{Vog81}), and view $\mathrm{Coh}_{\Lambda}(\CK(G))$ as a basel representation of $W_{\Lambda}$ with this basis  (basel representation means a representation with a fixed basis). The family $\{ \Psi_{\gamma} \}_{\gamma \in \CP_{\Lambda}(G)}$ is also a basis of $\mathrm{Coh}_{\Lambda}(\CK(G))$ (\cite[Theorem 6.5.12]{Vog81}).

We now demonstrate how to compute coherent continuation representations using the parameters defined above.
The cross action of $W_{\Lambda}$ on the set $\mathscr{P}_{\Lambda}(G)$ is defined by (\cite[Definition 4.2]{Vog82}):
\begin{equation}
    w \times (H, \xi, \Gamma) = (H, \xi \circ w^{-1}, (\nu \mapsto \Gamma_{\nu} \otimes (\xi \circ w^{-1}(\nu) + \delta(\xi \circ w^{-1}) - \xi (\nu) - \delta(\xi)))).
\end{equation}

This commutes with the action of $G$ and thus descends to an action on $\CP_{\Lambda}(G)$:
\begin{equation}
    W_{\Lambda} \times \CP_{\Lambda}(G) \to \CP_{\Lambda}(G), \ (w,\gamma) \mapsto w \times \gamma.
\end{equation}

Since $G$ is in Harish-Chandra's class, the real Weyl group
\begin{equation}
    W_{H} := N_{G}(H)/H
\end{equation}
is identified with a subgroup of the complex Weyl group $W_{\fh}$ of $\fg$ with respect to $\fh$.

We have an inclusion:
\begin{equation}
    W(\Delta_{\fh,\mathrm{im}}) \hookrightarrow  W_{\fh,\ft}
\end{equation}
where $W(\Delta_{\mathrm{\fh,im}})$ is the Weyl group for the imaginary root system $\Delta_{\fh,\mathrm{im}}$. There is a group action of $W_{\fh,\ft}$ on the set of positive systems of $\Delta_{\fh,\mathrm{im}}$, with the subgroup $W(\Delta_{\fh,\mathrm{im}})$ acts simply transitively. It is easy to verify that for any choice of positive systems $\Delta_{\fh,\mathrm{im}}^{+}$ there is a natural decomposition:
\begin{equation}
    W_{\fh,\ft} = W_{\fh,\ft,\Delta_{\fh,\mathrm{im}}^{+}} \ltimes W(\Delta_{\fh,\mathrm{im}}),
\end{equation}
where $W_{\fh,\ft,\Delta_{\fh,\mathrm{im}}^{+}}$ is the stabilizer of $\Delta_{\fh,\mathrm{im}}^{+}$ in $W_{\fh,\ft}$. For different choice of $\Delta_{\fh,\mathrm{im}}^{+}$, the subgroups $W_{\fh,\ft,\Delta_{\fh,\mathrm{im}}^{+}}$ conjugate to each other. So there is a unique quartic character
$$\mathrm{sgn_{im}}: W_{\fh,\ft} \to \BC^{\times}$$
such that
\begin{itemize}
    \item its restriction to $W(\Delta_{\fh,\mathrm{im}})$ equals the sign character;
    \item its restriction to $W_{\fh,\ft,\Delta_{\fh,\mathrm{im}}^{+}}$ is trivial for some (and hence all) positive system $\Delta_{\fh,\mathrm{im}}^{+}$ of $\Delta_{\fh,\mathrm{im}}$.
\end{itemize}

Fix a $\gamma \in \CP_{\Lambda}(G)$, and choose a representative $(H,\xi,\Gamma) \in \mathscr{P}_{\Lambda}(G)$ for it. Denote by $W_{\gamma} \subseteq W_{\Lambda}$ the stabilizer of $\gamma$ under the cross action, and by $W_{\fh,\ft}$ the stabilizer of $\ft$ in $W_{\fh}$, then there are inclusions
$$\xi \circ W_{\gamma} \circ \xi^{-1} \subseteq W_{H} \hookrightarrow W_{\fh,\ft} \hookrightarrow W_{\fh}.$$

Therefore, we obtain a quartic character on $W_{\gamma}$:
\defmap{\mathrm{sgn_{\gamma}}}{W_{\gamma}}{\BC^{\times}}{w}{\mathrm{sgn_{im}}(\xi \circ w \circ \xi^{-1})}
This character is independent of the choice of the representative $(H,\xi,\Gamma)$.

The coherent continuation representation can be computed using the basis of standard modules $\{ \Psi_{\gamma} \}_{\gamma \in \CP_{\Lambda}(G)}$ (see \cite[Section14]{Vog82}). The following result is due to Barbasch-Vogan, in a suitably modified form from \cite[Proposition 2.4]{BV82}.

\begin{thm}\label{Coh} 
As a representation of $W_{\Lambda}$,
    \begin{equation}
        \mathrm{Coh}_{\Lambda}(\CK(G)) \cong \bigoplus_{\gamma} \mathrm{Ind}_{W_{\gamma}}^{W_{\Lambda}} \sgn_{\gamma}
    \end{equation}
    where $\gamma$ runs over a representative set of the $W_{\Lambda}$-orbits in $\CP_{\Lambda}(G)$ under the cross action.
\end{thm}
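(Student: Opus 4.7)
The plan is to compute the $W_{\Lambda}$-action on the basis of standard-module coherent families $\{\Psi_{\gamma}\}_{\gamma \in \CP_{\Lambda}(G)}$, show that it permutes this basis up to an explicit sign character, and then assemble the resulting permutation-with-sign representation into a direct sum of induced characters indexed by cross-action orbits.

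First, I would establish the key identity: for every $\gamma \in \CP_{\Lambda}(G)$ represented by $(H,\xi,\Gamma)$ and every $w \in W_{\Lambda}$,
\begin{equation*}
w \cdot \Psi_{\gamma} \;=\; \sgn_{\mathrm{im}}(\xi \circ w \circ \xi^{-1}) \cdot \Psi_{w \times \gamma}.
\end{equation*}
Unpacking both sides using the description of the standard module $X(\Gamma_{\nu},\xi(\nu))$ and the definition of the cross action, the two sides differ only in how one accounts for the shift $\delta(\xi)$ when the Borel (encoded by $\xi$) is moved by $w$. Tracking this shift as a sum over imaginary roots of $\xi \Delta^+$ that change sign under $w$ isolates $\sgn_{\mathrm{im}}$ as the relevant discrepancy; the real and complex parts of $W_{H}$ genuinely permute character data and contribute no sign. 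This step essentially carries out the analysis of \cite[Section 14]{Vog82} in our notation.

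Next I would verify that $\sgn_{\gamma}$ is well-defined on $W_{\gamma}$: a change of representative $(H,\xi,\Gamma)$ conjugates $\xi \circ w \circ \xi^{-1}$ inside $W_{\fh,\ft}$, and $\sgn_{\mathrm{im}}$ is a class function on $W_{\fh,\ft}$. Combined with the identity above, this shows that for $w \in W_{\gamma}$ one has $w \cdot \Psi_{\gamma} = \sgn_{\gamma}(w)\, \Psi_{\gamma}$, so the line $\BC \cdot \Psi_{\gamma}$ realizes the character $\sgn_{\gamma}$ of $W_{\gamma}$.

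Finally, I would partition $\CP_{\Lambda}(G)$ into $W_{\Lambda}$-orbits under the cross action. For each orbit $\CO$ with a fixed representative $\gamma_{0}$, the subspace $V_{\CO} := \bigoplus_{\gamma \in \CO} \BC \Psi_{\gamma}$ is $W_{\Lambda}$-stable, and the map $w \otimes 1 \mapsto w \cdot \Psi_{\gamma_{0}}$ extends to a $W_{\Lambda}$-equivariant isomorphism $\Ind_{W_{\gamma_{0}}}^{W_{\Lambda}} \sgn_{\gamma_{0}} \xrightarrow{\sim} V_{\CO}$ by Frobenius reciprocity, with well-definedness on the induced space guaranteed by the computation in the previous paragraph. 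Summing over orbits yields the claimed decomposition. The main obstacle is the first step: pinning down the sign character exactly requires careful bookkeeping of how $\delta(\xi)$ transforms under the cross action and confirming that only the imaginary-root contributions fail to cancel, so that the naive sign from $W_{\fh,\ft}$ collapses precisely to the quartic character $\sgn_{\mathrm{im}}$.
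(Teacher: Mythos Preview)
The paper does not supply its own proof of this theorem: it is stated with attribution to Barbasch--Vogan \cite[Proposition 2.4]{BV82}, and the sentence preceding the theorem points to \cite[Section~14]{Vog82} for the underlying computation of the $W_{\Lambda}$-action on the standard basis. So there is nothing in the paper itself to compare your argument against; your outline is essentially the argument one extracts from those references.

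That said, the ``key identity'' you state,
\[
w \cdot \Psi_{\gamma} \;=\; \sgn_{\mathrm{im}}(\xi \circ w \circ \xi^{-1}) \cdot \Psi_{w \times \gamma}
\quad\text{for all } w \in W_{\Lambda},
\]
is not correct as written, and fixing it is exactly the content of the cited references. Two issues: (i) for a general $w \in W_{\Lambda}$ the element $\xi \circ w \circ \xi^{-1}$ need not lie in $W_{\fh,\ft}$, so $\sgn_{\mathrm{im}}$ is not even defined there; and (ii) more seriously, the $W_{\Lambda}$-action on the standard basis is \emph{not} a pure signed permutation. Simple reflections through noncompact imaginary roots contribute extra Cayley-transform terms (this is precisely what \cite[Definition~14.4]{Vog82} records, and it is used later in the paper in the proof of Proposition~\ref{Coh cong}). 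What is true is that the action is upper-triangular with respect to a suitable partial order on $\CP_{\Lambda}(G)$, with diagonal entries given by the signed cross action you describe; hence the \emph{character} of $\mathrm{Coh}_{\Lambda}(\CK(G))$ coincides with that of the signed permutation representation, and the isomorphism with $\bigoplus_{\gamma} \Ind_{W_{\gamma}}^{W_{\Lambda}} \sgn_{\gamma}$ follows. Your second and third steps are fine once the first step is rephrased as a character computation (or an upper-triangularity statement) rather than an exact identity on basis vectors.
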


\subsection{Representation theory of Weyl groups}\label{2.3}
Let $H$ be a finite group with a linear action on a complex vector space $V$, $H_1 \subseteq H$ be a subgroup, denote by $V^{H_1}$ the subspace of $V$ defined by
$$V^{H_1} := \set{v \in V}{hv = v, \textrm{for all $h \in H_1$}}$$
it is an $H_1$-submodule of $V$, so there is a decomposition $V = V^{H_1} \oplus V_1$, where $V_1$ is an $H_1$-submodule which has no nonzero $H_1$-invariants.

\begin{thm}[Macdonald, Lusztig, and Spaltenstein]\label{j-ind}
    Denote by $\mathrm{S}^e(V_1)$ the $e$-th symmetric power of $V_1$. Suppose $\sigma_1 \in \Irr(H_1)$ is an irreducible representation that occurs with multiplicity $1$ in $\mathrm{S}^e(V_1)$ and does not occur in $\mathrm{S}^{i}(V_1)$ if $0 \leq i \leq e-1$. We may regard $\sigma_1$ as a subspace of $\mathrm{S}^e(V)$ via the inclusion $\sigma_1 \subseteq \BC \otimes \mathrm{S}^e(V_1) \subseteq \mathrm{S}^e(V)$, and consider the $H$-representation $\sigma$ of $\mathrm{S}^e(V)$ generated by $\sigma'$. Then
    \begin{enumerate}
        \item $\sigma$ is an irreducible $H$-representation;
        \item $\sigma$ occurs with multiplicity $1$ in $\mathrm{S}^e(V)$;
        \item $\sigma$ does not occur in $\mathrm{S}^{i}(V)$ if $0 \leq i \leq e-1$.
    \end{enumerate}
\end{thm}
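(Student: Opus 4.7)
The plan is to exploit the $H_1$-decomposition
$$S^e(V)\big|_{H_1} \;=\; \bigoplus_{i+j=e} S^i(V^{H_1}) \otimes S^j(V_1),$$
in which each factor $S^i(V^{H_1})$ is $H_1$-trivial (since $H_1$ acts trivially on $V^{H_1}$), and to track the multiplicity of $\sigma_1$ through this decomposition. First I would verify the following auxiliary fact: for every $k\le e$, the multiplicity of $\sigma_1$ in $S^k(V)\big|_{H_1}$ equals $\sum_{i+j=k}(\dim S^i(V^{H_1}))\cdot[S^j(V_1):\sigma_1]$. By the hypothesis on $\sigma_1$, every term with $j<e$ vanishes, so this multiplicity is $0$ for $k<e$ and is exactly $1$ for $k=e$, with the unique copy sitting in $S^0(V^{H_1})\otimes S^e(V_1)=\BC\otimes S^e(V_1)$. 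This is precisely the copy used to embed $\sigma_1$ into $S^e(V)$, which makes the definition of $\sigma$ unambiguous and sets up everything else.

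With this in hand, the irreducibility of $\sigma$ (claim (1)) falls out of a short Schur argument, and this is the one genuine subtlety in the proof. Suppose $U\subseteq\sigma$ is a proper $H$-submodule; by complete reducibility over $\BC$, write $\sigma=U\oplus U'$ with both $U,U'\subseteq S^e(V)$. The projections $\pi\colon\sigma\to U$, $\pi'\colon\sigma\to U'$ are $H_1$-equivariant, hence restrict to $H_1$-maps from the irreducible module $\sigma_1$; by Schur each is either zero or injective. If both were injective, $\sigma_1$ would embed twice into $S^e(V)\big|_{H_1}$, contradicting the multiplicity-one statement proved above. Hence $\sigma_1$ lies entirely in $U$ or entirely in $U'$; since $\sigma$ is generated as an $H$-module by $\sigma_1$, this forces $\sigma=U$ or $\sigma=U'$, contradicting properness. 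Therefore $\sigma$ is irreducible.

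For claims (2) and (3) the same multiplicity bookkeeping works. If $\sigma$ occurs in $S^i(V)$ with multiplicity $m\ge1$, restricting to $H_1$ shows that $\sigma_1$ occurs in $S^i(V)\big|_{H_1}$ with multiplicity at least $m\cdot[\sigma|_{H_1}:\sigma_1]\ge m$, using that $\sigma_1\hookrightarrow\sigma$ via the construction. For $i<e$ the right-hand side is $0$ by Step 1, yielding $m=0$ and proving (3); for $i=e$ it is $1$, yielding $m\le 1$, while the given embedding $\sigma\hookrightarrow S^e(V)$ gives $m\ge1$, so $m=1$ and (2) holds. The main obstacle is really Step 2: the argument hinges crucially on the \emph{multiplicity-one} hypothesis for $\sigma_1$ in $S^e(V_1)$, and without it the projection argument would not force $\sigma_1$ into a single summand of $\sigma$.
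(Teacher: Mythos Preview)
The paper does not supply a proof of this theorem at all; it simply cites \cite[Theorem 11.2.1]{Car}. Your argument is correct and is, in fact, essentially the classical proof one finds in Carter: exploit the $H_1$-module decomposition $S^k(V)\cong\bigoplus_{i+j=k}S^i(V^{H_1})\otimes S^j(V_1)$ to see that $\sigma_1$ appears in $S^k(V)|_{H_1}$ with multiplicity $0$ for $k<e$ and $1$ for $k=e$, and then derive all three assertions from this. Your Schur/projection argument for irreducibility and the multiplicity bookkeeping for (2) and (3) are both sound. One tiny cosmetic point: in the irreducibility step you implicitly want $U$ to be a \emph{nonzero} proper submodule so that $U'$ is likewise proper and nonzero; with that understood, the contradiction is exactly as you describe.
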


\begin{proof}
    See \cite[Theorem 11.2.1]{Car}.
\end{proof}

Let $\Irr(H_1,e)$ denote the subset of all $\sigma \in \Irr(H_1)$ such that $\sigma$ occurs with multiplicity $1$ in $\mathrm{S}^e(V')$ and does not occur in $\mathrm{S}^{i}(V')$ if $0 \leq i \leq e-1$.  Then by the theorem above, we have a map called $j$-induction
\defmap{j_{H_1}^H}{\Irr(H_1,e)}{\Irr(H,e)}{\sigma}{j_{H_1}^{H}(\sigma)}
where $j_{H_1}^{H}(\sigma)$ is the subrepresentation of $\mathrm{S}^e(V)$ generated by $\sigma$.

The following are some basic facts about $j$-induction.

\begin{prop}\label{2.5}
    Let $H_2 \subseteq H_1 \subseteq H$ be subgroups. Suppose that
    \begin{equation}
        \begin{split}
            V = V_1 \oplus V^{H_1}, \ & \textrm{where $V_1$ is an $H_1$-submodule}; \\
            V_1 = V_2 \oplus V_1^{H_2}, \ & \textrm{where $V_2$ is an $H_2$-submodule}.
        \end{split}
    \end{equation}
    Then $V = V_2 \oplus V^{H_2}$, and we have
    \begin{equation}
        j_{H_2}^{H} = j_{H_1}^{H} \circ j_{H_2}^{H_1}.
    \end{equation}
\end{prop}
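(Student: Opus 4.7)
The plan is to first verify the claimed decomposition $V = V_2 \oplus V^{H_2}$ by combining the two given decompositions, and then to establish the transitivity of $j$-induction by showing that both sides of the identity produce the same $H$-subrepresentation of $\mathrm{S}^e(V)$, namely the one generated by the original representation $\sigma$.

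First I would check the decomposition. Combining $V = V_1 \oplus V^{H_1}$ with $V_1 = V_2 \oplus V_1^{H_2}$ yields the finer decomposition $V = V_2 \oplus V_1^{H_2} \oplus V^{H_1}$. Since $H_2 \subseteq H_1$, we have $V^{H_1} \subseteq V^{H_2}$, and $V_1^{H_2} \subseteq V^{H_2}$ by construction, so both of the last two summands lie in $V^{H_2}$. Conversely, by the hypothesis that $V_2$ has no nonzero $H_2$-fixed vectors, $V_2 \cap V^{H_2} = V_2^{H_2} = 0$. Taking $H_2$-invariants of the finer decomposition therefore gives $V^{H_2} = V_1^{H_2} \oplus V^{H_1}$, and hence $V = V_2 \oplus V^{H_2}$ as claimed.

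Next, fix $\sigma \in \Irr(H_2, e)$ and set $\tau := j_{H_2}^{H_1}(\sigma)$, the $H_1$-subrepresentation of $\mathrm{S}^e(V_1)$ generated by $\sigma$. Applying Theorem \ref{j-ind} to the triple $(H_1, H_2, V_1)$ in place of $(H, H_1, V)$, whose hypotheses are exactly the data $V_1 = V_2 \oplus V_1^{H_2}$ and $\sigma \in \Irr(H_2, e)$, we conclude that $\tau$ is an irreducible $H_1$-representation, occurs with multiplicity one in $\mathrm{S}^e(V_1)$, and does not occur in any $\mathrm{S}^i(V_1)$ for $i < e$. Thus $\tau \in \Irr(H_1, e)$, and $j_{H_1}^H(\tau)$ is defined as the $H$-subrepresentation of $\mathrm{S}^e(V)$ generated by $\tau$.

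To finish, note that $\tau = \sum_{h \in H_1} h \cdot \sigma$ by construction, so the $H$-translates of $\tau$ and the $H$-translates of $\sigma$ span the same subspace of $\mathrm{S}^e(V)$; that is, the $H$-subrepresentation generated by $\tau$ coincides with the $H$-subrepresentation generated by $\sigma$. The latter is, by definition, $j_{H_2}^H(\sigma)$, giving $j_{H_1}^H \circ j_{H_2}^{H_1}(\sigma) = j_{H_2}^H(\sigma)$. The main subtlety — and the step requiring the most care — is bookkeeping the ambient spaces so that Theorem \ref{j-ind} is legitimately applied at the intermediate level $H_2 \subseteq H_1$ with ambient module $V_1$; once that is set up correctly, the rest is a direct unravelling of the definitions.
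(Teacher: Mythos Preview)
Your argument is correct. The paper does not supply its own proof here; it simply cites \cite[Proposition 11.2.4]{Car}. Your write-up is essentially the standard argument behind that reference: verify $V^{H_2} = V_1^{H_2} \oplus V^{H_1}$ by taking $H_2$-invariants of the finer decomposition and using $V_2^{H_2}=0$, then observe that the $H$-span of $\tau = j_{H_2}^{H_1}(\sigma)$ inside $\mathrm{S}^e(V)$ coincides with the $H$-span of $\sigma$ because $H_1 \subseteq H$. The one point worth making explicit is that the embeddings $\mathrm{S}^e(V_2) \hookrightarrow \mathrm{S}^e(V_1) \hookrightarrow \mathrm{S}^e(V)$ are compatible, so $\sigma$ sits in the same subspace of $\mathrm{S}^e(V)$ whether one passes through $V_1$ or goes directly; you implicitly use this when identifying the two $H$-spans, and it follows immediately from $V_2 \subseteq V_1 \subseteq V$ as direct summands.
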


\begin{proof}
    See \cite[Proposition 11.2.4]{Car}.
\end{proof}

\begin{prop}\label{2.6}
    Let $H$ and $H'$ be finite groups, with linear actions on complex vector spaces $V$, $V'$ respectively, $H_1 \subseteq H$ and $H'_1 \subseteq H'$ be subgroups. Let $\sigma_1 \in \Irr(H_1,e_1)$ and $\sigma'_1 \in \Irr(H'_1,e'_1)$, then
    \begin{enumerate}
        \item $\sigma_1 \boxtimes \sigma'_1 \in \Irr(H_1 \times H'_1,e_1 + e'_1)$;
        \item we have the following equation 
            \begin{equation}
            j_{H_1 \times H'_1}^{H \times H'}(\sigma_1 \boxtimes \sigma'_1) = j_{H_1}^{H}\sigma_1 \boxtimes j_{H'_1}^{H'}\sigma'_1,
            \end{equation}
            where $j_{H_1 \times H'_1}^{H \times H'}$ is defined by the representation $V \oplus V'$ of $H \times H'$.
    \end{enumerate}
\end{prop}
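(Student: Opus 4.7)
The plan is to exploit the natural $(H \times H')$-equivariant isomorphism of graded algebras $\mathrm{S}(V \oplus V') \cong \mathrm{S}(V) \otimes \mathrm{S}(V')$, which in each degree $n$ gives
\[
\mathrm{S}^{n}(V \oplus V') = \bigoplus_{i+j=n} \mathrm{S}^{i}(V) \otimes \mathrm{S}^{j}(V').
\]
Since taking fixed points commutes with direct sums, $(V \oplus V')^{H_1 \times H'_1} = V^{H_1} \oplus (V')^{H'_1}$, so its $(H_1 \times H'_1)$-stable complement in $V \oplus V'$ is $V_1 \oplus V'_1$, and the same degreewise decomposition therefore holds for $\mathrm{S}^{n}(V_1 \oplus V'_1)$.

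For (1), I compute the multiplicity of $\sigma_1 \boxtimes \sigma'_1$ in $\mathrm{S}^{n}(V_1 \oplus V'_1)$ summand by summand. Since $\sigma_1 \boxtimes \sigma'_1$ is an outer tensor product of irreducibles of $H_1$ and $H'_1$, its multiplicity in $\mathrm{S}^{i}(V_1) \otimes \mathrm{S}^{j}(V'_1)$ factors as the product of the multiplicity of $\sigma_1$ in $\mathrm{S}^{i}(V_1)$ and that of $\sigma'_1$ in $\mathrm{S}^{j}(V'_1)$. For $n < e_1 + e'_1$, every pair $(i,j)$ with $i+j=n$ satisfies $i < e_1$ or $j < e'_1$, so the hypotheses $\sigma_1 \in \Irr(H_1, e_1)$ and $\sigma'_1 \in \Irr(H'_1, e'_1)$ make every summand contribute $0$; for $n = e_1 + e'_1$, only the pair $(e_1, e'_1)$ can contribute, with multiplicity $1 \cdot 1 = 1$. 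This establishes $\sigma_1 \boxtimes \sigma'_1 \in \Irr(H_1 \times H'_1, e_1 + e'_1)$.

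For (2), I view $\sigma_1 \boxtimes \sigma'_1$ inside the summand $\mathrm{S}^{e_1}(V) \otimes \mathrm{S}^{e'_1}(V')$ of $\mathrm{S}^{e_1 + e'_1}(V \oplus V')$. This summand is $(H \times H')$-stable, so the subrepresentation $j_{H_1 \times H'_1}^{H \times H'}(\sigma_1 \boxtimes \sigma'_1)$ that it generates lies entirely inside it. A direct check shows that in any external tensor product $U \otimes U'$ of an $H$-module $U$ and an $H'$-module $U'$, the $(H \times H')$-subrepresentation generated by a subspace $\sigma \otimes \sigma'$ coincides with $\langle H \cdot \sigma \rangle \otimes \langle H' \cdot \sigma' \rangle$. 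Applying this with $U = \mathrm{S}^{e_1}(V)$ and $U' = \mathrm{S}^{e'_1}(V')$, and then unpacking the definitions of $j_{H_1}^{H}$ and $j_{H'_1}^{H'}$, yields the desired identity. I do not anticipate a serious obstacle: the result is essentially a formal consequence of the compatibility of $\mathrm{S}^{\bullet}$ with direct sums, and the only care needed is to track which subspace is being generated at each stage rather than working only up to abstract isomorphism.
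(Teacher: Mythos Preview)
Your proposal is correct and follows essentially the same approach as the paper: both use the graded decomposition $\mathrm{S}^{n}(V_1 \oplus V'_1) = \bigoplus_{i+j=n} \mathrm{S}^{i}(V_1) \otimes \mathrm{S}^{j}(V'_1)$ to locate the unique copy of $\sigma_1 \boxtimes \sigma'_1$ in degree $e_1 + e'_1$, and for part (2) the paper simply says ``direct verification'' where you have spelled out the tensor-product generation argument.
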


\begin{proof}
    This is well known, we provide a sketch of proof for completeness.
    Since
    $$\mathrm{S}^{n}(V_1 \oplus V'_1) = \bigoplus_{i = 0}^{n}\mathrm{S}^{i}(V_1) \otimes \mathrm{S}^{n-i}(V'_1),$$
    $\sigma_1 \boxtimes \sigma'_1 $ does not occur in $\mathrm{S}^{n}(V_1 \oplus V'_1)$ if $n \leq e_1 + e_2$, and in $\mathrm{S}^{e_1+e'_1}(V_1 \oplus V'_1)$,
    only $\mathrm{S}^{e_1}(V_1) \boxtimes \mathrm{S}^{e'_1}(V'_1)$ contains $\sigma_1 \boxtimes \sigma'_1$ as subrepresentation, the multiplicity is $1$ by assumption, so $\sigma_1 \boxtimes \sigma'_1 \in \Irr(H_1 \times H'_1, e_1+e'_1)$.
    The second statement can be checked via direct verification.
\end{proof}

Back to our setting of real reductive groups, let $W \subseteq \GL({^{a}\fh})$ denote the abstract Weyl group. For every $\sigma \in \Irr(W)$, its fake degree is defined as 
\begin{equation}
    a(\sigma) := \mathrm{min}\set{k \in \BN}{\textrm{$\sigma$ occurs in the k-th symmetric power $\mathrm{S}^{k}({^{a}\fh})$}}.
\end{equation}
This is well-defined since every $\sigma \in \Irr(W)$ occurs in the symmetric algebra $\mathrm{S}(^{a}\fh)$. The representation is called univalent if it occurs in $\mathrm{S}^{a(\sigma)}(^{a}\fh_{s})$ with multiplicity one, where $^{a}\fh_{s}:= \mathrm{Span}(\check{\Delta})$ denotes the span of the coroots.

Recall Lusztig's notion of a special representation of a Weyl group \cite{Lus79}. An irreducible representation of $W$ is called Springer if it corresponds to the trivial local system on a nilpotent orbit in $\fg^*$ via the Springer correspondence  \cite{Spr}. Note that every special irreducible representation is Springer, and the corresponding nilpotent orbit is called a special nilpotent orbit. Every Springer representation is univalent \cite{BM}.

We have a decomposition
$$^{a}\fh = (\Delta(\Lambda))^{\perp} \oplus \mathrm{Span}\left(\check{\Delta}(\Lambda)\right)$$
where
$$(\Delta(\Lambda))^{\perp} := \set{x \in {^{a}\fh}}{\textrm{$\langle x , \alpha \rangle = 0$ for all $\alpha \in \Delta(\Lambda)$}}.$$
Then $(\Delta(\Lambda))^{\perp} = {^{a}\fh}^{W(\Lambda)}$. For every univalent irreducible representation $\sigma_0$ of $W(\Lambda)$, we view it as a subrepresentation of $\mathrm{S}^{a(\sigma_0)}({^{a}\fh})$ via the inclusions
$$\sigma_0 = \BC \otimes \sigma_0 \subseteq \mathrm{S}^0\left((\Delta(\Lambda))^{\perp}\right)\otimes \mathrm{S}^{a(\sigma_0)}\left(\mathrm{Span}\left(\check{\Delta}(\Lambda)\right)\right) \subseteq \mathrm{S}^{a(\sigma_0)}\left({^{a}\fh}\right)$$
The $W$-subrepresentation of $\mathrm{S}^{a(\sigma_0)}({^{a}\fh})$ generated by $\sigma_0$ is just $j_{W(\Lambda)}^{W}(\sigma_0)$, it is irreducible and univalent by Theorem \ref{j-ind}, with the same fake degree as that of $\sigma_0$.

If $\sigma_0$ is special, then the $j$-induction $j_{W(\Lambda)}^{W}(\sigma_0)$ is Springer. Write
\begin{equation}
    \CO_{\sigma_0} \in \bar{\mathrm{Nil}}(\fg^*) \textrm{for the nilpotent orbit corresponding to $j_{W(\Lambda)}^{W}(\sigma_0)$},
\end{equation}

There is also an equivalence relation $\approx$ on $\Irr(W(\Lambda))$, which depends only on $W(\Lambda)$ as an abstract Coxeter group, for precise definition, see \cite[Section 3]{BMSZ}. 

An equivalence class of this equivalence relation is called a double cell in $\Irr(W(\Lambda))$. Note that this definition of double cell in $\Irr(W(\Lambda))$ coincides with that of Lusztig in \cite{Lus82}, and each double cell contains a unique special representation.

Denote by $\Irr^{\mathrm{sp}}(W(\Lambda))$ the set of special irreducible representations of $W(\Lambda)$.

\begin{defn}
    For an $\Ad(\fg)$-stable closed subset of $\fg^*$, define
    $$\Irr^{\mathrm{sp}}_{S}(W(\Lambda)) := \set{\sigma_0 \in \Irr^{\mathrm{sp}}(W(\Lambda))}{\CO_{\sigma_0} \subseteq S},$$
    and
    $$\Irr_{S}(W(\Lambda)) := \set{\sigma \in \Irr(W(\Lambda))}{\textrm{there is a $\sigma_0 \in \Irr^{\mathrm{sp}}_S(W(\Lambda))$ such that $\sigma \approx \sigma_0$}}.$$

    For a nilpotent orbit $\CO \in \bar{\Nil}(\fg^*)$, define
    $$\Irr^{\mathrm{sp}}(W(\Lambda);\CO) := \set{\sigma_0 \in \Irr^{\mathrm{sp}}(W(\Lambda))}{\CO_{\sigma_0} = \CO},$$
    and
    $$\Irr(W(\Lambda);\CO) := \set{\sigma \in \Irr(W(\Lambda))}{\textrm{there is a $\sigma_0 \in \Irr^{\mathrm{sp}}(W(\Lambda;\CO))$ such that $\sigma \approx \sigma_0$}}.$$
\end{defn}

It follows directly from the definition that
\begin{equation}
    \Irr_{S}^{\mathrm{sp}}(W(\Lambda)) = \bigsqcup_{\CO \subseteq S} \Irr^{\mathrm{sp}}(W(\Lambda);\CO),
\end{equation}
and 
\begin{equation}\label{(2.16)}
    \Irr_{S}(W(\Lambda)) = \bigsqcup_{\CO \subseteq S}\Irr(W(\Lambda);\CO).
\end{equation}

In the rest of this section, we mainly focus on the Weyl group of type $A$, in which case $W = \mathrm{S}_n$ is the symmetric group, it has a linear action on the $n$-dimensional complex vector space $^{a}\fh = \BC^n$. Then, all irreducible representations are special, and each double cell is a singleton (see \cite{Lus79} and \cite{Lus82}).

The following is a construction of irreducible representations of symmetric group $\mathrm{S}_n$, see \cite[Chapter 5.4]{GP} for a proof.

\begin{thm}
    There is a bijection between $\mathrm{YD}_n$ and $\Irr(\mathrm{S}_n)$:
    \defmap{\phi}{\mathrm{YD}_n}{\Irr(\mathrm{S}_n)}{\iota}{j_{\mathrm{S}_{\iota^t}}^{\mathrm{S}_n}\sgn_{\iota^t}}
    where $\iota^t$ is the transpose young diagram of $\iota$, $\mathrm{S}_{\iota^t} := \mathrm{S}_{\br_1(\iota^t)} \times \cdots \times \mathrm{S}_{\br_k(\iota^t)}$ ($k$ is the number of rows of $\iota^t$) viewed as a subgroup of $\mathrm{S}_n$, $\sgn_{\iota^t} := \sgn_{\br_1(\iota^t)} \times \cdots \times \sgn_{\br_{k}(\iota^t)}$ is the sign character of $\mathrm{S}_{\iota^t}$, and the $j$-induction is defined via the standard representation of $\mathrm{S}_n$ on $^{a}\fh$.
\end{thm}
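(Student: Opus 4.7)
The plan is to apply the $j$-induction theorem (Theorem \ref{j-ind}) to $\sgn_{\iota^t}$, identify the result with the Specht module $S^{\iota}$ via Young's rule, and then conclude bijectivity by a cardinality argument. First I would check that $\sgn_{\iota^t}$ satisfies the hypothesis of Theorem \ref{j-ind}: as an $\mathrm{S}_{\iota^t}$-module, $^{a}\fh = \BC^n$ splits as $\bigoplus_{i=1}^{k} \BC^{\br_i(\iota^t)}$, with $\mathrm{S}_{\iota^t}$-invariants being the block-constant vectors; the complementary summand is $V_1 = \bigoplus_i V_i$, where $V_i$ is the reflection representation of $\mathrm{S}_{\br_i(\iota^t)}$. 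Because $\mathrm{S}^{\bullet}(V_1) \cong \bigotimes_i \mathrm{S}^{\bullet}(V_i)$ and $\sgn_{\iota^t}$ is a pure tensor, the multiplicity question reduces to the single-factor case: the sign character of $\mathrm{S}_m$ occurs with multiplicity one in $\mathrm{S}^{\binom{m}{2}}(V_m)$ (generated by the Vandermonde determinant in coordinate differences) and not in any lower degree (by the Chevalley--Shephard--Todd description of the coinvariant algebra). Thus $\sgn_{\iota^t} \in \Irr(\mathrm{S}_{\iota^t}, n(\iota))$, where $n(\iota) := \sum_i \binom{\br_i(\iota^t)}{2} = \sum_j \binom{\bc_j(\iota)}{2}$, and Theorem \ref{j-ind} then yields an irreducible $\phi(\iota)$ of fake degree exactly $n(\iota)$.

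The heart of the argument is to show $\phi(\iota) = S^{\iota}$. By construction, $\phi(\iota)$ is generated inside $\mathrm{S}^{n(\iota)}(\BC^n)$ by an embedded copy of $\sgn_{\iota^t}$, so Frobenius reciprocity realizes it as an irreducible quotient of $\Ind_{\mathrm{S}_{\iota^t}}^{\mathrm{S}_n} \sgn_{\iota^t}$. Twisting Young's rule by the sign character of $\mathrm{S}_n$ gives
\[
\Ind_{\mathrm{S}_{\iota^t}}^{\mathrm{S}_n} \sgn_{\iota^t} = \bigoplus_{\nu \,\trianglelefteq\, \iota} K_{\nu^t,\,\iota^t}\, S^{\nu},
\]
with $S^{\iota}$ appearing with multiplicity one. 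A short check using $n(\mu) = \sum_i (i-1)\mu_i$ shows that each elementary raising move in dominance (moving one box to a strictly earlier row) strictly decreases $n$, so $\nu \triangleleft \iota$ strictly forces $n(\nu) > n(\iota)$. Combined with the classical fact that the fake degree of $S^{\nu}$ equals $n(\nu)$, this pins down $S^{\iota}$ as the unique constituent of the induced module with fake degree $n(\iota)$, whence $\phi(\iota) = S^{\iota}$.

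Since $\iota \mapsto S^{\iota}$ is the classical bijection between $\mathrm{YD}_n$ and $\Irr(\mathrm{S}_n)$ (both sides having cardinality equal to the partition number $p(n)$, as $\mathrm{S}_n$-conjugacy classes are indexed by cycle type), the map $\phi$ is a bijection. The main obstacle is the identification step $\phi(\iota) = S^{\iota}$: this is the only part that requires genuinely combinatorial input, through Young's rule and the dominance/fake-degree comparison. The verification of the hypothesis of Theorem \ref{j-ind} reduces to the Vandermonde/Chevalley--Shephard--Todd computation recalled above, and the final bijectivity is pure counting.
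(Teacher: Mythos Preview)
The paper does not prove this theorem at all; it simply cites \cite[Chapter 5.4]{GP}. Your proposal goes considerably further and supplies an actual argument, and the argument is correct. You verify the Macdonald--Lusztig--Spaltenstein hypothesis for $\sgn_{\iota^t}$ via the Vandermonde/coinvariant computation, obtaining $\phi(\iota)$ irreducible of fake degree $n(\iota)=\sum_j\binom{\bc_j(\iota)}{2}=\sum_i(i-1)\br_i(\iota)$; then you pin down $\phi(\iota)=S^\iota$ by combining sign-twisted Young's rule with the observation that $\nu\mapsto n(\nu)$ is strictly decreasing along dominance-raising moves, so $S^\iota$ is the unique constituent of $\Ind_{\mathrm{S}_{\iota^t}}^{\mathrm{S}_n}\sgn_{\iota^t}$ of fake degree $n(\iota)$. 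This is essentially the proof one finds in the cited reference, so in spirit you agree with the paper; you have just unpacked what it chose to black-box.

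One cosmetic remark: the dominance order $\trianglelefteq$, the Specht modules $S^\iota$, and the Kostka numbers $K_{\nu^t,\iota^t}$ are not introduced anywhere in the paper, so if this argument were actually inserted you would either need to add those definitions or keep the bare citation as the paper does.
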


When we identify nilpotent orbits with partitions of $n$, the bijection $\phi$ coincides with the Springer correspondence between nilpotent orbits and irreducible representations of Weyl groups.

Using the bijection $\phi$, we can describe the $j$-induction of symmetric groups specifically.

\begin{prop}\label{2.9}
    Let $\iota = [d_1,d_2,\cdots,d_r]$ be a partition of $n$, and for each $i \in \{1,\cdots,r\}$, let $\iota_i \in \mathrm{YD}_{d_i}$, then
    \begin{equation}
        j_{\mathrm{S}_1 \times \cdots \mathrm{S}_{d_r}}^{\mathrm{S}_n}\phi(\iota_1) \boxtimes \cdots \boxtimes \phi(\iota_r) = \phi\left(\iota_1 \mathop{\sqcup}\limits^r \cdots \mathop{\sqcup}\limits^r \iota_r\right).
    \end{equation}
\end{prop}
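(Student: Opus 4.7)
The plan is to reduce the statement to the two general propositions on $j$-induction proved earlier (Propositions \ref{2.5} and \ref{2.6}), supplemented by an elementary combinatorial identity and the invariance of $j$-induction under conjugation of the subgroup.

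First, I would expand each $\phi(\iota_i) = j_{\mathrm{S}_{\iota_i^t}}^{\mathrm{S}_{d_i}} \sgn_{\iota_i^t}$ and apply Proposition \ref{2.6} iteratively to the external tensor product, obtaining
\[
\phi(\iota_1) \boxtimes \cdots \boxtimes \phi(\iota_r) = j_{\mathrm{S}_{\iota_1^t} \times \cdots \times \mathrm{S}_{\iota_r^t}}^{\mathrm{S}_{d_1} \times \cdots \times \mathrm{S}_{d_r}}\bigl(\sgn_{\iota_1^t} \boxtimes \cdots \boxtimes \sgn_{\iota_r^t}\bigr).
\]
Next I would apply Proposition \ref{2.5} to the tower $\mathrm{S}_{\iota_1^t} \times \cdots \times \mathrm{S}_{\iota_r^t} \subseteq \mathrm{S}_{d_1} \times \cdots \times \mathrm{S}_{d_r} \subseteq \mathrm{S}_n$ acting on the standard representation $\BC^n = \BC^{d_1} \oplus \cdots \oplus \BC^{d_r}$. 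The hypothesis is straightforward because the invariants of any Young subgroup on $\BC^n$ consist of vectors constant on each block, so the decompositions $V = V_1 \oplus V^{H_1}$ and $V_1 = V_2 \oplus V_1^{H_2}$ required by Proposition \ref{2.5} can be read off explicitly. This identifies the left-hand side of the claimed identity with $j_{\mathrm{S}_{\iota_1^t} \times \cdots \times \mathrm{S}_{\iota_r^t}}^{\mathrm{S}_n}(\sgn_{\iota_1^t} \boxtimes \cdots \boxtimes \sgn_{\iota_r^t})$.

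It remains to match this with the right-hand side $\phi(\iota_1 \mathop{\sqcup}\limits^r \cdots \mathop{\sqcup}\limits^r \iota_r) = j_{\mathrm{S}_{(\iota_1 \sqcup^r \cdots \sqcup^r \iota_r)^t}}^{\mathrm{S}_n} \sgn$. The bridge is the combinatorial identity that the multiset of parts of $(\iota_1 \mathop{\sqcup}\limits^r \cdots \mathop{\sqcup}\limits^r \iota_r)^t$ equals the multiset union of the parts of $\iota_1^t, \ldots, \iota_r^t$. To prove this I would observe that for any Young diagram $\mu$ the number of columns of length at least $\ell$ is $\br_\ell(\mu)$, and by definition $\br_\ell$ is additive over $\mathop{\sqcup}\limits^r$; differencing in $\ell$ then yields additivity of the number of columns of each exact length. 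Consequently the two Young subgroups $\mathrm{S}_{\iota_1^t} \times \cdots \times \mathrm{S}_{\iota_r^t}$ and $\mathrm{S}_{(\iota_1 \sqcup^r \cdots \sqcup^r \iota_r)^t}$ are conjugate inside $\mathrm{S}_n$. Conjugation sends the sign character to itself and carries the decomposition $V = V_1 \oplus V^{H_1}$ to the corresponding one for the conjugate, so the two $j$-inductions produce the same $\mathrm{S}_n$-subrepresentation of $\mathrm{S}^e(\BC^n)$ up to isomorphism, completing the identification.

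The main obstacle I anticipate is the bookkeeping in passing from an ordered product of Young subgroups (which records an ordered composition of $n$) to a single Young subgroup (which records the corresponding partition): neither the combinatorial identity nor the conjugation-invariance of $j$-induction is deep, but writing both down cleanly enough to constitute a self-contained argument, and in particular verifying the tower hypothesis of Proposition \ref{2.5} without appealing to an ambient general theorem, takes a little care.
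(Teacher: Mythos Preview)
Your proposal is correct and follows essentially the same route as the paper: expand each $\phi(\iota_i)$ via its defining $j$-induction, apply Proposition~\ref{2.6} to collapse the external tensor, then apply Proposition~\ref{2.5} to the tower to reach $j^{\mathrm{S}_n}_{\mathrm{S}_{\iota_1^t}\times\cdots\times\mathrm{S}_{\iota_r^t}}(\sgn\boxtimes\cdots\boxtimes\sgn)$. The only difference is that the paper's proof disposes of the final identification with $\phi(\iota_1\mathop{\sqcup}\limits^r\cdots\mathop{\sqcup}\limits^r\iota_r)$ in one line as ``the definition of $\phi$'', whereas you spell out the underlying combinatorial fact (that the multiset of column lengths is additive under $\mathop{\sqcup}\limits^r$) and the conjugation-invariance of $j$-induction; your more explicit treatment is a genuine improvement in rigor over the paper's sketch.
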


\begin{proof}
    We indicate the ideal of the proof for the convenience of the reader.
    According to Proposition \ref{2.6}, the $j$-induction compatible with direct product, we have
    \begin{align}
        \phi(\iota_1) \boxtimes \cdots \boxtimes \phi(\iota_r) & = j^{\mathrm{S}_{d_1}}_{\mathrm{S}_{\iota_{1}^t}}\sgn_{\iota_1^t} \boxtimes \cdots \boxtimes j^{\mathrm{S}_{d_r}}_{\mathrm{S}_{\iota_{r}^t}}\sgn_{\iota_r^t}\\
        & = j_{\mathrm{S}_{\iota_{1}^t} \times \cdots \times \mathrm{S}_{\iota_{r}^t}}^{\mathrm{S}_{d_1} \times \cdots \times \mathrm{S}_{d_r}}\sgn_{\iota_{1}^t} \boxtimes \cdots \boxtimes \sgn_{\iota_{r}^t}.
    \end{align}
    Then use Proposition \ref{2.5}
    \begin{align}
         j_{\mathrm{S}_1 \times \cdots \mathrm{S}_{d_r}}^{\mathrm{S}_n}\phi(\iota_1) \boxtimes \cdots \boxtimes \phi(\iota_r) & = j_{\mathrm{S}_1 \times \cdots \mathrm{S}_{d_r}}^{\mathrm{S}_n}\left(j_{\mathrm{S}_{\iota_{1}^t} \times \cdots \times \mathrm{S}_{\iota_{r}^t}}^{\mathrm{S}_{d_1} \times \cdots \times \mathrm{S}_{d_r}}\sgn_{\iota_{1}^t} \boxtimes \cdots \boxtimes \sgn_{\iota_{r}^t}\right)\\
         & = j^{\mathrm{S}_n}_{\mathrm{S}_{\iota_{1}^t} \times \cdots \times \mathrm{S}_{\iota_{r}^t}}\sgn_{\iota_{1}^t} \boxtimes \cdots \boxtimes \sgn_{\iota_{r}^t}\\
         & = \phi\left(\iota_1 \mathop{\sqcup}\limits^r \iota_2 \cdots \mathop{\sqcup}\limits^r  \iota_r\right),
    \end{align}
    where the third equality is the definition of $\phi$.
\end{proof}

We end this section with some branching formulas for the induction of representations of symmetry groups. 

Let $\mathrm{W}_n = \mathrm{S}_{n} \ltimes \{\pm 1\}^n$ be the Weyl group of type $B/C$, and define quadratic character 
\defmap{\epsilon}{\mathrm{W}_n}{\{\pm 1\}}{(s,(x_1, \cdots, x_n))}{x_1\cdots x_n}
and as always, $\sgn$ denote the sign character.

\begin{prop}[Pieri's rule]\label{Pieri}
    Let $k + l = n$ be positive integers, $\iota \in \mathrm{YD}_{k}$ then
    \begin{equation}
        \Ind_{\mathrm{S}_k \times \mathrm{S}_l}^{\mathrm{S}_{n}}\phi(\iota) \boxtimes 1 = \bigoplus_{\nu} \phi(\nu),
    \end{equation}
    where the sum is over all Young diagram $\nu \in \mathrm{YD}_n$, which is obtained from adding $l$ boxes to $\iota$, with no two boxes in the same column.
    
    Similarly, we have
    \begin{equation}
        \Ind_{\mathrm{S}_{k} \times \mathrm{S}_{l}}^{\mathrm{S}_n}\phi(\iota) \boxtimes \sgn = \bigoplus_{\nu}\phi(\nu),
    \end{equation}
    where the sum is over all Young diagrams $\nu \in \mathrm{YD}_n$, which is obtained from adding $l$ boxes to $\iota$, with no two boxes in the same row.
\end{prop}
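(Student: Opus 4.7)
The plan is to translate the statement into the ring of symmetric functions via the Frobenius characteristic map and then invoke the classical Pieri rule in that setting. Let $\Lambda = \bigoplus_{n \geq 0} \Lambda_n$ denote the ring of symmetric functions in infinitely many variables (over $\BC$), and let $\mathrm{ch}: \bigoplus_n \CK(\mathrm{S}_n) \to \Lambda$ be the Frobenius characteristic isomorphism. The three classical inputs I would rely on are: (a) under $\mathrm{ch}$ the product of $\Lambda$ corresponds to induction from $\mathrm{S}_k \times \mathrm{S}_l$ up to $\mathrm{S}_{k+l}$; (b) $\mathrm{ch}(1_{\mathrm{S}_l}) = h_l$ and $\mathrm{ch}(\sgn_{\mathrm{S}_l}) = e_l$; (c) under $\mathrm{ch}$ the Specht module labelled by $\iota$ corresponds to the Schur function $s_\iota$.

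First, I would verify that the representation $\phi(\iota) = j^{\mathrm{S}_k}_{\mathrm{S}_{\iota^t}} \sgn_{\iota^t}$ from the preceding theorem actually coincides with the Specht module attached to $\iota$. By transitivity of ordinary induction together with (a) and (b), the image of $\Ind^{\mathrm{S}_k}_{\mathrm{S}_{\iota^t}} \sgn_{\iota^t}$ under $\mathrm{ch}$ is the product $e_{\br_1(\iota^t)} \cdots e_{\br_{k'}(\iota^t)}$, which by the dual Jacobi--Trudi expansion equals $s_\iota + \sum_{\mu \triangleright \iota} K_{\mu^t,\iota^t}\, s_\mu$ in dominance order. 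The minimality condition built into the definition of $j$-induction (Theorem \ref{j-ind}) and a comparison of fake degrees then singles out the $s_\iota$-summand, yielding $\phi(\iota) = \mathrm{ch}^{-1}(s_\iota)$.

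With this identification in hand, applying $\mathrm{ch}$ to the first induction in the proposition gives
\begin{equation*}
  \mathrm{ch}\bigl(\Ind_{\mathrm{S}_k \times \mathrm{S}_l}^{\mathrm{S}_n} \phi(\iota) \boxtimes 1\bigr) \;=\; s_\iota \cdot h_l \;=\; \sum_\nu s_\nu,
\end{equation*}
where the last equality is the classical Pieri rule and the sum is over $\nu \in \mathrm{YD}_n$ obtained from $\iota$ by adding a horizontal $l$-strip, that is, $l$ boxes no two of which lie in the same column. Pulling back through $\mathrm{ch}^{-1}$ yields the first formula. The second formula follows by the identical argument using $\mathrm{ch}(\sgn_l) = e_l$ and the dual Pieri identity $s_\iota \cdot e_l = \sum_\nu s_\nu$, where $\nu$ now runs over vertical $l$-strips.

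The main obstacle is the identification $\phi(\iota) = \mathrm{ch}^{-1}(s_\iota)$ in the second paragraph, since the paper constructs $\phi$ through $j$-induction rather than the usual Young symmetrizer. If one prefers to avoid the characteristic map altogether, an alternative route is to compute $\Ind_{\mathrm{S}_{\iota^t} \times \mathrm{S}_l}^{\mathrm{S}_n}\bigl(\sgn_{\iota^t} \boxtimes 1\bigr)$ by a two-step application of Frobenius reciprocity and the Mackey formula, then match irreducible constituents by the branching rule for $\mathrm{S}_{n-1} \subset \mathrm{S}_n$; but routing through symmetric functions is the cleanest way to control both formulas simultaneously.
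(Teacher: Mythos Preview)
The paper does not give its own proof of this proposition; it simply cites \cite[Corollary 6.1.7]{GP}. Your argument via the Frobenius characteristic map is the standard route to Pieri's rule and is correct. The only step requiring care is the identification $\phi(\iota) = \mathrm{ch}^{-1}(s_\iota)$, since the paper defines $\phi$ through $j$-induction rather than Young symmetrizers; your fake-degree argument handles this, and indeed the fact that $j^{\mathrm{S}_n}_{\mathrm{S}_{\iota^t}}\sgn_{\iota^t}$ recovers the Specht module indexed by $\iota$ is exactly what is proved in the reference \cite[Chapter 5.4]{GP} that the paper cites for the bijection $\phi$. So your proposal supplies a genuine proof where the paper only gives a citation, and the approach you take is the expected one.
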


\begin{proof}
    See \cite[Corollary 6.1.7]{GP}.
\end{proof}

\begin{prop}\label{branch}
    If we identify $\mathrm{W_n}$ as a subgroup of $\mathrm{S}_{2n}$ in the natural way, then
    \begin{align}
        &\Ind_{\mathrm{W}_n}^{\mathrm{S}_{2n}} \epsilon = \bigoplus_{\substack{\sigma \in \mathrm{YD}_{2n}\\ \textrm{ $\bc_{i}(\sigma)$ is even for all $i \in \BN^+$}}} \phi(\sigma),\\
        &\Ind_{\mathrm{W}_n}^{\mathrm{S}_{2n}} 1 = \bigoplus_{\substack{\sigma \in \mathrm{YD}_{2n}\\ \textrm{ $\br_{i}(\sigma)$ is even for all $i \in \BN^+$}}} \phi(\sigma)
    \end{align}
\end{prop}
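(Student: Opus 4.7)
\emph{Proposal.} Both identities are classical branching rules for the inclusion of the hyperoctahedral group $\mathrm{W}_n = \mathrm{S}_2 \wr \mathrm{S}_n$ into $\mathrm{S}_{2n}$, and my plan is to derive them via the Frobenius characteristic map and the classical Littlewood symmetric-function identities.

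First I would invoke the Frobenius characteristic isomorphism
\[
\mathrm{ch}: \bigoplus_{n \geq 0} \CK(\mathrm{S}_n) \xrightarrow{\sim} \Lambda,
\qquad \phi(\sigma) \mapsto s_\sigma,
\]
where $\Lambda$ is the ring of symmetric functions and $s_\sigma$ is the Schur function indexed by $\sigma$. The standard formula for the Frobenius characteristic of a representation induced from a wreath product $\mathrm{S}_k \wr \mathrm{S}_n$ is a plethystic substitution: for any character $\chi_0$ of $\mathrm{S}_k$, the character of $\mathrm{S}_k \wr \mathrm{S}_n$ obtained by tensoring $\chi_0$ on each factor (and trivially on the outer $\mathrm{S}_n$) induces to a representation of $\mathrm{S}_{kn}$ whose characteristic is $h_n[\mathrm{ch}(\chi_0)]$. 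Applied to $k=2$ with $\chi_0 = 1$ (characteristic $h_2$) or $\chi_0 = \mathrm{sgn}$ (characteristic $e_2$), this yields
\[
\mathrm{ch}\bigl(\mathrm{Ind}_{\mathrm{W}_n}^{\mathrm{S}_{2n}} 1\bigr) = h_n[h_2],
\qquad
\mathrm{ch}\bigl(\mathrm{Ind}_{\mathrm{W}_n}^{\mathrm{S}_{2n}} \epsilon\bigr) = h_n[e_2].
\]

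Next I would apply the classical Littlewood identities. Using the generating-series formula $\sum_n h_n[f] = \prod_\alpha (1-x_\alpha)^{-1}$ (the product running over all monomials of $f$), one obtains
\begin{align*}
\sum_{n \geq 0} h_n[h_2] &= \prod_{i \leq j}(1 - x_i x_j)^{-1} = \sum_{\lambda : \br_i(\lambda) \text{ even for all } i} s_\lambda,\\
\sum_{n \geq 0} h_n[e_2] &= \prod_{i < j}(1 - x_i x_j)^{-1} = \sum_{\lambda : \bc_i(\lambda) \text{ even for all } i} s_\lambda.
\end{align*}
Extracting the degree-$2n$ component and applying $\mathrm{ch}^{-1}$ translates these Schur expansions into the claimed decompositions of the induced $\mathrm{S}_{2n}$-representations.

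The main obstacle is justifying the two classical inputs: the plethystic formula for the Frobenius characteristic of a wreath-product induction, and the Littlewood identities expressing $\prod_{i \leq j}(1 - x_i x_j)^{-1}$ and $\prod_{i < j}(1 - x_i x_j)^{-1}$ as even-row and even-column Schur sums. Both are well known (cf.\ Macdonald, \emph{Symmetric Functions and Hall Polynomials}, Chapter I). A more self-contained alternative for the trivial-character case identifies $\mathrm{Ind}_{\mathrm{W}_n}^{\mathrm{S}_{2n}} 1$ with the permutation representation of $\mathrm{S}_{2n}$ on the set of perfect matchings of $\{1,\ldots,2n\}$ (equivalently, fixed-point-free involutions) and invokes Sch\"utzenberger's refinement of the RSK correspondence, which puts these in bijection with standard Young tableaux whose shape has all rows of even length; the sign-twisted case then follows by tensoring with the sign character of $\mathrm{S}_{2n}$ and transposing Young diagrams.
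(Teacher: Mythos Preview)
Your main argument via the Frobenius characteristic and Littlewood's identities is correct and is one of the standard routes to these branching rules. The paper, however, does not prove the proposition at all: it simply refers the reader to \cite[Lemma~4.1(b)]{BV83}. So your proposal already supplies substantially more than the paper does, and there is nothing further to compare on the level of method.

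One caution about your alternative RSK sketch. Sch\"utzenberger's theorem states that under RSK an involution with $k$ fixed points maps to a standard tableau whose shape has exactly $k$ columns of odd length; hence fixed-point-free involutions correspond to tableaux with all \emph{columns} even, not all rows. More importantly, this bijection is only a set-theoretic count: it yields
\[
|\mathrm{S}_{2n}/\mathrm{W}_n| \;=\; \sum_{\substack{\lambda \in \mathrm{YD}_{2n}\\ \text{all columns even}}} \dim\phi(\lambda),
\]
but since $\sum_{\text{even-row }\lambda}\dim\phi(\lambda) = \sum_{\text{even-column }\lambda}\dim\phi(\lambda)$ by transposition, the count alone cannot distinguish which $\phi(\lambda)$ actually occur in the permutation module. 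To upgrade the bijection to a module decomposition you would need the Gelfand-pair structure of $(\mathrm{S}_{2n},\mathrm{W}_n)$ together with an independent identification of the spherical representations, and at that point you have essentially redone the symmetric-function computation. I would drop the RSK paragraph or recast it as a remark on dimensions only; your plethysm argument already suffices.
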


\begin{proof}
    See \cite[Lemma 4.1 (b)]{BV83}.
\end{proof}

\subsection{Counting formula}

The following inequality is the main tool for counting irreducible representations, as proved in \cite{BMSZ}. 

\begin{thm}\label{counting}
    Let $G$ be a real reductive group in Harish-Chandra's class, we have an inequality
    \begin{equation}
        \sharp(\Irr_{\nu,S}(G)) \leq \sum_{\sigma \in \Irr_{S}(W(\Lambda))} [1_{W_\nu}:\sigma] \cdot [\sigma:\mathrm{Coh}_{\Lambda}(\CK(G))],
    \end{equation}
    where $1_{W_\nu}$ denotes the trivial representation of the stabilizer $W_\nu$ of $\nu$ in $W$. The equality holds if the Coxeter group $W(\Lambda)$ has no simple factor of type $F_4$, $E_6$, $E_7$, or $E_8$, and $G$ is linear or isomorphic to a real metaplectic group.
\end{thm}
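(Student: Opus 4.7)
The plan is to relate $\Irr_{\nu,S}(G)$ to $W_\nu$-invariants in an associated-variety-filtered piece of the coherent continuation representation $\mathrm{Coh}_\Lambda(\CK(G))$, then apply Frobenius reciprocity.

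First, I would introduce the evaluation map
\[
e_\nu \colon \mathrm{Coh}_\Lambda(\CK(G)) \longrightarrow \CK_\nu(G), \qquad \Psi \mapsto \Psi(\nu).
\]
For any $w \in W_\nu$, $(w \cdot \Psi)(\nu) = \Psi(w^{-1}\nu) = \Psi(\nu)$, so $e_\nu$ is $W_\nu$-equivariant when $\CK_\nu(G)$ is given the trivial $W_\nu$-action. Since every $V \in \Irr_\nu(G)$ is realised as $\bar{\Psi}_\gamma(\nu)$ for some $\gamma \in \CP_\Lambda(G)$, the map $e_\nu$ is surjective; because finite-group coinvariants agree with invariants over $\BC$, this yields a surjection
\[
\mathrm{Coh}_\Lambda(\CK(G))^{W_\nu} \twoheadrightarrow \CK_\nu(G).
\]

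Next, to enforce the associated-variety constraint, I would use the filtration of $\mathrm{Coh}_\Lambda(\CK(G))$ by the complex associated variety of the underlying irreducible modules. The subspace $\mathrm{Coh}_{\Lambda,S}(\CK(G))$ spanned by those $\bar{\Psi}_\gamma$ whose specialisations have $\mathrm{AV}_\BC \subseteq S$ is $W_\Lambda$-stable, and by the Barbasch--Vogan analysis of this filtration through Kazhdan--Lusztig cells, combined with Lusztig's double-cell equivalence $\approx$ recalled in Section \ref{2.3}, its $W_\Lambda$-composition factors are precisely the $\sigma \in \Irr_S(W(\Lambda))$. Restricting $e_\nu$ gives a surjection $\mathrm{Coh}_{\Lambda,S}(\CK(G))^{W_\nu} \twoheadrightarrow \CK_{\nu,S}(G)$; combining with the Frobenius identity for invariants yields
\[
\sharp(\Irr_{\nu,S}(G)) \;\leq\; \dim \mathrm{Coh}_{\Lambda,S}(\CK(G))^{W_\nu} \;=\; \sum_{\sigma \in \Irr_S(W(\Lambda))} [1_{W_\nu} : \sigma] \cdot [\sigma : \mathrm{Coh}_\Lambda(\CK(G))],
\]
which is the claimed inequality.

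For the equality statement, one needs that under the stated hypotheses the surjection $\mathrm{Coh}_{\Lambda,S}(\CK(G))^{W_\nu} \twoheadrightarrow \CK_{\nu,S}(G)$ is in fact a bijection, i.e.\ that $e_\nu$ has no hidden kernel among $W_\nu$-invariants. For linear classical groups (and their metaplectic covers in the $\tilde{A}$ setting), this can be established by exploiting that every Springer representation of a classical Weyl group is univalent and that each Lusztig double cell contains a unique special representation, so that the contribution of each $\sigma \in \Irr_S(W(\Lambda))$ to both sides is pinned down exactly. The exceptional types $F_4, E_6, E_7, E_8$ are excluded precisely because there the Springer/univalent matching, and the coincidence between the associated-variety filtration and the cell filtration, can fail.

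The principal obstacle is the second step: correctly identifying $\mathrm{Coh}_{\Lambda,S}(\CK(G))$ as a $W_\Lambda$-submodule whose composition factors are exactly $\Irr_S(W(\Lambda))$. This rests on the deep chain of results linking primitive ideals, associated varieties, Kazhdan--Lusztig cells, and the Springer correspondence (Joseph, Borho--Brylinski, Barbasch--Vogan, Lusztig); for the equality statement it must be supplemented by a case-by-case analysis in classical types to rule out accidental kernels on $W_\nu$-invariants.
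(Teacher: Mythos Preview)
The paper does not prove this theorem itself; it is quoted from \cite{BMSZ}. Your outline matches the overall shape of the argument there (evaluate at $\nu$, filter by associated variety, apply Frobenius reciprocity), but one step hides a genuine difficulty. The problematic assertion is the surjection $\mathrm{Coh}_{\Lambda,S}(\CK(G))^{W_\nu} \twoheadrightarrow \CK_{\nu,S}(G)$. Even granting that $e_\nu$ is surjective (which at singular $\nu$ already requires the Jantzen--Zuckerman translation principle rather than the Langlands classification you invoke), you still need: if $\pi \in \Irr_{\nu,S}(G)$ and $\pi = \bar\Psi_\gamma(\nu)$, then $\bar\Psi_\gamma \in \mathrm{Coh}_{\Lambda,S}$. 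But membership in $\mathrm{Coh}_{\Lambda,S}$ is a condition on the \emph{generic} associated variety of the coherent family, whereas your hypothesis only controls $\mathrm{AV}_\BC$ at the possibly singular point $\nu$. The missing ingredient is that the associated variety of $\bar\Psi_\gamma(\nu')$ is constant over all $\nu'$ with $\bar\Psi_\gamma(\nu') \neq 0$; this follows from the behaviour of primitive ideals under translation functors and is not a formality.

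You correctly flag the identification of the $W(\Lambda)$-constituents of $\mathrm{Coh}_{\Lambda,S}$ with $\Irr_S(W(\Lambda))$, carrying the same multiplicities as in $\mathrm{Coh}_\Lambda$, as the deep input; it rests on the compatibility of Harish-Chandra cells with Lusztig's double cells. For the equality statement your sketch is too loose: univalence of Springer representations and uniqueness of the special representation in each double cell do not by themselves force $e_\nu$ to be injective on $W_\nu$-invariants. The hypotheses on $W(\Lambda)$ and on $G$ enter through the fine structure of Lusztig families and certain cell-level multiplicity statements, established in \cite{BMSZ} by a type-by-type analysis rather than by the general Springer-correspondence considerations you mention.
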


Moreover, using induction on the dimension of orbits together with Equation (\ref{(1.3)}) and Equation (\ref{(2.16)}), we obtain

\begin{thm}
    Under the setting of Theorem \ref{counting}, we have an inequality
    \begin{equation}
        \sharp(\Irr_{\nu}(G;\CO)) \leq \sum_{\sigma \in \Irr(W(\Lambda);\CO)}[1_{W_\nu}:\sigma] \cdot [\sigma:\mathrm{Coh}_{\Lambda}(\CK(G))]
    \end{equation}
    where $\CO \in \bar{\Nil}(\fg^*)$ is a nilpotent orbit.  The equality holds if the Coxeter group $W(\Lambda)$ has no simple factor of type $F_4$, $E_6$, $E_7$, or $E_8$, and $G$ is linear or isomorphic to a real metaplectic group.
\end{thm}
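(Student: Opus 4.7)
The plan is to localize the bulk counting inequality of Theorem \ref{counting} to a single orbit by applying it to the two $\Inn(\fg)$-stable Zariski closed subsets $\bar{\CO}$ and $\bar{\CO}\setminus\CO$, and then decomposing both sides via equations (\ref{(1.3)}) and (\ref{(2.16)}). Writing
$$a_{\CO'} := \sharp(\Irr_\nu(G;\CO')), \qquad b_{\CO'} := \sum_{\sigma \in \Irr(W(\Lambda);\CO')}[1_{W_\nu}:\sigma]\cdot[\sigma:\mathrm{Coh}_\Lambda(\CK(G))],$$
Theorem \ref{counting} applied to a closed $S$ becomes the summed statement $\sum_{\CO'\subseteq S}a_{\CO'} \leq \sum_{\CO'\subseteq S}b_{\CO'}$, and the target claim is the per-orbit refinement $a_\CO \leq b_\CO$.

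The proof will proceed by induction on $\dim\CO$. The base case is the zero orbit, where $\bar{\CO} = \CO$ and the target inequality is exactly Theorem \ref{counting} for $S = \CO$. For the inductive step with $\dim\CO > 0$, one applies Theorem \ref{counting} separately to $\bar{\CO}$ and to $\bar{\CO}\setminus\CO$, both of which are Zariski closed. Under the hypotheses of the equality case (no simple factors of type $F_4, E_6, E_7, E_8$ in $W(\Lambda)$, and $G$ linear or metaplectic), the inductive hypothesis gives $a_{\CO'} = b_{\CO'}$ for every $\CO' \subsetneq \CO$, hence equality at $\bar{\CO}\setminus\CO$; combined with the corresponding equality at $\bar{\CO}$, subtracting yields $a_\CO = b_\CO$.

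The hard part will be the general inequality, since naive subtraction of two $\leq$ inequalities does not preserve direction. To handle this, one invokes the $W_\Lambda$-invariant filtration of $\mathrm{Coh}_\Lambda(\CK(G))$ by complex associated variety that is implicit in the proof of Theorem \ref{counting} in \cite{BMSZ}: since the cross action preserves complex associated variety, for each $\Inn(\fg)$-stable Zariski closed $S$ the subspace $\mathrm{Coh}_\Lambda(\CK_S(G))$ is a $W_\Lambda$-submodule, and via the Springer correspondence together with Lusztig's cell theory the irreducible $W_\Lambda$-constituents of the graded piece $\mathrm{Coh}_\Lambda(\CK_{\bar{\CO}}(G))/\mathrm{Coh}_\Lambda(\CK_{\bar{\CO}\setminus\CO}(G))$ lie in $\Irr(W(\Lambda);\CO)$. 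Applying the dimension-of-$W_\nu$-invariants argument from the proof of Theorem \ref{counting} to this single graded piece gives the per-orbit inequality $a_\CO \leq b_\CO$ directly, which completes the induction.
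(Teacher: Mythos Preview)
Your proposal is correct and follows the same route as the paper, which derives the per-orbit statement from Theorem~\ref{counting} by induction on $\dim\CO$ using the decompositions (\ref{(1.3)}) and (\ref{(2.16)}). You are in fact more careful than the paper's one-line argument: you correctly observe that the inequality direction does not follow by subtracting two $\leq$-statements and instead requires the associated-variety filtration of $\mathrm{Coh}_\Lambda(\CK(G))$ from \cite{BMSZ}, a point the paper leaves implicit.
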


For the case $\tilde{A}$, the above theorem yields the following formula immediately.

\begin{cor}
    \begin{equation}
        \sharp\left(\Irr_{\nu}^{\mathrm{gen}}\left(\tilde{\U}(p,q);\CO\right)\right) = \sum_{\sigma \in \Irr(W(\Lambda);\CO)} [1_{W_\nu}:\sigma] \cdot [\sigma:\mathrm{Coh}_{\Lambda}(\CK^{\mathrm{gen}}(G))].
    \end{equation}
\end{cor}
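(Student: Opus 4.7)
The plan is to obtain this equality as an immediate consequence of the equality case of Theorem \ref{counting}. Two things must be checked. First, that the whole machinery of Section \ref{2.2}---standard modules, the parameter set $\CP_{\Lambda}(G)$, the cross action, and the Barbasch--Vogan decomposition (Theorem \ref{Coh})---carries over verbatim after replacing $\CK(G)$ by $\CK^{\mathrm{gen}}(G)$; this is precisely what the remark at the end of Section \ref{2.2} asserts, so the corresponding genuine version of Theorem \ref{counting} is available with the same proof. Second, that the two structural hypotheses in the equality clause of Theorem \ref{counting} are met.

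For the Coxeter-type condition: the complexified Lie algebra of $\tilde{\U}(p,q)$ is $\fgl_n(\BC)$, so $W = \mathrm{S}_n$ and any integral Weyl subgroup $W(\Lambda) \subseteq W$ is a Young subgroup, i.e.\ a direct product of symmetric groups. In particular $W(\Lambda)$ is of type $A$ and contains no simple Coxeter factor of type $F_4$, $E_6$, $E_7$, or $E_8$. For the group-theoretic condition: $\tilde{\U}(p,q)$ is by construction the linear double cover of the real reductive group $\U(p,q)$ defined through the pullback diagram in Section \ref{1.2}, and as such lies in the class of real metaplectic groups to which the equality clause applies.

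Combining these two items with the genuine analogue of Theorem \ref{counting} yields the corollary. As a sanity check, and as a fallback should the definition of ``real metaplectic group'' in \cite{BMSZ} not literally include $\tilde{\U}(p,q)$, one can invoke the bijection $\psi \colon V \mapsto V \otimes \mathrm{det}^{\frac{1}{2}}$ of the genuine-representation subsection: it identifies $\Irr_{\nu}^{\mathrm{gen}}(\tilde{\U}(p,q);\CO)$ with $\Irr_{\nu+\mu}(\U(p,q);\CO)$, and the rigid translation $\Lambda \mapsto \Lambda + \mu$ does not disturb the $W$-action, so $\mathrm{Coh}_{\Lambda}(\CK^{\mathrm{gen}}(\tilde{\U}(p,q))) \cong \mathrm{Coh}_{\Lambda + \mu}(\CK(\U(p,q)))$ as $W_{\Lambda} = W_{\Lambda + \mu}$-modules, and applying Theorem \ref{counting} to the linear group $\U(p,q)$ at the shifted infinitesimal character $\nu + \mu$ reproduces the desired equality. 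The only genuine obstacle is the bookkeeping in this last identification---tracking that $W_{\nu} = W_{\nu+\mu}$ and that the two coherent continuation representations agree under the translation by $\mu$---but with $\psi$ in hand this is routine, so the proof reduces to an invocation of the already-established linear case of Theorem \ref{counting}.
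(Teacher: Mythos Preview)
Your overall strategy matches the paper's (which simply says ``the above theorem yields the following formula immediately''), and your fallback argument via the twist $\psi$ is correct. However, one step in your primary argument is wrong: you assert that $\tilde{\U}(p,q)$ ``lies in the class of real metaplectic groups.'' It does not. The real metaplectic group is specifically the \emph{nonlinear} double cover of $\Sp_{2n}(\BR)$; the group $\tilde{\U}(p,q)$ is neither a cover of a symplectic group nor nonlinear. The correct justification for the group-theoretic hypothesis is the other alternative in the equality clause of Theorem \ref{counting}: $\tilde{\U}(p,q)$ is \emph{linear}. Indeed, the pullback construction in Section \ref{1.2} realizes it as a closed subgroup of $\U(p,q) \times \BS^{1}$, and the paper explicitly calls it the ``(linear) double cover.'' With this correction your direct argument goes through.

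Your fallback route through $\psi$ is also valid and makes the dependence on the linear case of Theorem \ref{counting} completely transparent: since $\mu = (\tfrac{1}{2},\ldots,\tfrac{1}{2})$ is $W$-fixed, one has $W_{\nu} = W_{\nu+\mu}$, $W(\Lambda) = W(\Lambda+\mu)$, and tensoring by the one-dimensional character $\det^{1/2}$ commutes with tensoring by finite-dimensional holomorphic representations of $G_{\BC}$, so it carries $\mathrm{Coh}_{\Lambda}(\CK^{\mathrm{gen}}(\tilde{\U}(p,q)))$ isomorphically onto $\mathrm{Coh}_{\Lambda+\mu}(\CK(\U(p,q)))$ as $W_{\Lambda}$-modules. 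Either argument suffices; just drop the metaplectic claim.
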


\section{Counting results for general linear groups and unitary groups}

   \subsection{Standard representations of classical groups}\label{3.1}

   We introduce some notions in \cite{BMSZ} related to standard representations of classical groups. Given a label $\star$, define the notion of a $\star$-structure, which consists of a finite dimensional complex vector space $V$ and some additional data. The group $\G(V)$ will be defined as the (real) subgroup of $\GL(V)$ fixing the $\star$-structure. Its Zariski closure in $\GL(V)$ will be denoted by $\G_{\BC}(V)$. The natural inclusion $\iota: \G(V) \hookrightarrow \G_{\BC}(V)$ satisfies the assumptions at the beginning of Section \ref{2.1}.

    \textbf{The case when $\star \in \{A^{\BR},A^{\BH}\}$}. In this case, a $\star$-structure consists of a finite dimensional complex vector space $V$, and a conjugate linear automorphism $\mathbf{j}: V \to V$ such that
   $$\mathbf{j}^2 = \left\{
   \begin{aligned}
    1 , & \  \textrm{if $\star = A^{\BR}$}; \\
      -1, & \  \textrm{if $\star = A^{\BH}$}.
   \end{aligned}
   \right.$$
   The group $\G(V)$ is $\GL_n(\BR)$ when $\mathbf{j}^2 = 1$, and $\GL_{\frac{n}{2}}(\BH)$ when $\mathbf{j}^2 = -1$. Here $n = \mathrm{dim}(V)$.

    \textbf{The case when $\star \in \{A, \tilde{A}\}$}. In this case, a $\star$-structure consists of a finite dimensional complex vector space $V$, and a non-degenerate Hermitian form $\langle -,- \rangle: V \times V \to \BC$ (which is linear on the first variable and conjugate linear on the second variable). The Hermitian form has signature $(p,q)$ so $\G(V)$ is $\U(p,q)$ with $p+q=n$.

    \textbf{The case when $\star = A^\BC$}. In this case, a $\star$-structure consists of an even-dimensional complex vector space
    $V$, with a decomposition $V = W \oplus W'$, and a conjugate linear isomorphism $j: W \to W'$. The group $\G(V)$ is $\GL_n(\BC)$.

   For a $\star$-space $V$, put
   $$
   \G_{\star}(V) := \left\{
    \begin{aligned}
         &\G(V), & \textrm{if $\star \in \{A^{\BR},A^{\BH},A^{\BC},A\}$};\\
         &\textrm{the $\mathrm{det}^{\frac{1}{2}}$-double cover of} \ \G(V), & \textrm{if $\star \in \{\tilde{A}\}$}.
    \end{aligned}
    \right.$$

    Now we assume that $G$ is identified with $\G_{\star}(V)$. We call $V$ the standard representation of $G$. Recall that $n$ is the rank of $\fg$ in all cases. 
   In the case when $\star \in \{A^\BR,A^\BH,A\}$, we fix a flag
\begin{equation}\label{flag}
   \{0\} = V_0 \subseteq V_1 \subseteq \cdots \subseteq V_n
\end{equation}
in $V$ such that $\mathrm{dim}(V_i) = i$ for all $i = 1,2,\cdots,n$.

When $\star = A^\BC$, we fix two flags
\begin{align}
    \{0\} = W_0 \subseteq W_1 \subseteq \cdots \subseteq W_n = W \\
    \{0\} = W'_0 \subseteq W_1' \subseteq \cdots \subseteq W_n' = W'
\end{align}
in $W$ and $W'$ such that $\dim W_i = \dim W_i' = i$ for $i = 0, 1, \cdots ,n$.

The stabilizer of the flag in $\fg$ is a Borel subalgebra of $\fg$. Using this Borel subalgebra, we get natural identifications
$$ {^{a}\fh} = \left\{
   \begin{aligned}
       &\prod^{n}_{i=1}\fgl(V_i/V_{i-1}) = \BC^n, & \textrm{if $\star \in \{A^\BR,A^\BH,A\}$};\\
       &\prod^{n}_{i=1}\fgl(W_i/W_{i-1}) \times \prod^{n}_{i=1}\fgl(W_{i}'/W_{i-1}') = \BC^n \times \BC^n, & \textrm{if $\star = A^\BC$}.
   \end{aligned}
   \right.$$
This identification is independent of the choice of flag (\ref{flag}). As in the Section \ref{2.1}, we have natural identifications:
$$
    Q_{\iota} = \left\{
    \begin{aligned}
        & \BZ^n \subseteq \BC^n = (\BC^n)^* = {^{a}\fh^*}, \ &\textrm{if $\star \in \{A^\BR, A^\BH,A\}$};\\
        & \BZ^n \times \BZ^n \subseteq \BC^n \times \BC^n = (\BC^n)^*\times(\BC^n)^* = {^{a}\fh^*}, \ &\textrm{if $\star = A^\BC$},
   \end{aligned}
   \right.
$$
and the positive roots are 
$$ \Delta^+ = \left\{
    \begin{aligned}
        &\set{e_i - e_j}{1 \leq i < j \leq n}, \ &\textrm{if $\star \in \{A^\BR,A^\BH,A\}$};\\
        &\set{e_i - e_j, \ e_i' - e_j'}{1\leq i<j\leq n}, \ & \textrm{if $\star = A^\BC$}.
    \end{aligned}
   \right.
$$
Where $e_1,e_2,\cdots,e_n$ and $e_1', e_2', \cdots, e_n'$ are both the standard basis of $\BC^n$, they also correspond to all the weights of the standard representation.
The abstract Weyl group 
$$
   W = \left\{
   \begin{aligned}
       &\mathrm{S}_n, & \textrm{if $\star \in \{A^\BR,A^\BH,A\}$};\\
       &\mathrm{S}_n \times \mathrm{S}_n, & \textrm{if $\star = A^\BC$}.
   \end{aligned}
   \right.
$$
Where $\mathrm{S}_n \subseteq \GL_n(\BZ)$ is the group of permutation matrices.

\subsection{Proof of Theorem \ref{R}}\label{3.2}
\subsubsection{Integral case}
If $\nu$ is an integral character in ${^{a}\fh}^*$, i.e. 
$$\nu = (\underbrace{\lambda_1, \cdots, \lambda_1}_{d_1}, \underbrace{\lambda_2, \cdots, \lambda_2}_{d_2}, \cdots, \underbrace{\lambda_k, \cdots, \lambda_k}_{d_k} ) \in {^{a}\fh}^* = \BC^n,$$ 
where $\lambda_i - \lambda_j \in \BZ$. In this case $W_{\Lambda} = W(\Lambda) = W$, $W_\nu = \mathrm{S}_{d_1} \times \cdots \times \mathrm{S}_{d_k}$. 

The set of conjugacy classes of the Cartan subgroup is parameterized by non-negative integers in $\{0,1,\cdots,\lfloor \frac{n}{2}\rfloor\}$, fix a parameter $r$, a representative element of this conjugacy class is given by a Cartan subgroup $H_r$, which is isomorphic to:
$$(\BC^{\times})^r \times (\BR^{\times})^{n-2r}$$
with the corresponding weights $f_1,f_2,\cdots ,f_n$ in the standard representation, where
\begin{align}
    &f_{2i-1}(z_1,\cdots,z_r,t_{2r+1},\cdots,t_{n}) = z_i, \ & \textrm{if $i \leq r$};\\
    &f_{2i}(z_1,\cdots,z_r,t_{2r+1},\cdots,t_{n}) = \bar{z_i}, \ & \textrm{if $i \leq r$};\\
    &f_{i}(z_1,\cdots,z_r,t_{2r+1},\cdots,t_{n}) = t_i, \ & \textrm{if $i \geq 2r$}.
\end{align}
The set of roots is given by
$$\set{\pm (f_i - f_j)}{1 \leq i < j \leq n}$$
with the subset of real roots
$$\set{\pm(f_i - f_j)}{2r \leq i < j \leq n}$$
and the subset of imaginary roots
$$\set{\pm(f_{2i-1} - f_{2i})}{1 \leq i \leq r}$$
moreover, they are all compact imaginary. 

Denote the complexified Lie algebra of $H_r$ by $\fh_r$. Let $\xi \in W({^{a}\fh}^*,\fh_r^*)$ be the unique element such that $\xi(e_i) = f_i$. By abuse of notation, we may also identify $f_i$ with their differentials which are linear functionals on $\fh_r$.

The real Weyl group $W_{H_r}$ of $H_r$ is given by
$$W_{H_r} = (\mathrm{S}_r \ltimes  (W(A_1))^r) \times \mathrm{S}_{n-2r},$$
where 
\begin{itemize}
    \item $\mathrm{S}_r$ is generated by $s_{f_{2i-1}-f_{2i+1}}s_{f_{2i}-f_{2i+2}}$ for $1 \leq i \leq r-1$;
    \item $W(A_1)^r$ is generated by $s_{f_{2i-1}-f_{2i}}$ for $1 \leq i \leq r$ which is the Weyl group of imaginary root system;
    \item $\mathrm{S}_{n-2r}$ is generated by $s_{f_{i}-f_{i+1}}$ for $n-2r +1 \leq i \leq n-1$.
\end{itemize}

Then the quadratic character $\sgn_{\mathrm{im}}$ of $W_{H_r}$ is

\begin{itemize}
    \item sign character on $(W(A_1))^r$;
    \item trivial character on other factors.
\end{itemize}

Let $\CP_r$ denote the set of parameters in $\CP_{\Lambda}(G)$ which are represented by triples of the form $(H,\zeta,\Gamma) \in \mathscr{P}_{\Lambda}(G)$, where $H$ conjugate to $H_r$.

\begin{lem}\label{gp char}
    For any Cartan subgroup $H$ and $\zeta \in W({^{a}\fh}^*,\fh^*)$, $\zeta(\nu) + \delta(\zeta) \in {^{a}\fh^*}$ is always the differential of a continuous character on $H$. 
\end{lem}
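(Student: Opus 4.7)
The plan is to invoke the standard criterion: a linear functional $\mu \in \fh^{*}$ integrates to a continuous character of the identity component $H^{0}$ if and only if $\langle \mu,\check{\alpha}\rangle \in \BZ$ for every imaginary root $\alpha$ of $\fh$. The split factor of $H^{0}$ imposes no constraint, and on the compact torus $T$ the integrality conditions on imaginary coroots cut out exactly the character lattice. Once a character of $H^{0}$ has been constructed, it extends (non-uniquely) across the component group of $H$.

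First I would reduce to the representative Cartan subgroups $H_{r}$ by $G$-conjugation, since $\zeta$, $\delta(\zeta)$, and the criterion above all transport naturally. Under the explicit description of $\fh_{r}^{*}$ in terms of the weights $f_{1},\dots,f_{n}$, the imaginary roots are $\pm(f_{2i-1}-f_{2i})$ for $1 \leq i \leq r$, and crucially every one of them is compact imaginary in the $\GL_{n}(\BR)$ setting.

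Next I would compute $\delta(\zeta)$ explicitly. Because every imaginary root is compact, the two sums in the defining formula for $\delta(\zeta)$ combine into $-\tfrac{1}{2}\rho_{\mathrm{im}}(\zeta)$, where $\rho_{\mathrm{im}}(\zeta)$ is the sum of positive imaginary roots in $\zeta\Delta^{+}$. Writing $\zeta(e_{j}) = f_{\sigma(j)}$ for a permutation $\sigma \in \mathrm{S}_{n}$, and letting $\epsilon_{i} \in \{\pm 1\}$ record whether $f_{2i-1}-f_{2i}$ or its negative lies in $\zeta\Delta^{+}$, a short pairing calculation against the imaginary coroot $\check{f}_{2i-1}-\check{f}_{2i}$ yields
\[
\langle \zeta(\nu)+\delta(\zeta),\, \check{f}_{2i-1}-\check{f}_{2i}\rangle \;=\; \bigl(\nu_{\sigma^{-1}(2i-1)} - \nu_{\sigma^{-1}(2i)}\bigr) - \epsilon_{i},
\]
which lies in $\BZ$ precisely because $\nu$ is integral. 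Since this holds for every imaginary coroot, the criterion is satisfied and the desired character exists.

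The main difficulty is purely bookkeeping: carefully tracking how $\zeta$ permutes the weight labels and how the sign $\epsilon_{i}$ orients each imaginary root relative to $\zeta\Delta^{+}$. The structural point, once this accounting is in place, is that the ``all imaginary roots are compact'' feature of $\GL_{n}(\BR)$ reduces $\delta(\zeta)$ to a single half-sum whose half-integral contribution to each imaginary coroot pairing is exactly absorbed by the integrality of $\nu$.
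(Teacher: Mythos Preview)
Your proposal is correct and follows essentially the same approach as the paper. The paper reduces by conjugation to $H=H_r$ and further to the fixed $\zeta=\xi$, then computes $\delta(\xi)=-\tfrac{1}{2}(1,-1,\dots,1,-1,0,\dots,0)$ and observes that the difference of the $(2i{-}1)$-th and $2i$-th entries of $\xi(\nu)+\delta(\xi)$ is an integer; your coroot-pairing formulation is exactly this computation, and your explicit treatment of an arbitrary $\zeta$ (via the permutation $\sigma$ and signs $\epsilon_i$) makes rigorous the paper's ``without loss of generality $\zeta=\xi$'' step.
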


\begin{proof}
    Without lose of generality, assume $H = H_r$ for some $r$, and $\zeta = \xi$. Then, $\delta(\xi) = -\frac{1}{2} \cdot (\underbrace{1,-1,1,-1,\cdots,1,-1}_{2r},0,\cdots,0)$, where we identify ${^{a}\fh^*}$ with $\fh^*$ through $\xi$. 
    
    $\xi(\nu) = (\nu_1,\cdots,\nu_n)$, with $\nu_i \in \BZ$. The difference between $2i$-th entry and $(2i-1)$-th entry of $\zeta(\nu) + \delta(\zeta)$ is always an integer for $1 \leq i \leq r$, and hence $\zeta(\nu) + \delta(\zeta)$ comes from the differential of a group character.

\end{proof}

Thus, under the cross action, the set of $W$-orbits in $\CP_r$ under the cross action is parameterized by non-negative integers in $\{0,1,\cdots,n-2r\}$. Each $i \in \{0,1,\cdots,n-2r\}$ corresponds to a $W$-orbit of $\CP_r$ represented by $\gamma_{r,i} = G \cdot \upgamma_{r,i} = G \cdot (H_r,\xi,\Gamma^i) \in \CP_{\Lambda}(G)$, where $\Gamma_{\nu}^i|_{\{\pm1\}^{n-2r}} = \underbrace{1 \otimes \cdots \otimes 1}_{i} \otimes \underbrace{\sgn \otimes \cdots \otimes \sgn }_{n-2r-i}$. ($\{\pm1\}^{n-2r} \subseteq (\BR^\times)^{n-2r}$ is the component subgroup.)

We can identify $W$ with $W_{\fh_r}$ through the isomorphism $\xi$, then the centralizer $W_{{\gamma_{r,i}}}$ of ${\gamma_{r,i}} \in \CP_{\Lambda}(G)$ in $W$ is identified with the subgroup $\xi \circ W_{{\gamma_{r,i}}}\circ \xi = (\mathrm{S}_{r} \ltimes (W(A_1))^r) \times (\mathrm{S}_{i} \times \mathrm{S}_{n-2r-i}) \subseteq (\mathrm{S}_{r} \ltimes (W(A_1))^r) \times \mathrm{S}_{n-2r} = W_{H_r} \subseteq W_{\fh}$.

Now, using Theorem \ref{Coh}, we can give the coherent continuation representation as
\begin{equation}
    \mathrm{Coh}_{\Lambda}(\CK(G)) = \bigoplus_{2r + i \leq n} \Ind _{\mathrm{W}_{r} \times \mathrm{S}_{i} \times \mathrm{S}_{n-2r-i}}^{\mathrm{S}_{n}} \epsilon \otimes 1 \otimes 1.
\end{equation}

For a given nilpotent orbit $\CO = \CO_{\iota}$, $\Irr(W;\CO) = \{\phi(\iota)\}$ is a singleton, so 
\begin{equation}
    \sharp(\Irr_{\nu}(G;\CO)) = \left[1_{W_v}: \phi(\iota)\right]\cdot \left[\phi(\iota): \bigoplus_{2r + i \leq n} \Ind _{\mathrm{W}_{r} \times \mathrm{S}_{i} \times \mathrm{S}_{n-2r-i}}^{\mathrm{S}_{n}} \epsilon \otimes 1 \otimes 1\right].
\end{equation}
By Frobenius reciprocity, 
\begin{equation}
\left[1_{W_\nu}: \phi(\iota)\right] = \left[\phi(\iota): \Ind_{W_\nu}^{W}1\right] = \left[\phi(\iota): \Ind_{\mathrm{S}_{d_1} \times \cdots \times \mathrm{S}_{d_k}}^{\mathrm{S}_n}1\right] = \sharp\left(\A_{[d_1,\cdots,d_k]}(\iota)\right),
\end{equation}
the last equality is due to Pieri's rule \ref{Pieri}.

At the same time, by using Proposition \ref{Pieri}, \ref{branch}, and induction by step, we obtain
\begin{equation}
    \left[\phi(\iota): \bigoplus_{2r + i \leq n} \Ind _{\mathrm{W}_{r} \times \mathrm{S}_{i} \times \mathrm{S}_{n-2r-i}}^{\mathrm{S}_{n}} \epsilon \otimes 1 \otimes 1\right] = \sharp\left(\mathrm{P}_{A^\BR}(\iota)\right).
\end{equation}
And hence, we have proved the first part of Theorem \ref{R}.

\subsubsection{General case}
For the case of a possibly non-integral infinitesimal character, i.e. 
$$\nu = (\blam_1, \cdots, \blam_r) \in {^{a}\fh},$$ 
where 
$$\blam_i = (\underbrace{\lambda_{i,1}, \cdots, \lambda_{i,1}}_{d_{i,1}},\cdots,\underbrace{\lambda_{i,k_i},\cdots,\lambda_{i,k_i}}_{d_{i,k_i}}) \in \BC^{e_i},$$  $d_{i,1} \geq \cdots \geq d_{i,k_i} \geq 1$ is a partition of $e_i$, and $\lambda_{i,p} - \lambda_{i,q} \in \BZ$, $\lambda_{i,1} - \lambda_{j,1} \notin \BZ$ for different $i,j$. The integral Weyl group is $W_{\Lambda} = W(\Lambda) = \mathrm{S}_{e_1} \times \cdots \times \mathrm{S}_{e_r} \subseteq \mathrm{S}_{n} = W$. 

Fix a decomposition $V = V_1 \oplus V_2 \oplus \cdots \oplus V_r$ with $\dim V_i = e_i$ and each $V_i$ is stable under $\mathbf{j}$, this induces a subgroup $\G_{A^{\BR}}(V_1) \times \cdots \times \G_{A^{\BR}}(V_r)$ of $\G_{A^{\BR}}(V)$. Let $^{a}\fh_i$ be the abstract Cartan subalgebra of $\G_{A^{\BR}}(V_i)$, we have an identification
$${^{a}\fh_1^*} \times {^{a}\fh_2^*} \times \cdots \times {^{a}\fh_r^*} = \BC^{e_1} \times \BC^{e_2} \times \cdots \times \BC^{e_r} = \BC^n = {^{a}\fh^*},$$
under this identification $\blam_i \in {^{a}\fh_i^*}$, let  $Q_i \subseteq {^{a}\fh_i^*}$ be the analytical weight lattices of $\G_{A^{\BR}}(V_i)$ and $\Lambda_i := \lambda_i + Q_i \subseteq {^{a}\fh_i^*}$. There is also an identification of integral Weyl groups
$$W(\Lambda_1) \times W(\Lambda_2) \times \cdots \times W(\Lambda_r) = \mathrm{S}_{e_1} \times \cdots \times \mathrm{S}_{e_r} = W(\Lambda) = W_{\Lambda}.$$

For each $i$, define $\mathscr{P}_{\Lambda_i}(\G_{A^{\BR}}(V_i))$ to be the set of triples $(H_i, \xi_i, \Gamma_i)$, such that $H_i$ is a Cartan subgroup of $\G_{A^{\BR}}(V_i)$, $\xi_i \in W({^{a}\fh_i^*},\fh_i^*)$ where $\fh_i$ is the complexified Lie algebra of $H_i$, and \defmap{\Gamma_i}{\Lambda_i}{\Irr(H_i)}{\nu_i}{\Gamma_{\nu_i}}
which satisfies the same condition as Definition \ref{regular character}. Similarly, define $\CP_{\Lambda_i}(\G_{A^{\BR}}(V_i))$ to be the set of $\G_{A^{\BR}}(V_i)$-conjugacy classes under the standard adjoint action of $\G_{A^{\BR}}(V_i)$.

Now we can define a map 
\begin{equation}\label{3.12}
    \begin{aligned}
        \varphi :\quad  & \CP_{\Lambda_i}(\G_{A^{\BR}}(V_1)) \times \cdots \times \CP_{\Lambda_r}(\G_{A^{\BR}}(V_r)) &\longrightarrow &\quad \CP_{\Lambda}(G) \\
        \quad  &  ((H_1,\xi_1,\Gamma_1),\cdots,(H_r,\xi_r,\Gamma_r))   &\longmapsto  &\quad (H,\xi,\Gamma)
    \end{aligned}
 \end{equation}

where $H = H_1 \times \cdots \times H_r \subseteq G$, and $\xi$ is the composition of
\[
    ^{a}\fh^*={^{a}\fh}_1^*\times \cdots \times  {^{a}\fh}_{r}^* \xrightarrow{\xi_1 \times \cdots \times \xi_r} \fh_1^* \times \cdots \times \fh_r^* = \fh^*,
\]
and $\Gamma: \Lambda \to \Irr(H)$ is the map
$$\nu = (\nu_1,\cdots ,\nu_r) \mapsto \Gamma_{1,\nu_1} \otimes \cdots \otimes \Gamma_{r,\nu_r}.$$

Similar to \cite[Proposition 6.13]{BMSZ}, we have the following proposition.

\begin{prop}\label{varphi bij}
    The map $\varphi$ defined above is bijective.
\end{prop}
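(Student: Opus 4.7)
The plan is to exhibit an explicit inverse to $\varphi$ and verify both directions are well-defined on $G$-orbits, following the template of \cite[Proposition 6.13]{BMSZ}. Begin with well-definedness of $\varphi$ at the level of triples: given a tuple $((H_i,\xi_i,\Gamma_i))_{i=1}^r$, the product $H = H_1\times\cdots\times H_r$ is a Cartan subgroup of the block subgroup $\prod_i\G_{A^\BR}(V_i)\subseteq G$, hence a Cartan of $G$ of the correct rank; the Borels of $\fg$ containing $\fh$ arising from compatible Borels in each $\G_{A^\BR}(V_i)$ produce $\xi=\bigoplus\xi_i$; and $\Gamma_\nu=\bigotimes_i\Gamma_{i,\nu_i}$ is a character of $H$ whose differential and $Q_\iota$-equivariance properties reduce factor-by-factor to those of the $\Gamma_i$. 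Since each construction is equivariant under conjugation within the individual factors, $\varphi$ descends to a well-defined map on $\prod_i \CP_{\Lambda_i}(\G_{A^\BR}(V_i))\to \CP_\Lambda(G)$.

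For the inverse, I would start with a representative $(H,\xi,\Gamma)\in\mathscr{P}_\Lambda(G)$ and recover the $\mathbf{j}$-stable decomposition $V=V_1\oplus\cdots\oplus V_r$ from the block structure of $\nu$. Namely, the identification $\xi:{^{a}\fh}^*\to\fh^*$ sends the canonical decomposition ${^{a}\fh^*}=\prod_i{^{a}\fh_i^*}$ to a partition of the weights of $H$ on $V$ into $r$ sets, and I take $V_i\subseteq V$ to be the sum of the weight lines indexed by block $i$. Define $H_i:=H\cap\G_{A^\BR}(V_i)$, let $\xi_i$ be the restriction of $\xi$, and let $\Gamma_{i,\nu_i}$ be obtained by restricting $\Gamma_{(\nu_1,\dots,\nu_r)}$ to $H_i$ (which is independent of the other coordinates since $\Gamma$ factors, once $\mathbf{j}$-stability of the $V_i$ is established). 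Each resulting triple $(H_i,\xi_i,\Gamma_i)$ satisfies the axioms of Definition \ref{regular character} for $\Lambda_i$, and any two decompositions coming from $G$-conjugate data differ by conjugation inside $\prod_i\G_{A^\BR}(V_i)$, because the blocks $\blam_i$ lie in mutually non-integral cosets and so no permutation of the factors is allowed. This gives both injectivity (two tuples mapping to $G$-conjugate triples must already be conjugate factor-by-factor) and surjectivity (the inverse construction lands in the product of orbit sets).

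The main obstacle is proving that each $V_i$ extracted above is actually $\mathbf{j}$-stable, i.e.\ that the conjugation $\mathbf{j}$ does not mix blocks. This must be read off from the condition that $\Gamma_\nu$ is a genuine group character of $H$ rather than only a Lie-algebra character: as in Lemma \ref{gp char}, for each imaginary coroot $\check{\alpha}$ of $(\fg,\fh)$, the pairing $\langle\check{\alpha},\xi(\nu)+\delta(\xi)\rangle$ must lie in $\BZ$, because $\alpha$ exponentiates to the compact torus of $H$. Concretely, for $H\cong(\BC^\times)^s\times(\BR^\times)^{n-2s}$, the compact imaginary coroots pair the weights $f_{2i-1}$ and $f_{2i}$; if $\xi$ were to send a paired slot into block $i$ and its conjugate into block $j\neq i$, then $\langle\check{\alpha},\xi(\nu)\rangle$ would become the difference of a coordinate of $\blam_i$ and one of $\blam_j$, which is not an integer by hypothesis. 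This forces each conjugate pair (and hence each $\mathbf{j}$-orbit of weight lines) to lie entirely within a single block, yielding the desired $\mathbf{j}$-stability. Once this is in hand, the rest of the argument is bookkeeping of $\G_{A^\BR}(V_i)$-conjugacy and matches the proof of \cite[Proposition 6.13]{BMSZ} line for line.
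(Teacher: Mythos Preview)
Your proof is correct and follows the same strategy as the paper, which likewise defers to \cite[Proposition 6.13]{BMSZ} and records only the key point that every triple $(H,\xi,\Gamma)\in\mathscr{P}_\Lambda(G)$ can be conjugated into the Levi $\prod_i\G_{A^\BR}(V_i)$. Your integrality argument for $\mathbf{j}$-stability of the weight blocks is exactly the content behind the paper's one-line assertion, so you have simply unpacked what the paper leaves to ``direct verification.''
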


\begin{proof}
    Note that for every triple $(H,\xi,\Gamma) \in \mathscr{P}_{\Lambda}(G)$, $H$ can be embedding into the Levi subgroup $\G_{A^{\BR}}(V_1) \times \cdots \times \G_{A^{\BR}}(V_r)$ after conjugation. The rest follows from direct verification.
\end{proof}

The bijection of parameters above induces a linear map:
\defmap{\varphi}{\mathrm{Coh}_{\Lambda_1}(\CK(\G_{A^{\BR}}(V_1))) \otimes \cdots \otimes \mathrm{Coh}_{\Lambda_r}(\CK(\G_{A^{\BR}}(V_r)))}{\mathrm{Coh}_{\Lambda}(\CK(G))}{\Psi_{\gamma_1} \otimes \cdots \otimes \Psi_{\gamma_n}}{\Psi_{\varphi(\gamma_1,\cdots,\gamma_r)}}

\begin{prop}\label{Coh cong}
    The linear map $\varphi$ defined above is an isomorphism of representations of $W_{\Lambda}$.
\end{prop}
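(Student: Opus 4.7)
My plan has three stages: linearity/bijectivity, factorization of parameter data, and assembly via Theorem \ref{Coh}.

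First, note that $\varphi$ is automatically a linear isomorphism. By the Langlands classification (\cite[Theorem 6.5.12]{Vog81}), the family $\{\Psi_{\gamma_i}\}_{\gamma_i \in \CP_{\Lambda_i}(\G_{A^{\BR}}(V_i))}$ is a basis of $\mathrm{Coh}_{\Lambda_i}(\CK(\G_{A^{\BR}}(V_i)))$, so their tensor products form a basis of the domain; and $\{\Psi_\gamma\}_{\gamma \in \CP_\Lambda(G)}$ is a basis of the codomain. Proposition \ref{varphi bij} matches these two indexing sets, so $\varphi$ sends a basis to a basis.

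For $W_\Lambda$-equivariance, I would first verify on parameters that the cross action of $W_\Lambda = W(\Lambda_1) \times \cdots \times W(\Lambda_r)$ on $\CP_\Lambda(G)$ corresponds under $\varphi$ to the componentwise product of cross actions. This follows by direct inspection of the cross action formula: any parameter of the form
\begin{equation*}
    \varphi(\gamma_1, \ldots, \gamma_r) = (H_1 \times \cdots \times H_r, \, \xi_1 \times \cdots \times \xi_r, \, \Gamma_1 \otimes \cdots \otimes \Gamma_r)
\end{equation*}
has cross action decomposing componentwise because each $w_i \in W(\Lambda_i)$ only acts on the $i$-th block of $\xi$ and $\Gamma$. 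Consequently, for $\gamma = \varphi(\gamma_1,\ldots,\gamma_r)$, the stabilizer factors as $W_\gamma = W_{\gamma_1} \times \cdots \times W_{\gamma_r}$.

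Next, I would verify that the quartic character factors: $\sgn_\gamma = \sgn_{\gamma_1} \boxtimes \cdots \boxtimes \sgn_{\gamma_r}$. The key observation is that for a product Cartan $\fh = \fh_1 \oplus \cdots \oplus \fh_r$ sitting inside $\fg$, the imaginary roots are exactly the disjoint union of the imaginary roots of $\fh_i$ in $\fg_i$, because any root pairing different blocks pairs trivially with an element of $\ft_j$ for $j$ outside both blocks and cannot be purely imaginary once the block structure is taken into account; the analogous statement holds for compact imaginary roots. Hence the character $\sgn_{\mathrm{im}}$ on $W_{\fh,\ft}$ restricts coordinatewise, and $\sgn_\gamma$ factors.

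Finally, applying Theorem \ref{Coh} to each factor gives
\begin{equation*}
    \mathrm{Coh}_{\Lambda_i}(\CK(\G_{A^{\BR}}(V_i))) \cong \bigoplus_{\gamma_i} \Ind_{W_{\gamma_i}}^{W(\Lambda_i)} \sgn_{\gamma_i},
\end{equation*}
and applying it to $G$ gives a direct sum indexed by $W_\Lambda$-orbits in $\CP_\Lambda(G)$. Combining the cross-action compatibility, the stabilizer factorization, the sgn-character factorization, and the elementary isomorphism $\Ind_{H_1 \times H_2}^{K_1 \times K_2}(\sigma_1 \boxtimes \sigma_2) \cong \Ind_{H_1}^{K_1}\sigma_1 \boxtimes \Ind_{H_2}^{K_2}\sigma_2$, the $W_\Lambda$-module structures on both sides match orbit by orbit, and $\varphi$ realizes the isomorphism because it is defined precisely on the standard-module basis that underlies Theorem \ref{Coh}. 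The main obstacle will be the factorization of $\sgn_\gamma$: one must carefully check the root-theoretic structural claim above, since the definition of $\sgn_{\mathrm{im}}$ involves both the imaginary Weyl group $W(\Delta_{\fh,\mathrm{im}})$ and a choice of stabilizer of a positive system, and one has to ensure both ingredients respect the block decomposition.
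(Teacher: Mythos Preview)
Your argument establishes that both sides are \emph{abstractly} isomorphic as $W_\Lambda$-representations, but it does not show that the specific linear map $\varphi$ is $W_\Lambda$-equivariant. The gap is in the final sentence: ``$\varphi$ realizes the isomorphism because it is defined precisely on the standard-module basis that underlies Theorem~\ref{Coh}.'' Theorem~\ref{Coh} is only an abstract isomorphism $\mathrm{Coh}_\Lambda(\CK(G))\cong\bigoplus_\gamma\Ind_{W_\gamma}^{W_\Lambda}\sgn_\gamma$; it does not assert that the $W_\Lambda$-action on the basis $\{\Psi_\gamma\}$ is the permutation-with-sign action coming from cross action alone. In fact it is not: for a simple reflection $s_\alpha$ with $\alpha$ real or noncompact imaginary for $\gamma$, the formula for $s_\alpha\cdot\Psi_\gamma$ involves Cayley transforms and moves $\Psi_\gamma$ outside its cross-action orbit (see \cite[\S14]{Vog82}). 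So the span of a cross-action orbit need not be $W_\Lambda$-stable, and matching stabilizers and $\sgn$-characters orbit by orbit is insufficient to pin down $\varphi$ as the intertwiner.

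The paper's proof closes this gap by a different route: it verifies directly that the parameter bijection $\varphi$ is equivariant not only for the cross action but also for the Cayley transform, and then invokes the explicit description of the coherent continuation action on the standard basis from \cite[Definition~14.4]{Vog82}, which is expressed entirely in terms of these two operations. Since $\varphi$ respects both, it intertwines the $W_\Lambda$-actions on the nose. Your cross-action check is one of the two required ingredients; you are missing the Cayley-transform compatibility, and once you have it you can dispense with Theorem~\ref{Coh} and the $\sgn_\gamma$ factorization altogether.
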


\begin{proof}
    It is straightforward to verify that the bijection (\ref{3.12}) is also equivariant with respect to the cross action of $W_{\Lambda}$ and the operation known as the Cayley transformation, as defined in \cite[Chapter 8]{Vog81}. The proposition then follows from the explicit description of the coherent continuation representation with respect to the standard basis, as given in \cite[Definition 14.4]{Vog82}.
\end{proof}

Now for an irreducible representation $\sigma = \phi(\iota_1) \boxtimes \cdots \boxtimes \phi(\iota_r)$ of $W_{\Lambda} = \mathrm{S}_{e_1} \times \cdots \times \mathrm{S}_{e_r}$ we have 
\begin{align*}
[\sigma:&\mathrm{Coh}_{\Lambda}(\CK(G))] \\
 & = [\phi(\iota_1) \boxtimes \cdots \boxtimes \phi(\iota_r) : \mathrm{Coh}_{\Lambda_1}(\CK(\G_{A^{\BR}}(V_1))) \boxtimes \cdots \boxtimes \mathrm{Coh}_{\Lambda_r}(\CK(\G_{A^{\BR}}(V_r)))]\\
 & = [\phi(\iota_1): \mathrm{Coh}_{\Lambda_1}(\CK(\G_{A^{\BR}}(V_1)))]\cdots [\phi(\iota_r):\mathrm{Coh}_{\Lambda_r}(\CK(\G_{A^{\BR}}(V_r)))]\\
 & = \sharp(\mathrm{P}_{A^\BR}(\iota_1)) \cdots \sharp(\mathrm{P}_{A^\BR}(\iota_r)),
\end{align*}
and use Frobenius reciprocity
\begin{align*}
    [1_{W_\nu} : \sigma ] & = [\sigma: \Ind_{W_{\nu}}^{W}1]\\
    & = [\phi(\iota_1) \boxtimes \cdots \boxtimes \phi(\iota_r): \Ind_{W_{\blam_1}}^{W_1}1 \boxtimes \cdots \boxtimes \Ind_{W_{\blam_r}}^{W_r}1 ]\\
    & = [\phi(\iota_1):\Ind_{W_{\blam_1}}^{W_1}1] \cdots [\phi(\iota_r):\Ind_{W_{\blam_r}}^{W_r}1]\\
    & = \sharp\left(\A_{[d_{1,1},\cdots,d_{1,r_1}]}(\iota_1)\right)\cdots \sharp\left(\A_{[d_{k,1},\cdots,d_{r,k_r}]}(\iota_r)\right),
\end{align*}
where $W_i$ is the corresponding abstract Weyl groups on $^{a}\fh_i$.

For a fixed nilpotent orbit $\CO = \CO_{\iota} \in \bar{\Nil}(\fg^*)$, 
\begin{align*}
    \Irr_{\CO_{\iota}}(W_{\Lambda}) & = \Irr_{\CO_{\iota}}(\mathrm{S}_{e_1} \times \cdots \times \mathrm{S}_{e_r})\\
    & = \set{\sigma \in \Irr(\mathrm{S}_{e_1} \times \cdots \times \mathrm{S}_{e_r})}{j_{\mathrm{S}_{e_1} \times \cdots \times \mathrm{S}_{e_r}}^{\mathrm{S}_n}\sigma = \phi(\iota)}\\
    & = \set{(\iota_1,\cdots,\iota_r) \in \mathrm{YD}_{e_1} \times \cdots \times \mathrm{YD}_{e_r}}{\iota_1 \mathop{\sqcup}\limits^r \iota_2 \cdots \mathop{\sqcup}\limits^r  \iota_r = \iota(\CO)},
\end{align*}
here, the third equality is the consequence of Proposition \ref{2.9}.

Combining the above three identities, we obtain the second part of Theorem \ref{R}.

\subsection{Proof of Theorem \ref{H}}
\subsubsection{Integral case}
If $\nu$ is an integral character in $^{a}\fh^*$, i.e. 
$$\nu =  (\underbrace{\lambda_1, \cdots, \lambda_1}_{d_1}, \underbrace{\lambda_2, \cdots, \lambda_2}_{d_2}, \cdots, \underbrace{\lambda_k, \cdots, \lambda_k}_{d_k} ) \in {^{a}\fh^*} = \BC^n,$$ 
where $d_1 \geq d_2 \geq \cdots \geq d_k \geq 1$ is a partition of $n$ and $\lambda_i - \lambda_j \in \BZ$. 

There is only one conjugacy class of Cartan subgroups, a representative is given by $H$, which is isomorphic to
$$(\BC^\times)^{\frac{n}{2}}$$
with the corresponding weights $f_1, f_2, \cdots ,f_n$ in the standard representation, where
\begin{align}
    &f_{2i-1}(z_1,\cdots,z_{\frac{n}{2}}) = z_i,\\
    &f_{2i}(z_1,\cdots,z_{\frac{n}{2}}) = \bar{z_i}.
\end{align}
The set of roots is given by
$$\set{\pm(f_i - f_j)}{1\leq i < j \leq n}$$
with no real roots and the subset of imaginary roots is
$$\set{\pm(f_{2i-1}-f_{2i})}{1 \leq i \leq \frac{n}{2}}$$
they are all compact imaginary. 

Denote the complexified Lie algebra of $H$ by $\fh$, let $\xi \in W({^{a}\fh^*},\fh^*)$ be the unique element such that $\xi(e_i) = f_i$.

The real Weyl group $W_{H}$ is given by
$$W_{H} = \mathrm{S}_{\frac{n}{2}} \ltimes W(A_1)^{\frac{n}{2}},$$
where 
\begin{itemize}
    \item $\mathrm{S}_{\frac{n}{2}}$ is generated by $s_{f_{2i-1}-f_{2i+1}}s_{f_{2i}-f_{2i+2}}$;
    \item $W(A_1)^r$ is generated by $s_{f_{2i-1}-f_{2i}}$.
\end{itemize}

The quadratic character $\sgn_{\mathrm{im}}$ of $W_{H}$ is 
\begin{itemize}
    \item sign character on $W(A_1)^r$;
    \item trivial character on $\mathrm{S}_{\frac{n}{2}}$.
\end{itemize}

\begin{lem}\label{3.4}
For any Cartan subgroup $H'$ and any $\zeta \in W({^{a}\fh^*,\fh'^*})$, $\zeta(\nu) + \delta(\zeta)$ is always the differential of a continuous group character on $H'$.
\end{lem}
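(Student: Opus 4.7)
The proof parallels that of Lemma~2.1 but is simpler because $\GL_{n/2}(\BH)$ admits a unique $G$-conjugacy class of Cartan subgroups. I would begin by conjugating $H'$ into the standard Cartan $H \cong (\BC^\times)^{n/2}$ described above, so that without loss of generality $H' = H$. The space $W({^{a}\fh^*},\fh^*)$ is a torsor for the complex Weyl group $W_\fh \cong \mathrm{S}_n$, which acts by permuting coordinates; since the integrality condition on $\nu$ (namely $\nu_i - \nu_j \in \BZ$) is symmetric in $i,j$, and the cross action intertwines the formulas for $\zeta(\nu)$ and for $\delta(\zeta)$, it suffices to verify the claim for the canonical $\zeta = \xi$ determined by $\xi(e_i) = f_i$.

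With this reduction in hand, I would compute $\delta(\xi)$ directly from its definition. Every imaginary root of $(\fg,\fh)$ is compact, and the positive imaginary roots in $\xi\Delta^+$ are exactly $\{f_{2i-1} - f_{2i} : 1 \leq i \leq n/2\}$, giving
\[
\delta(\xi) \;=\; \tfrac{1}{2}\sum_{i=1}^{n/2}(f_{2i-1}-f_{2i}) \;-\; \sum_{i=1}^{n/2}(f_{2i-1}-f_{2i}) \;=\; -\tfrac{1}{2}\sum_{i=1}^{n/2}(f_{2i-1}-f_{2i}).
\]
Writing $\xi(\nu) = \sum_i \nu_i f_i$, the coefficients of $f_{2i-1}$ and $f_{2i}$ in $\xi(\nu) + \delta(\xi)$ are $\nu_{2i-1} - \tfrac{1}{2}$ and $\nu_{2i} + \tfrac{1}{2}$ respectively, so their difference equals $\nu_{2i-1} - \nu_{2i} - 1 \in \BZ$ for every $i$.

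To conclude, I would invoke the standard classification of continuous characters of the real Lie group $\BC^\times$: each such character has the form $z \mapsto z^a \bar{z}^b$ with $a,b \in \BC$ and $a-b \in \BZ$, and its differential pairs with $(f_{2i-1}, f_{2i})$ as $(a,b)$ on the $i$-th factor of $H = \prod_{i=1}^{n/2}\BC^\times$. The integrality of $\nu_{2i-1} - \nu_{2i} - 1$ just verified is exactly the condition $a - b \in \BZ$ on each factor, so $\xi(\nu) + \delta(\xi)$ is the differential of a continuous character of $H$. I do not anticipate a real obstacle here: the main point to be careful about is the legitimacy of the Weyl-group reduction in the first step, which reduces to the observation that the cross action of $W_\fh$ permutes the pairing structure $\{(2i-1,2i)\}$ coherently with the integer-difference condition.
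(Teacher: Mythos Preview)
Your proof is correct and follows essentially the same approach as the paper: reduce to the standard Cartan $H$ and the canonical $\xi$, compute $\delta(\xi) = -\tfrac{1}{2}(1,-1,\ldots,1,-1)$, and check that on each $\BC^\times$-factor the coefficient difference $\nu_{2i-1}-\nu_{2i}-1$ is an integer. Your write-up is in fact more careful than the paper's (which asserts $\nu_i \in \BZ$ rather than the weaker $\nu_i - \nu_j \in \BZ$ that is actually assumed and actually needed); the only caveat is that in your final remark the full $W_\fh \cong \mathrm{S}_n$ does \emph{not} preserve the pairing $\{(2i-1,2i)\}$, but this is harmless because for any $\zeta$ the imaginary roots are still $\{\pm(f_{2i-1}-f_{2i})\}$, so $\delta(\zeta)$ still has the form $-\tfrac{1}{2}\sum_i \epsilon_i(f_{2i-1}-f_{2i})$ and the same integer-difference check goes through.
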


\begin{proof}
    Without loss of generality, we may assume $H' = H$ and $\zeta = \xi$. Then, $\xi(\nu) + \delta(\xi) = (\nu_1,\cdots,\nu_n) - \frac{1}{2}(1,-1,1,-1,\cdots,1,-1)$, where we identify ${^{a}\fh^*}$ with $\fh'^*$ through $\xi$, and $\nu_i \in \BZ$. It always comes from a group character.
\end{proof}

Since the abstract Weyl group $W$ acts transitively on the set $W({^{a}\fh}^*,\fh^*)$, and $\Gamma$ in a character $\upgamma' = (H,\xi',\Gamma)$ is uniquely determined by $H$ and $\xi'$, the cross action of $W$ on $\CP_{\Lambda}(G)$ is transitive.  Choose a representative element $\upgamma = (H,\xi,\Gamma)$, where $\Gamma$ is the unique map determined by $H$ and $\xi$.

We can identify $W$ with $W_{\fh}$ through the isomorphism $\xi$, then the centralizer $W_{{\gamma}}$ of $\gamma = G \cdot \upgamma$ in $W$ can be identified with $W_{H} \subseteq W_{\fh}$.

Now using Theorem \ref{Coh}, we can give the coherent continuation representation as 
\begin{equation}
    \mathrm{Coh}_{\Lambda}(\CK(G)) = \Ind_{\mathrm{W}_{\frac{n}{2}}}^{\mathrm{S}_n} \epsilon.
\end{equation}

So for a given nilpotent orbit $\CO = \CO_{\iota}$, $\Irr(W;\CO) = \{\phi(\iota)\}$ is a singleton, we can use Pieri's rule \ref{Pieri}, and branching formula \ref{branch} again
\begin{equation}
    \sharp(\Irr_{\nu}(G;\CO)) = [\phi(\iota):\Ind_{W_\nu}^{W}1]\cdot[\phi(\iota): \Ind_{\mathrm{W}_{\frac{n}{2}}}^{\mathrm{S}_n}\epsilon] = \sharp(\A_{[d_1,\cdots,d_k]}(\iota))\cdot \sharp(\mathrm{P}_{A^\BH}(\iota)).
\end{equation}

And hence, we have proved the first part of Theorem \ref{H}.

\subsubsection{General case}
For the case of a possibly non-integral infinitesimal character, i.e. 
$$\nu = (\blam_1, \cdots, \blam_r) \in {^{a}\fh}^*,$$ 
where 
$$\blam_i = (\underbrace{\lambda_{i,1}, \cdots, \lambda_{i,1}}_{d_{i,1}},\cdots,\underbrace{\lambda_{i,k_i},\cdots,\lambda_{i,k_i}}_{d_{i,k_i}}) \in \BC^{e_i},$$ $d_{i,1} \geq \cdots \geq d_{i,k_i} \geq 1$ is a partition of $e_i$ and $\lambda_{i,p} - \lambda_{i,q} \in \BZ$, $\lambda_{i,1} - \lambda_{j,1} \notin \BZ$ for different $i,j$. The integral Weyl group is $W_{\Lambda} = W(\Lambda) = \mathrm{S}_{e_1} \times \cdots \times \mathrm{S}_{e_r} \subseteq \mathrm{S}_n$. 

If there exists any odd $e_i$, then $\xi(\nu) + \delta(\xi)$ cannot be the differential of a continuous character of $H$, as the coordinates with integral differences in a group character should appear in pairs. Consequently, there do not exist corresponding irreducible Casselman-Wallach representations. 

Now, we assume that all the $e_i$'s are even, fix a decomposition $V = V_1 \oplus \cdots \oplus V_r$ with $\dim V_{i} = e_i$ and each $V_i$ is stable under $\mathbf{j}$, it induces a subgroup $\G_{A^{\BH}}(V_1) \times \cdots \times \G_{A^{\BH}}(V_r)$ of $\G_{A^{\BH}}(V)$. Let $^{a}\fh_i$ be the abstract Cartan subalgebra of $\G_{A^{\BH}}(V_i)$, we have an identification
$${^{a}\fh_1^*} \times {^{a}\fh_2^*} \times \cdots \times {^{a}\fh_r^*} = \BC^{e_1} \times \BC^{e_2} \times \cdots \times \BC^{e_r} = \BC^n = {^{a}\fh^*},$$
under this identification $\blam_i \in {^{a}\fh_i^*}$, let  $Q_i \subseteq {^{a}\fh_i^*}$ be the analytical weight lattices of $\G_{A^{\BH}}(V_i)$ and $\Lambda_i := \lambda_i + Q_i \subseteq {^{a}\fh_i^*}$. There is also an identification of integral Weyl groups
$$W(\Lambda_1) \times W(\Lambda_2) \times \cdots \times W(\Lambda_r) = \mathrm{S}_{e_1} \times \cdots \times \mathrm{S}_{e_r} = W(\Lambda) = W_{\Lambda}.$$

As in Section \ref{3.2}, for each $i = 1,2,\cdots,r$, we define $\CP_{\Lambda_i}(\G_{A^{\BH}}(V_i))$ accordingly, and we can similarly define the following map
\defmap{\varphi}{\CP_{\Lambda_1}(\G_{A^{\BH}}(V_i))\times \cdots \times \CP_{\Lambda_r}(\G_{A^{\BH}}(V_r))}{\CP_{\Lambda}(G)}{((H_1,\xi_1,\Gamma_1), \cdots, (H_r,\xi_r,\Gamma_r))}{(H,\xi,\Gamma)}
using arguments analogous to those in Proposition \ref{varphi bij} and \ref{Coh cong}, we obtain the following results.
\begin{prop}
    The map $\varphi$ defined above bijective.
\end{prop}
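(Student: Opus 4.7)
The plan is to adapt the proof of Proposition \ref{varphi bij} to the quaternionic setting. The essential geometric input is that every triple $(H,\xi,\Gamma) \in \mathscr{P}_{\Lambda}(G)$ admits a $G$-conjugate in which $H$ sits inside the Levi subgroup $L = \G_{A^{\BH}}(V_1) \times \cdots \times \G_{A^{\BH}}(V_r)$. In the quaternionic case this is even cleaner than for $A^{\BR}$: the integral analysis above shows that $G = \GL_{n/2}(\BH)$ has a unique conjugacy class of Cartan subgroups, modelled by $(\BC^\times)^{n/2}$, and this model splits naturally along the block decomposition $V = V_1 \oplus \cdots \oplus V_r$ into $(\BC^\times)^{e_1/2} \times \cdots \times (\BC^\times)^{e_r/2}$. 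After such a conjugation we may assume $H = H_1 \times \cdots \times H_r$ with each $H_i$ a Cartan subgroup of $\G_{A^{\BH}}(V_i)$.

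Once the Cartan is aligned with the Levi, the next step is to show that $\xi$ respects the block decomposition, i.e.\ $\xi({^a\fh_i^*}) \subseteq \fh_i^*$, after possibly further adjusting by an element of the real Weyl group of $L$. The input here is that $\xi(\nu) + \delta(\xi)$ must exponentiate to a continuous character of $H$ (the analogue of Lemma \ref{3.4}), together with the non-integrality hypothesis $\lambda_{i,1} - \lambda_{j,1} \notin \BZ$ for $i \ne j$. The latter rules out any permutation in $W$ that mixes coordinates from distinct blocks, since such a mixing would force non-integral differences between coordinates that are paired as $(z,\bar z)$ under a single $\BC^\times$-factor and hence violate the integrality condition required for the differential to come from a group character. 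The compatibility requirements of Definition \ref{regular character} then force $\Gamma$ to factor as a tensor product $\Gamma_{1,\nu_1} \otimes \cdots \otimes \Gamma_{r,\nu_r}$, each factor satisfying Definition \ref{regular character} for the corresponding $\G_{A^{\BH}}(V_i)$.

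These decompositions define a two-sided inverse to $\varphi$, yielding both injectivity and surjectivity. More precisely, surjectivity is the content of the preceding paragraph, while injectivity follows by observing that any $G$-conjugation between two triples both adapted to $L$ must preserve the block decomposition of $V$ (which is intrinsic to the non-integrality pattern of $\Lambda$), hence normalizes $L$ and descends to $\G_{A^{\BH}}(V_i)$-conjugations on each factor. The main technical obstacle I expect is making the block-respecting property of $\xi$ fully rigorous: one needs to track how the quaternionic real structure on $V$ interacts with the permutation symmetries of the abstract Cartan and with the integrality constraints imposed by continuity of the lifted character on $H$, after which the remaining assertions reduce to direct verification as in the real case.
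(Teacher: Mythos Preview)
Your approach is correct and matches the paper's, which simply refers back to Proposition \ref{varphi bij} (conjugate $H$ into the Levi, then direct verification); your write-up supplies considerably more detail than the paper does. One small correction: the adjustment making $\xi$ block-diagonal requires conjugation by $N_G(H)$, i.e.\ the full real Weyl group $W_H$, not the real Weyl group of $L$---the latter already preserves the block decomposition of $\fh^*$ and so cannot rearrange the $\BC^\times$-factors of $H$ across blocks.
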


\begin{prop}
    There is a natural isomorphism of $W(\Lambda)$-representations
    \begin{equation}
        \varphi : \quad \mathrm{Coh}_{\Lambda_1}(\CK(\G_{A^{\BH}}(V_1))) \boxtimes \cdots \boxtimes \mathrm{Coh}_{\Lambda_r}(\CK(\G_{A^{\BH}}(V_r)))  \longrightarrow  \mathrm{Coh}_{\Lambda}(\CK(G)).
    \end{equation}
\end{prop}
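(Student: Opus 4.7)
The plan is to mimic exactly the argument used in the $A^{\BR}$ case (Propositions \ref{varphi bij} and \ref{Coh cong} of Section \ref{3.2}), transplanting it to the quaternionic setting. Since the previous proposition already establishes that the map $\varphi$ on parameter sets
\[
\CP_{\Lambda_1}(\G_{A^{\BH}}(V_1)) \times \cdots \times \CP_{\Lambda_r}(\G_{A^{\BH}}(V_r)) \longrightarrow \CP_{\Lambda}(G)
\]
is a bijection, the $W_\Lambda$-linear map on coherent families is well-defined on the basis of standard modules $\{\Psi_{\gamma_1} \boxtimes \cdots \boxtimes \Psi_{\gamma_r}\}$ by sending it to $\Psi_{\varphi(\gamma_1,\ldots,\gamma_r)}$, and is a linear isomorphism simply because it carries a basis to a basis. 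The content to verify is $W(\Lambda)$-equivariance.

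First I would observe that $W_\Lambda = W(\Lambda_1) \times \cdots \times W(\Lambda_r)$ acts on the left-hand side factor by factor via the usual action on each $\mathrm{Coh}_{\Lambda_i}(\CK(\G_{A^{\BH}}(V_i)))$, and on the right-hand side through the inclusion $W_\Lambda \subseteq W$ acting on $\mathrm{Coh}_{\Lambda}(\CK(G))$. Thus, invoking the explicit description of the coherent continuation representation on the basis of standard modules given in \cite[Definition 14.4]{Vog82}, it suffices to check two compatibilities at the level of parameters: (i) the bijection $\varphi$ intertwines the coordinate-wise cross action of $W_\Lambda$ on the product with the restricted cross action of $W_\Lambda \subseteq W$ on $\CP_\Lambda(G)$; and (ii) $\varphi$ commutes with Cayley transforms, which encode the non-diagonal matrix entries in the $W$-action on $\{\Psi_\gamma\}$.

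For (i), a parameter $\upgamma = \varphi(\upgamma_1,\ldots,\upgamma_r) = (H_1 \times \cdots \times H_r, \xi_1 \times \cdots \times \xi_r, \Gamma_{1,\bullet} \otimes \cdots \otimes \Gamma_{r,\bullet})$ has all its structure built blockwise, and the cross action formula
\[
w \times (H,\xi,\Gamma) = (H, \xi \circ w^{-1}, (\nu \mapsto \Gamma_\nu \otimes (\xi w^{-1}(\nu)+\delta(\xi w^{-1})-\xi(\nu)-\delta(\xi))))
\]
is manifestly multiplicative under such block decompositions once $w \in W_\Lambda$ is written as $w_1 \cdots w_r$ with $w_i \in W(\Lambda_i)$. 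For (ii), the Cayley transforms attached to the integral roots of $\Lambda$ are precisely the Cayley transforms attached to integral roots of some $\Lambda_i$ (since all roots lie inside a single block, as $\lambda_{i,1}-\lambda_{j,1}\notin\BZ$ for $i\neq j$), so they act inside a single tensor factor and commute with $\varphi$ by construction.

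The main potential obstacle is ensuring the bookkeeping of the characters $\delta(\xi)$ and $\mathrm{sgn}_{\mathrm{im}}$ really does factor through the block decomposition in the quaternionic case; here one uses that every Cartan subgroup of $G = \GL_{n/2}(\BH)$ is, up to conjugacy, a product of Cartans of the $\G_{A^\BH}(V_i)$'s (noting the parity condition that all $e_i$ are even is imposed precisely so that each block admits such a Cartan, compare Lemma \ref{3.4}). Once this is in place, the imaginary root system, its positive subsystem, and the compact imaginary subsystem all split as orthogonal unions over the blocks, so $\delta(\xi)$ and $\mathrm{sgn}_{\mathrm{im}}$ split accordingly; and hence the cross-action formula and the Cayley transform formulas both factor through $\varphi$, giving the desired $W_\Lambda$-equivariance.
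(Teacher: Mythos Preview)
Your proposal is correct and follows essentially the same approach as the paper, which simply says ``using arguments analogous to those in Proposition \ref{varphi bij} and \ref{Coh cong}'' and offers no further detail; you have unpacked precisely those arguments (cross-action equivariance, Cayley-transform compatibility, and the appeal to \cite[Definition 14.4]{Vog82}). One minor simplification you could note: in the quaternionic case there is a single conjugacy class of Cartan subgroups and all imaginary roots are compact, so there are no Cayley transforms to check and point (ii) is vacuous.
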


For a nilpotent orbit $\CO = \CO_{\iota}$, we obtain

\begin{align}
    \sharp(\Irr_{\nu}(G;\CO)) & = \sum_{\sigma \in \Irr(W_\Lambda;\CO)}[1_{W_\nu}:\sigma]\cdot[\sigma:\mathrm{Coh}_{\Lambda}(\CK(G))]\\
    & = \sum_{\substack{(\iota_1,\cdots,\iota_r) \in \mathrm{YD}_{e_1} \times \cdots \times \mathrm{YD}_{e_r} \\ \iota_1 \mathop{\sqcup}\limits^r \cdots  \mathop{\sqcup}\limits^r \iota_r = \iota(\CO)}}\prod_{i=1}^r [\phi(\iota_i): \Ind_{W_{\blam_i}}^{W_i}1]\cdot \prod_{i=1}^r[\phi(\iota_i):\mathrm{Coh}_{\Lambda_i}(\CK(\G_{A^{\BH}}(V_i)))]\\
    & = \sum_{\substack{(\iota_1,\cdots,\iota_r) \in \mathrm{YD}_{e_1} \times \cdots \times \mathrm{YD}_{e_r} \\ \iota_1 \mathop{\sqcup}\limits^r \cdots  \mathop{\sqcup}\limits^r \iota_r = \iota(\CO)}} \prod_{i=1}^{r}\sharp\left(\mathrm{A}_{[d_{i,1}, \cdots,d_{i,k_i}]}(\iota_i)\right) \cdot \prod_{i=1}^r \sharp\left(\mathrm{P}_{A^\BH}(\iota_i)\right)\\
    & = \sum_{\substack{(\iota_1,\cdots,\iota_r) \in \mathrm{YD}_{e_1} \times \cdots \times \mathrm{YD}_{e_r} \\ \iota_1 \mathop{\sqcup}\limits^r \cdots  \mathop{\sqcup}\limits^r \iota_r = \iota(\CO)}} \prod_{i=1}^r \sharp\left(\Irr_{\blam_i}\left(\GL_{\frac{e_i}{2}}(\BH);\CO_{\iota_i}\right)\right).
\end{align}

Thus, the second part of Theorem \ref{H} follows.

\subsection{Proof of Theorem \ref{C}}

\subsubsection{Integral case}
If $\nu$ is an integral character in $^{a}\fh^*$, i.e. 
$$\nu =  (\underbrace{\lambda_1, \cdots, \lambda_1}_{d_1}, \cdots, \underbrace{\lambda_k, \cdots, \lambda_k}_{d_k}, \underbrace{\lambda_1', \cdots, \lambda_1'}_{d_1'}, \cdots, \underbrace{\lambda_k', \cdots, \lambda_k'}_{d_{l}'} ) \in {^{a}\fh^*} = \BC^n \times \BC^n,$$ 
where $[d_1,d_2,\cdots,d_k]$ and $[d_1',d_2',\cdots,d_l']$ are partitions of $n$, and $\lambda_i - \lambda_j, \ \lambda_i' - \lambda_j' \in \BZ$, then $W_{\Lambda} =W(\Lambda) = W = \mathrm{S}_n \times \mathrm{S}_n$, and $W_\nu = (\mathrm{S}_{d_1} \times \cdots \times \mathrm{S}_{d_k}) \times (\mathrm{S}_{d_1'} \times \cdots \times \mathrm{S}_{d_l'})$. 

There is only one conjugacy class of Cartan subgroups, a representative is given by $H$, which is isomorphic to
$$(\BC^\times)^n$$
with the corresponding weights $f_1,f_2,\cdots,f_n$ and $f_1',f_2',\cdots,f_n'$ in the standard representation, where
\begin{align}
    & f_i(z_1,z_2,\cdots,z_n) = z_i\\
    & f_i'(z_1,z_2,\cdots,z_n) = \bar{z_i}.
\end{align}
Then the set of roots is 
$$\set{\pm(f_i - f_j), \ \pm(f_i'-f_j')}{1\leq i< j \leq n}$$
all of them are complex roots.

Denote the complexified Lie algebra of $H$ by $\fh$, fix a $\xi \in W({^{a}\fh^*},\fh^*)$ such that $\xi(e_i) = f_i$ and $\xi(e_i') = f_i'$.

The real Weyl group of $H$ is given by
$$W_H = \mathrm{S}_n,$$
which is generated by $s_{f_i-f_{i+1}}s_{f_i'-f_{i+1}'}$. 

The quadratic character $\sgn_{\mathrm{im}}$ of $W_{H}$ is the trivial character.

\begin{lem}\label{3.7}
    For any Cartan subgroup $H'$ and any $\zeta \in W({^{a}\fh^*,\fh'^*})$, $\zeta(\nu) + \delta(\zeta)$ is the differential of a continuous character of $H'$ if and only if $\lambda_1 - \lambda_1' \in \BZ$.
\end{lem}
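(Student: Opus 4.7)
The plan is to reduce the condition to a direct computation in the character lattice of the split Cartan $H$. First I would observe that every Cartan subgroup of $G = \GL_n(\BC)$, viewed as a real Lie group, is $G$-conjugate to the fixed $H \cong (\BC^\times)^n$; since such conjugation carries $\zeta(\nu) + \delta(\zeta)$ covariantly to the corresponding object for $H$ and preserves whether it lifts to the differential of a continuous character, I may assume $H' = H$. Next, because $\Delta_{\fh}$ consists entirely of complex roots, the set of imaginary roots in $\zeta \Delta^+$ is empty for every $\zeta$, so $\delta(\zeta) = 0$. The question therefore reduces to deciding when $\zeta(\nu) \in \fh^*$ lies in the lattice of differentials of continuous characters of $H$.

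Then I would make that lattice explicit. A continuous character of $\BC^\times$ has the form $z \mapsto z^a \bar z^b$ with $a, b \in \BC$ and $a - b \in \BZ$, and its differential on the complexified Lie algebra is $a f + b f'$, where $f, f'$ denote the two weights of the standard representation. Consequently the character lattice of $H$, embedded in $\fh^*$, is
\[
    X^*(H) = \set{\sum_{i=1}^n (a_i f_i + b_i f_i')}{a_i, b_i \in \BC,\ a_i - b_i \in \BZ \text{ for every } i}.
\]

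Finally, I would parametrize $\zeta$ through the Weyl group: every $\zeta \in W({^a\fh^*}, \fh^*)$ has the form $\xi \circ w^{-1}$ for a unique $w = (\sigma, \sigma') \in W = \mathrm{S}_n \times \mathrm{S}_n$. Writing $\nu = (\mu_1, \ldots, \mu_n, \mu'_1, \ldots, \mu'_n)$, one computes
\[
    \zeta(\nu) = \sum_{i=1}^n \mu_{\sigma(i)} f_i + \sum_{i=1}^n \mu'_{\sigma'(i)} f'_i,
\]
which lies in $X^*(H)$ if and only if $\mu_{\sigma(i)} - \mu'_{\sigma'(i)} \in \BZ$ for every $i$. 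By the integrality hypothesis on each block, $\mu_j \in \lambda_1 + \BZ$ and $\mu'_j \in \lambda'_1 + \BZ$ for every $j$, so this condition is equivalent to $\lambda_1 - \lambda'_1 \in \BZ$, uniformly in $\zeta$ and $H'$. The only nontrivial step is keeping the various identifications of ${^{a}\fh^*}$, $\fh^*$, and $X^*(H)$ straight; once those are fixed, the argument is parallel to Lemmas \ref{gp char} and \ref{3.4}.
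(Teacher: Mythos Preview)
Your proof is correct and follows essentially the same approach as the paper's: reduce to the unique Cartan $H$, observe that $\delta(\zeta)=0$ because all roots are complex, and read off the integrality condition from the character lattice of $(\BC^\times)^n$. Your version is in fact more careful than the paper's, which simply assumes $\zeta=\xi$ without explicitly noting why the condition is independent of the Weyl-group element; you make that step explicit by parametrizing all $\zeta$ and checking uniformly.
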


\begin{proof}
    Without loss of generality, we may assume that $H' = H$ and $\zeta = \xi$. Then, $\xi(\nu) + \delta(\xi) = \xi(\nu) = (\lambda_1,\cdots,\lambda_n,\lambda_1',\cdots,\lambda_n')$, where we identify ${^{a}\fh^*}$ with $\fh'^*$ through $\xi$. It comes from a group character if and only if $\lambda_i - \lambda_i' \in \BZ$ for any $1 \leq i \leq n$, which is equivalent to $\lambda_1 - \lambda_1' \in \BZ$.
\end{proof}

So, if $\lambda_1 - \lambda_1' \notin \BZ$. There will be no irreducible representation of this infinitesimal character.  

Now, suppose that $\lambda_1 - \lambda_1' \in \BZ$. In this case, $W$ acts transitively on $\CP_{\Lambda}(G)$, we choose the representative element $\upgamma = (H,\xi, \Gamma)$, where $\Gamma$ is determined by $\xi$ since $H$ is connected.

We can identify $W$ with $W_\fh$ through the isomorphism $\xi$, then the centralizer $W_{{\gamma}}$ of ${\gamma} = G \cdot \upgamma \in \CP_{\Lambda}(G)$ in $W$ is identified with the real Weyl group $W_{H} = \mathrm{S}_n \subseteq \mathrm{S}_n \times \mathrm{S}_n = W_{\fh}$, the inclusion is diagonal embedding.

Now, using Theorem \ref{Coh}, we can give the coherent continuation representation as 
\begin{equation}
    \mathrm{Coh}_{\Lambda}(\CK(G)) = \Ind^{\mathrm{S}_n \times \mathrm{S}_n}_{\mathrm{S}_n}1.
\end{equation}

Given a nilpotent orbit $\CO = \CO_{\iota} \times \CO_{\iota'}$, $\Irr(W,\CO) = \{\phi(\iota) \boxtimes \phi(\iota')\}$ is a singleton, so
\begin{align}
    \sharp(\Irr_{\nu}(G;\CO)) & = [1_{W_\nu}:\phi(\iota) \boxtimes \phi(\iota')]\cdot[\phi(\iota)\boxtimes \phi(\iota'):\Ind^{\mathrm{S}_n \times \mathrm{S}_n}_{\mathrm{S}_n}1]\\
    & = [\phi(\iota)\boxtimes \phi(\iota'):\Ind_{W_\nu}^{W}1] \cdot [1_{\mathrm{S}_n}:\phi(\iota) \otimes \phi(\iota)]\\
    & = \sharp\left(\A_{[d_1,\cdots,d_k]}(\iota)\right) \cdot \sharp\left(\A_{[d_1',\cdots,d_l']}(\iota')\right) \cdot \epsilon_{\iota,\iota'},
\end{align}
the last equation is because all irreducible representations of symmetry groups are self-dual.

And hence, we have proved the first part of Theorem \ref{C}.

\subsubsection{General case}
For the case of a possibly non-integral infinitesimal character, i.e. 
$$\nu = (\blam_1, \cdots, \blam_r, \blam_1',\cdots, \blam_s') \in {^{a}\fh} = \BC^n \times \BC^n,$$ 
where 
\begin{align}
    &\blam_i = (\underbrace{\lambda_{i,1}, \cdots,\lambda_{i,1}}_{d_{i,1}},\cdots,\underbrace{\lambda_{i,k_i},\cdots,\lambda_{i,k_i}}_{d_{i,k_i}}) \in \BC^{e_i}\\
    &\blam_j' = (\underbrace{\lambda_{j,1},\cdots,\lambda_{j,1}}_{d_{j,1}'}, \cdots ,\underbrace{\lambda_{j,l_j},\cdots, \lambda_{j,l_j}}_{d_{j,l_j}'}) \in \BC^{e_j'},
\end{align}
 $[d_{i,1} , \cdots , d_{i,k_i}]$ is a partition of $e_i$, $[d_{j,1}', \cdots , d_{j,l_j}']$ is a partition of $e_j'$, and $\lambda_{i,p} - \lambda_{i,q} \in \BZ$, $\lambda_{i,1} - \lambda_{j,1}, \ \lambda_{i,1}' - \lambda_{j,1}' \notin \BZ$ for different $i,j$. The integral Weyl group is $W_\Lambda = W(\Lambda) = (\mathrm{S}_{e_1} \times \cdots \times \mathrm{S}_{e_r}) \times (\mathrm{S}_{e_1} \times \cdots \times \mathrm{S}_{e_r}) \subseteq \mathrm{S}_n \times \mathrm{S}_n = W$.

If there exist $\xi \in W({^{a}\fh^*},\fh^*)$ such that $\xi(\nu) + \delta(\xi)$ becomes the differential of a character of the group $H$, there must be a permutation of $\nu$ under $W$, such that $r = s$, $e_i = e_i'$ for all $i = 1,2,\cdots,r$, and $\lambda_{i,1} - \lambda_{i,1}' \in \BZ$, otherwise there will be no irreducible Casselman-Wallach representations with infinitesimal character $\nu$.

Now, we assume that $r = s$, $e_i = e_i'$ for each $i =1,2,\cdots,r$, and $\lambda_{i,1} - \lambda_{i,1}' \in \BZ$. Fix a decomposition $V = W \oplus W' = (W_1 \oplus W_1') \oplus \cdots \oplus (W_r \oplus W_r') = V_1 \oplus \cdots \oplus V_r$, where $\dim W_i = \dim W_i' = e_i$ and $j(W_i) = W_i'$, this decomposition corresponds to a subgroup $\G_{A^{\BC}}(V_1) \times \G_{A^{\BC}}(V_2) \times \cdots \times \G_{A^{\BC}}(V_r)$ of $G$. Let $^{a}\fh_i$ denote the abstract Cartan subalgebra of $\G(V_i)$, we have an identification
$${^{a}\fh_1^*} \times {^{a}\fh_2^*} \times \cdots \times {^{a}\fh_r^*} = \BC^{e_1} \times \BC^{e_1} \times \BC^{e_2} \times \BC^{e_2} \times \cdots \times \BC^{e_r} \times \BC^{e_r} = \BC^n \times \BC^n = {^{a}\fh^*},$$
under this identification $\blam_i \in {^{a}\fh_i^*}$, let  $Q_i \subseteq {^{a}\fh_i^*}$ be the analytical weight lattices of $\G_{A^{\BC}}(V_i)$ and $\Lambda_i := \lambda_i + Q_i \subseteq {^{a}\fh_i^*}$. There is also an identification of integral Weyl groups
$$W(\Lambda_1) \times W(\Lambda_2) \times \cdots \times W(\Lambda_r) = \mathrm{S}_{e_1} \times \mathrm{S}_{e_1} \times \cdots \times \mathrm{S}_{e_r} \times \mathrm{S}_{e_r} = W(\Lambda) = W_{\Lambda}.$$

As in Section \ref{3.2}, for each $i = 1,2,\cdots,r$, we define $\CP_{\Lambda_i}(\G_{A^{\BC}}(V_i))$ accordingly, and we can similarly define the following map
\defmap{\varphi}{\CP_{\Lambda_1}(\G_{A^{\BC}}(V_i))\times \cdots \times \CP_{\Lambda_r}(\G_{A^{\BC}}(V_r))}{\CP_{\Lambda}(G)}{((H_1,\xi_1,\Gamma_1),\cdots, (H_r,\xi_r,\Gamma_r))}{(H,\xi,\Gamma)}
using arguments analogous to those in Proposition \ref{varphi bij} and \ref{Coh cong}, we obtain the following results.
\begin{prop}
    The map $\varphi$ defined above is bijective.
\end{prop}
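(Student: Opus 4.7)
The plan is to mimic the strategy of Proposition \ref{varphi bij} (the $A^{\BR}$ case), with the only additional bookkeeping coming from the doubled structure ${^{a}\fh^*} = \BC^n \times \BC^n$ of the $A^{\BC}$ case. The point is that the decomposition $V = V_1 \oplus \cdots \oplus V_r$ is canonically recoverable from any parameter $(H,\xi,\Gamma) \in \mathscr{P}_{\Lambda}(G)$, because the blocks $\blam_i$ (and the $\blam_i'$) have pairwise non-integral entries across distinct $i$.

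For surjectivity, I would start with an arbitrary $(H,\xi,\Gamma) \in \mathscr{P}_{\Lambda}(G)$. Since $G = \GL_n(\BC)$ has a unique conjugacy class of Cartan subgroups, I can conjugate $H$ into the Levi subgroup $\G_{A^{\BC}}(V_1) \times \cdots \times \G_{A^{\BC}}(V_r)$, obtaining a factorization $H = H_1 \times \cdots \times H_r$ with each $H_i$ a Cartan subgroup of $\G_{A^{\BC}}(V_i)$. The abstract Cartan identification ${^{a}\fh^*} = {^{a}\fh_1^*} \times \cdots \times {^{a}\fh_r^*}$ then forces $\xi$ to split as $\xi_1 \times \cdots \times \xi_r$ on the level of blocks, because the eigenvalues of $\xi(\nu)$ on $\fh_i$ lie in $\blam_i + \BZ^{e_i}$ (and analogously for the conjugate block), and these cosets are disjoint for distinct $i$. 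Because $H$ is connected, each $\Gamma_\nu \in \Irr(H)$ is one-dimensional and determined by its differential, so it decomposes as $\Gamma_{1,\nu_1} \boxtimes \cdots \boxtimes \Gamma_{r,\nu_r}$ with each $\Gamma_{i,\bullet}$ satisfying the conditions of Definition \ref{regular character} relative to $\Lambda_i$. This yields a preimage tuple.

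For injectivity, suppose two tuples $((H_i,\xi_i,\Gamma_i))_i$ and $((H_i',\xi_i',\Gamma_i'))_i$ are sent to $G$-conjugate parameters, say by an element $g \in G$. The non-integrality $\lambda_{i,1} - \lambda_{j,1} \notin \BZ$ and $\lambda_{i,1}' - \lambda_{j,1}' \notin \BZ$ for $i \neq j$ implies that any $G$-conjugation carrying one infinitesimal-character packet to the other must preserve the block decomposition $V = V_1 \oplus \cdots \oplus V_r$ up to an automorphism of the indexing, and since the $\blam_i$'s are pairwise distinguishable, $g$ must preserve each $V_i$ individually. Thus $g$ lies in the Levi subgroup, and its restrictions supply $\G_{A^{\BC}}(V_i)$-conjugations between $(H_i,\xi_i,\Gamma_i)$ and $(H_i',\xi_i',\Gamma_i')$ for each $i$.

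The main bookkeeping obstacle is the matched pairing between the two $\BC^n$ factors in ${^{a}\fh^*}$: the standing hypotheses $r = s$, $e_i = e_i'$, and $\lambda_{i,1} - \lambda_{i,1}' \in \BZ$ (cf.\ Lemma \ref{3.7}) are exactly what guarantees that the block decomposition is consistent across both factors of $V = W \oplus W'$ and that a character of the product Cartan with the required differential actually exists. Once these conditions are recorded, the two halves of the verification above are routine and parallel the $A^{\BR}$ and $A^{\BH}$ cases already treated.
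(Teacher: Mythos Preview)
Your proposal is correct and follows essentially the same approach as the paper, which simply states that the result follows by arguments analogous to Proposition~\ref{varphi bij} (whose proof is just ``conjugate $H$ into the Levi subgroup and verify directly''). You have supplied the details of that analogy, including the extra bookkeeping for the doubled Cartan ${^{a}\fh^*} = \BC^n \times \BC^n$ and the role of the standing hypotheses $r=s$, $e_i=e_i'$, $\lambda_{i,1}-\lambda_{i,1}'\in\BZ$; there is nothing to add.
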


\begin{prop}
    There is a natural isomorphism of $W(\Lambda)$-representations
    \begin{equation}
        \varphi : \quad   \mathrm{Coh}_{\Lambda_1}(\CK(\G_{A^{\BC}}(V_1))) \boxtimes \cdots \boxtimes \mathrm{Coh}_{\Lambda_r}(\CK(\G_{A^{\BC}}(V_r)))  \longrightarrow \mathrm{Coh}_{\Lambda}(\CK(G)).
    \end{equation}
\end{prop}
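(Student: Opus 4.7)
The plan is to mirror the argument already used in the $A^{\BR}$ case (Proposition \ref{Coh cong}). Having just established that $\varphi$ is a bijection between parameter sets, the idea is to promote this to an isomorphism of coherent continuation representations by checking compatibility with both the cross action and the Cayley transform, and then invoke Vogan's explicit description of the $W_{\Lambda}$-action on $\mathrm{Coh}_{\Lambda}(\CK(G))$ with respect to the standard basis $\{\Psi_{\gamma}\}$.

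First I would use the bijection $\varphi: \prod_{i=1}^{r}\CP_{\Lambda_i}(\G_{A^{\BC}}(V_i)) \to \CP_{\Lambda}(G)$ to define $\varphi$ on Grothendieck groups by sending the pure tensor $\Psi_{\gamma_1}\otimes\cdots\otimes\Psi_{\gamma_r}$ to $\Psi_{\varphi(\gamma_1,\ldots,\gamma_r)}$; this is an isomorphism of complex vector spaces since both sides carry bases indexed by the matched parameter sets. Next I would verify that $\varphi$ is equivariant for the cross action: given $w = (w_1,\ldots,w_r) \in W_{\Lambda_1}\times\cdots\times W_{\Lambda_r} = W_{\Lambda}$ and a tuple $(\upgamma_1,\ldots,\upgamma_r)$ with $\upgamma_i = (H_i,\xi_i,\Gamma_i)$, a direct comparison of Definition of the cross action shows that $w\times \varphi(\upgamma_1,\ldots,\upgamma_r)$ is represented by $(H_1\times\cdots\times H_r,\, (\xi_1\circ w_1^{-1})\times\cdots\times(\xi_r\circ w_r^{-1}),\, \Gamma')$, where $\Gamma'_{\nu}$ factors as the tensor product of the individual twisted characters. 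This matches $\varphi(w_1\times\upgamma_1,\ldots,w_r\times\upgamma_r)$, since $\delta$ of a block-diagonal isomorphism is the direct sum of the $\delta$'s of its blocks in the $A^{\BC}$ case (all roots being complex, $\delta(\xi) = 0$ in fact, which makes this bookkeeping even easier than in the $A^{\BR}$ case).

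After cross-action equivariance, I would check equivariance under Cayley transforms in each factor separately. Since for $\star = A^{\BC}$ there is only one conjugacy class of Cartan subgroup in each $\G_{A^{\BC}}(V_i)$ (a complex torus), Cayley transforms reduce to trivial operations within each factor; what must still be checked is that the tensor factorization respects the Cayley formulas expressing $\Psi_{\gamma}$ in terms of other $\Psi$'s in the Grothendieck group. This is immediate because all the combinatorial data in \cite[Definition 14.4]{Vog82} split across the block decomposition $\G_{A^{\BC}}(V) \supseteq \prod_i \G_{A^{\BC}}(V_i)$.

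The main potential obstacle is not conceptual but bookkeeping: ensuring that the identification of root data, positive systems, and $\delta$-shifts on $\prod_i {^{a}\fh}_i^*$ versus ${^{a}\fh}^*$ really are compatible under the chosen flags of Section \ref{3.1}, and that the cross action on the global parameter decomposes as an external product of cross actions on the blocks. Once this is verified, the proposition follows verbatim from the description of the coherent continuation representation on the basis $\{\Psi_{\gamma}\}$ in \cite[Definition 14.4]{Vog82}, exactly as in the proof of Proposition \ref{Coh cong}, so I would simply refer the reader to that earlier argument for the remaining routine verifications.
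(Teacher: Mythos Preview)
Your proposal is correct and follows exactly the same approach as the paper, which simply states that the result follows ``using arguments analogous to those in Proposition \ref{varphi bij} and \ref{Coh cong}.'' If anything, you supply more detail than the paper does, correctly noting that in the $A^{\BC}$ case all roots are complex (so $\delta(\xi)=0$) and that each $\G_{A^{\BC}}(V_i)$ has a single Cartan conjugacy class, which trivializes the Cayley-transform compatibility check.
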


For a nilpotent orbit $\CO = \CO_{\iota} \times \CO_{\iota'}$, we obtain

\begin{align}
    \sharp(\Irr_{\nu}(G;\CO)) & = \sum_{\sigma \in \Irr(W_{\Lambda};\CO)}[1_{W_\nu}:\sigma]\cdot[\sigma: \mathrm{Coh}_{\Lambda}(\CK(G))]\\
    & = \sum_{\substack{(\iota_1,\cdots,\iota_r), (\iota_1',\cdots,\iota_r') \in \mathrm{YD}_{e_1} \times \cdots \times \mathrm{YD}_{e_r}\\ \iota_1 \mathop{\sqcup}\limits^r \cdots \mathop{\sqcup}\limits^r \iota_r = \iota, \ \iota_1'  \mathop{\sqcup}\limits^r \cdots \mathop{\sqcup}\limits^r \iota_r' = \iota' }} \prod_{i=1}^{r} \sharp\left(\Irr_{\blam_i}(\GL_{e_i}(\BC);\CO_{\iota_i} \times \CO_{\iota_i'})\right).
\end{align}

Thus, the second part of Theorem \ref{C} follows.

\subsection{Proof of Theorem \ref{U}}

\subsubsection{Integral case}
If $\nu$ is an integral character in ${^{a}\fh}^*$, i.e. 
$$\nu = (\underbrace{\lambda_1, \cdots, \lambda_1}_{d_1}, \underbrace{\lambda_2, \cdots, \lambda_2}_{d_2}, \cdots, \underbrace{\lambda_k, \cdots, \lambda_k}_{d_k} ) \in {^{a}\fh}^* = \BC^n,$$ 
where $\lambda_i - \lambda_j \in \BZ$. In this case $W_{\Lambda} = W(\Lambda) = W$, $W_\nu = \mathrm{S}_{d_1} \times \cdots \times \mathrm{S}_{d_k}$.

The set of conjugacy classes of Cartan subgroups is parameterized by non-negative integers in $\{0,1,\cdots,\lfloor\frac{\mathrm{min}(p,q)}{2}\rfloor\}$. Fix a parameter $s$, a representative element of the conjugacy class is given by a Cartan subgroup $H_s$, which is isomorphic to 
$$(\BC^{\times})^s \times (\BS^1)^{p-s} \times (\BS^1)^{q-s}$$
with the corresponding weights in the standard representation $f_1,\cdots,f_n$ where
\begin{align}
    & f_{i}(z_1,\cdots,z_s,a_1,\cdots,a_{p-s},b_1,\cdots,b_{q-s}) = z_i, & \textrm{if $i \leq s$};\\
    & f_{i}(z_1,\cdots,z_s,a_1,\cdots,a_{p-s},b_1,\cdots,b_{q-s}) = \bar{z_i}^{-1}, & \textrm{if $s+1 \leq i \leq 2s$};\\
    &f_i(z_1,\cdots,z_s,a_1,\cdots,a_{p-s},b_1,\cdots,b_{q-s}) = a_{i-2s}, & \textrm{if $2s+1 \leq i \leq s+p$};\\
    &f_i(z_1,\cdots,z_s,a_1,\cdots,a_{p-s},b_1,\cdots,b_{q-s}) = b_{i-s-p}, & \textrm{if $s+p+1 \leq i \leq n$}.
\end{align}
The set of roots is 
$$\set{\pm(f_i-f_j)}{1\leq i <j \leq n}$$
there are no real roots, and the subset of imaginary roots is
$$\set{\pm(f_i-f_j)}{2s+1 \leq i < j \leq n}$$
moreover, the subset of compact imaginary roots is
$$\set{\pm(f_i-f_j)}{2s+1 \leq i < j \leq s+p \  \textrm{or} \ s+p+1 \leq i<j \leq n }$$
Denote the complexified Lie algebra of $H_s$ by $\fh_s$. 

The real Weyl group $W_{H_s}$ is given by
$$W_{H_s} = (\mathrm{S}_s \ltimes  (W(A_1))^s) \times \mathrm{S}_{p-s} \times \mathrm{S}_{q-s},$$

\begin{itemize}
    \item $\mathrm{S}_s$ is generated by $s_{f_{2i-1}-f_{2i+1}}s_{f_{2i}-f_{2i+2}}$ for $1 \leq i \leq s-1$;
    \item $W(A_1)^r$ is generated by $s_{f_{2i-1}-f_{2i}}$ for $1 \leq i \leq s$ which is the Weyl group of real root system;
    \item $\mathrm{S}_{p-s}$ (resp. $\mathrm{S}_{q-s}$) is generated by $s_{f_{i}-f_{i+1}}$ for $2r+1 \leq i \leq s+p-1$ (resp. $s+p+1 \leq i \leq n-1$).
\end{itemize}

The quadratic character $\sgn_{\mathrm{im}}$ of $W_{H_{s}}$ is
\begin{itemize}
    \item sign character on $\mathrm{S}_{p-s}$ and $\mathrm{S}_{q-S}$;
    \item trivial character on $\mathrm{S}_s \ltimes  (W(A_1))^s$.
\end{itemize}

Let $\CP_s$ denote the set of parameters in $\CP_{\Lambda}(G)$ which are represented by triples of the form $(H,\xi,\Gamma)$, where $H$ conjugate to $H_s$.

\begin{lem}
    \begin{itemize}
        \item If $\lambda_i \in \frac{n-1}{2} + \BZ$, for any Cartan subgroup $H$ and $\zeta \in W({^{a}\fh^*},\fh^*)$, $\zeta(\nu) + \delta(\zeta) \in {^{a}\fh^*}$ is always the differential of a continuous character of $H$;
        \item If $\lambda_i \in \frac{n}{2} + \BZ$, for a Cartan subgroup $H$, there exists a $\zeta \in W({^{a}\fh^*},\fh^*)$ such that $\zeta(\nu) + \delta(\zeta)$ is the differential of a continuous character of $H$ if and only if $p=q$ and $H$ conjugate to $H_p$;
        \item If $\lambda \notin \frac{1}{2}\BZ$, for any Cartan subgroup $H$ and $\zeta \in W({^{a}\fh^*},\fh^*)$, $\zeta(\nu) + \delta(\zeta) \in {^{a}\fh^*}$ is never the differential of a continuous character of $H$.
    \end{itemize}
\end{lem}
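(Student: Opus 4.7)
The plan is to reduce to a canonical representative $H_s$ of each Cartan conjugacy class together with the natural $\xi_s \in W({^{a}\fh^*},\fh_s^*)$ sending $e_i \mapsto f_i$, then perform the explicit case analysis via $\xi_s(\nu) + \delta(\xi_s)$, as in the proofs of the analogous lemmas above. Since every coordinate of $\nu$ lies in $\lambda_1 + \BZ$ and different choices of $\zeta \in W({^{a}\fh^*},\fh_s^*)$ modify $\delta$ only by integer linear combinations of imaginary roots, the condition on $\zeta(\nu) + \delta(\zeta)$ depends only on $s$ and the residue of $\lambda_1$ modulo $\BZ$, and it is enough to work with $\xi_s$.

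Next I would compute $\delta(\xi_s)$ explicitly. The imaginary roots for $H_s$ are $\pm(f_i - f_j)$ for $2s+1 \le i < j \le n$, and among these the compact ones are those with $i,j$ both in $\{2s+1, \ldots, s+p\}$ or both in $\{s+p+1, \ldots, n\}$. Computing $\rho_{\mathrm{im}} - 2\rho_{\mathrm{cim}}$ directly, one finds $\delta(\xi_s)_i = 0$ for $1 \le i \le 2s$, the arithmetic sequence $(q-p+2k-1)/2$ for $k = 1, \ldots, p-s$ in the first block of positions, and the arithmetic sequence $(2l-n+2s-1)/2$ for $l = 1, \ldots, q-s$ in the second block. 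A parity check shows all entries in positions $2s+1, \ldots, n$ lie in the coset $\frac{n-1}{2} + \BZ$.

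I would then identify the differentials of continuous characters of $H_s \cong (\BC^\times)^s \times (\BS^1)^{n-2s}$ in the $f_i$-coordinates. Since on the $i$-th $\BC^\times$-factor the weights $f_i$ and $f_{i+s}$ correspond to $z_i$ and $\bar{z}_i^{-1}$, a weight $\mu = (\mu_1, \ldots, \mu_n)$ is a character differential iff $\mu_i + \mu_{i+s} \in \BZ$ for every $1 \le i \le s$ and $\mu_i \in \BZ$ for every $2s+1 \le i \le n$. Plugging in $\mu = \xi_s(\nu) + \delta(\xi_s)$ turns the first condition into $2\lambda_1 \in \BZ$ (nonvacuous when $s \ge 1$) and the second into $\lambda_1 + \frac{n-1}{2} \in \BZ$ (nonvacuous when $2s < n$).

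The three cases of the lemma follow at once. If $\lambda_1 \in \frac{n-1}{2} + \BZ$, both conditions hold for every $s$, giving (1). If $\lambda_1 \in \frac{n}{2} + \BZ$, the first condition holds but the second fails whenever $2s < n$; so we need $2s = n$, which forces $p = q$ and $s = p$, giving (2). If $\lambda_1 \notin \frac{1}{2}\BZ$, the first condition fails whenever $s \ge 1$, and for $s = 0$ the second condition would force $\lambda_1 \in \frac{n-1}{2} + \BZ \subseteq \frac{1}{2}\BZ$, contradicting the assumption; hence (3). The main obstacle is the bookkeeping in the $\delta(\xi_s)$ computation: one must correctly decompose the compact imaginary root system as $A_{p-s-1} \times A_{q-s-1}$, compute the two half-sums, and verify the parity of the resulting arithmetic sequences. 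After that, the case analysis is a formal check.
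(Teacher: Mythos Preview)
Your proposal is correct and follows exactly the template the paper has in mind: the paper's own proof simply reads ``The same as in the proof of Lemma \ref{gp char}, \ref{3.4}, and \ref{3.7}, we omit the details here,'' and what you have written is precisely the omitted computation---reduce to the representative $H_s$ with the standard $\xi_s$, compute $\delta(\xi_s)$, identify the integrality conditions on each factor of $(\BC^\times)^s \times (\BS^1)^{n-2s}$, and read off the three cases. Your parity check that every entry of $\delta(\xi_s)$ in positions $2s+1,\dots,n$ lies in $\tfrac{n-1}{2}+\BZ$, and your identification of the character condition on the $\BC^\times$ factors as $\mu_i+\mu_{i+s}\in\BZ$ (because $f_{i+s}$ corresponds to $\bar z_i^{-1}$ rather than $\bar z_i$), are both correct and are exactly the points where the $\U(p,q)$ case differs in bookkeeping from the $\GL_n(\BR)$ case.
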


\begin{proof}
    The same as in the proof of Lemma \ref{gp char}, \ref{3.4}, and \ref{3.7}, we omit the details here.
\end{proof}

Now, for a nilpotent orbit $\CO = \CO_{\iota}$.

If $\lambda_i \in \frac{n-1}{2} + \BZ$ for all $i = 1,2,\cdots,k$. $W$ acts transitively on $\CP_s$, we choose the representative element $\upgamma = (H_s,\xi, \Gamma)$ for each $\CP_s$, where $\Gamma$ is determined by $\xi$ since $H_s$ is connected. We can identify $W$ with $W_{\fh_s}$ through the isomorphism $\xi$, then the centralizer $W_{{\gamma}}$ of ${\gamma} = G \cdot \upgamma \in \CP_s$ in $W$ is identified with $W_{H_s} \subseteq W_{\fh_s}$. Now, using Theorem \ref{Coh}, we can give the coherent continuation representation as
\begin{equation}
    \mathrm{Coh}_{\Lambda}(\CK(G)) = \bigoplus_{s = 0}^{\lfloor\frac{\mathrm{min}(p,q)}{2}\rfloor} \Ind^{\mathrm{S}_n}_{(\mathrm{S}_s \ltimes  (W(A_1))^s) \times \mathrm{S}_{p-s} \times \mathrm{S}_{q-s}}1 \boxtimes \mathrm{sgn} \boxtimes \mathrm{sgn} .
\end{equation}
Then, using Pieri's rule \ref{Pieri} and branching formula \ref{branch}, we obtain the number of irreducible representations as
\begin{align*}
    \sharp(\Irr_{\nu}&(G;\CO))\\ 
    &= \left[\phi(\iota): \Ind_{W_\nu}^{W}1\right] \cdot \left[\phi(\iota):\bigoplus_{s = 0}^{\lfloor\frac{\mathrm{min}(p,q)}{2}\rfloor} \Ind^{\mathrm{S}_n}_{(\mathrm{S}_s \ltimes  (W(A_1))^s) \times \mathrm{S}_{p-s} \times \mathrm{S}_{q-s}}1 \boxtimes \mathrm{sgn} \boxtimes \mathrm{sgn}\right]\\
    &= \sharp\left(\mathrm{P}_{A}^{p,q}(\iota(\CO))\right) \cdot \sharp\left(\mathrm{A}_{[d_1,\cdots,d_{k}]}(\iota(\CO))\right).
\end{align*}

If $\lambda_i \in \frac{n}{2} + \BZ$ for all $i = 1,2,\cdots,k$, by the lemma above, there exist irreducible representation of this infinitesimal character only if $p=q$ and $H$ is conjugate to $H_p$. In this case, the centralizer $W_{{\gamma}}$ of ${\gamma} = G \cdot \upgamma \in \CP_p$ in $W$ is identified with $W_{H_p} \subseteq W_{\fh_p}$. Now, using Theorem \ref{Coh}, we can give the coherent continuation representation as
$$\mathrm{Coh}_{\Lambda}(\CK(G)) = \Ind_{\mathrm{W_p}}^{\mathrm{S}_{2p}} 1.$$
Again, using Pieri's rule \ref{Pieri} and branching formula \ref{branch}, we obtain the number of irreducible representations as
\begin{align*}
    \sharp(\Irr_{\nu}&(G;\CO))\\ 
    &= [\phi(\iota): \Ind_{W_\nu}^{W}1] \cdot [\phi(\iota):\Ind_{\mathrm{W_p}}^{\mathrm{S}_{2p}} 1]\\
    &= \sharp(\mathrm{P}_{A}'(\iota(\CO)))\cdot\sharp(\mathrm{A}_{[d_1,\cdots,d_k]}(\iota(\CO))).
\end{align*}

Finally, if $\lambda_i \notin \frac{1}{2}\BZ$ for some $i$, there will be no irreducible representation of this infinitesimal character.

\subsubsection{General case}
For the case of a possibly non-integral infinitesimal character $\nu$.
\begin{lem}
    The parameter set $\mathscr{P}_{\Lambda}(G)$ is nonempty only if the coordinates of $\nu$ can be permuted under the Weyl group $W$ such that it has the form
\begin{equation}
    \nu = (\blam,\blam', \blam_1, \blam_1', \cdots, \blam_r, \blam_r') \in {^{a}\fh}^* = \BC^n,
\end{equation}
where 
\begin{align}
    &\blam = (\lambda_{1}, \lambda_{2} \cdots , \lambda_{l} ) \in \left(\frac{n-1}{2}+ \BZ\right)^{l},\\
    &\blam' = (\lambda_{1}', \lambda_{2}' \cdots , \lambda_{l'}' ) \in \left(\frac{n}{2}+ \BZ\right)^{l'},\\
    &\blam_i = (\lambda_{i,1}, \lambda_{i,2} \cdots , \lambda_{i,l_i} ) \in \BC^{l_i},\\
    &\blam_i' = (\lambda_{i,1}', \lambda_{i,2}' \cdots , \lambda_{i,l_i'}' ) \in \BC^{l_i},
\end{align}
and $\lambda_{i,p} - \lambda_{i,q} \in \BZ$, $\lambda_{i,p}' - \lambda_{i,q}' \in \BZ$, $\lambda_{i,p} + \lambda_{i,q}' \in \BZ$, $\lambda_{i,p} \notin \frac{1}{2}\BZ$, and $l'$ is even. 
\end{lem}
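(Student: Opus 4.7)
The approach is to read off necessary conditions on $\nu$ directly from the existence of a parameter $\upgamma = (H, \xi, \Gamma) \in \mathscr{P}_{\Lambda}(G)$, which by Definition \ref{regular character} amounts to requiring that $\xi(\nu) + \delta(\xi)$ be the differential of a continuous character of $H$. Every Cartan subgroup of $\U(p,q)$ is conjugate to some $H_s \cong (\BC^\times)^s \times (\BS^1)^{p-s} \times (\BS^1)^{q-s}$, so I may assume $H = H_s$ and take $\xi$ to be the distinguished isomorphism from the integral-case analysis; the character condition then splits into independent constraints on the coordinates of $\nu$ placed on the three types of factors.

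The first step is the $\delta$-shift calculation. No imaginary roots are supported on the $(\BC^\times)^s$ block, so $\delta(\xi)$ vanishes there, and the only constraint on a pair of coordinates $(\lambda_{i,p}, \lambda_{i,q}')$ placed on a single $\BC^\times$ factor (with weights $z$ and $\bar z^{-1}$) is $\lambda_{i,p} + \lambda_{i,q}' \in \BZ$. On each of the two compact blocks $(\BS^1)^{p-s}$ and $(\BS^1)^{q-s}$, the explicit lists of compact and noncompact imaginary roots from the integral-case proof allow one to compute the coordinates of $\delta(\xi)$ directly; a short calculation shows that, uniformly in $s$ and uniformly across the two blocks, every such coordinate lies in the coset $\frac{n-1}{2} + \BZ$. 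Consequently a single coordinate of $\nu$ placed on an $\BS^1$ slot must itself lie in $\frac{n-1}{2} + \BZ$.

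With these slot-by-slot constraints in hand, I would partition the coordinates of $\nu$ according to their image in $\BC/\BZ$ and allocate them to the permissible slots. Coordinates in $\frac{n-1}{2} + \BZ$ can occupy $\BS^1$ slots and assemble into $\blam$. Coordinates in $\frac{n}{2} + \BZ$ lie in a different $\BZ$-coset and are therefore excluded from every $\BS^1$ slot; however, the sum of any two of them lies in $\BZ$, so they must be paired up on $\BC^\times$ factors, forcing the total count $l'$ to be even and collecting them into $\blam'$. Coordinates outside $\frac{1}{2}\BZ$ can only pair on $\BC^\times$ with complementary partners of class $-\lambda + \BZ$; grouping such pairs by $\BZ$-coset yields the blocks $(\blam_i, \blam_i')$ satisfying $\lambda_{i,p} - \lambda_{i,q} \in \BZ$, $\lambda_{i,p}' - \lambda_{i,q}' \in \BZ$, $\lambda_{i,p} + \lambda_{i,q}' \in \BZ$, and $\lambda_{i,p} \notin \frac{1}{2}\BZ$. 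Applying a Weyl-group permutation to arrange the coordinates in the stated order then produces the claimed normal form.

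The main technical point is the $\delta$-shift computation itself: one must verify that both compact blocks produce the same coset $\frac{n-1}{2} + \BZ$ independently of $s$, so that coordinates in $\frac{n}{2} + \BZ$ really are excluded from $\BS^1$ placement — this is precisely what distinguishes the $\blam$ block from the $\blam'$ block and forces the parity condition $l'$ even. Once this calculation is carried out (the pattern parallels Lemmas \ref{gp char}, \ref{3.4}, and \ref{3.7}), the remainder is a bucket-sorting argument modeled on the block decompositions already used in Section \ref{3.2} and in the proofs of Theorems \ref{H} and \ref{C}.
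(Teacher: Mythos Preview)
Your proposal is correct and follows exactly the approach the paper intends: the paper's own proof reads in full ``This follows from a direct calculation as in the previous sections, which we omit here,'' and your argument supplies precisely that calculation --- reducing to $H = H_s$, verifying that $\delta(\xi)$ vanishes on the $(\BC^\times)^s$ block and lands in $\tfrac{n-1}{2}+\BZ$ on every $\BS^1$ slot, and then bucket-sorting the coordinates of $\nu$ by their image in $\BC/\BZ$ to obtain the stated normal form and the parity constraint on $l'$.
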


\begin{proof}
    This follows from a direct calculation as in the previous sections, which we omit here.
\end{proof}

If the infinitesimal $\nu$ has the form above, the integral Weyl group is $W_{\Lambda} = W(\Lambda) = \mathrm{S}_{l} \times \mathrm{S}_{l'} \times \mathrm{S}_{l_1} \times \mathrm{S}_{l_1} \times \cdots \times \mathrm{S}_{l_r} \times \mathrm{S}_{l_r} \subseteq \mathrm{S}_n = W$.

Fix a decomposition  
\[
V = W \oplus W' \oplus W_1 \oplus W_1' \oplus \cdots \oplus W_r \oplus W_r',
\]  
where \(\dim W = l\), and the Hermitian form restricts to a nondegenerate form with signature \((p', q')\) on it, where  
\[
p' = p - \left(\frac{l'}{2} + l_1 + \cdots + l_r \right), \quad 
q' = q - \left(\frac{l'}{2} + l_1 + \cdots + l_r \right).
\]  
Similarly, \(\dim W' = l'\), and the Hermitian form restricts to a nondegenerate form with signature \((l', l')\) on it.  
For each \( i \), we have \(\dim W_i = \dim W_i' = l_i\), where \( W_i \) and \( W_i' \) are isotropic, and \( W_i \oplus W_i' \) is nondegenerate with signature \((l_i, l_i)\).  

This decomposition corresponds to a group  
\[
\G_{A}(W) \times \G_{\star}(W') \times \GL(W_1) \times \cdots \times \GL(W_r),
\]  
here we set $\star = A$ if $p+q$ is even, and $\star = \tilde{A}$ if $p+q$ is odd.

Let \( {^{a}\fh_0} \) and \( {^{a}\fh_0'} \) denote the abstract Cartan subalgebras of \( \G_{A}(W) \) and \( \G_{\star}(W') \), respectively.  
Let \( {^{a}\fh_i} \) denote the abstract Cartan subalgebra of \( \GL(W_i) \).  
Then we have an identification
$${^{a}\fh_0^*} \times {^{a}\fh_0'^*} \times {^{a}\fh_1^*} \times {^{a}\fh_r^*} = \BC^{l} \times \BC^{l'} \times \BC^{l_1} \times \BC^{l_1} \times \cdots \times \BC^{l_r} \times \BC^{l_r} = \BC^{n} = {^{a}\fh^*},$$
under this identification, we have \( \blam \in {^{a}\fh_0^*} \), \( \blam' \in {^{a}\fh_0'^*} \), and \( (\blam_i,\blam_i') \in {^{a}\fh_i^*} \), let \( Q_0 \), \( Q_0' \), and \( Q_i \) be the analytical weight lattices of \( \G_{A}(W) \), \( \G_{\star}(W') \), and \( \GL(W_i) \), respectively, and let \( \Lambda_0 = \blam + Q_0 \), \( \Lambda_0' = \blam' + Q_0' \), and \( \Lambda_i = \blam_i + Q_i \). There is an identification of integral Weyl groups
$$W(\Lambda_0) \times W(\Lambda_0') \times W(\Lambda_1) \times \cdots \times W(\Lambda_r) = \mathrm{S}_{l} \times \mathrm{S}_{l'} \times \mathrm{S}_{l_1} \times \mathrm{S}_{l_1} \times \cdots \times \mathrm{S}_{l_r} \times \mathrm{S}_{l_r} = W(\Lambda).$$

We can define $\CP_{\Lambda_0}(\G_{A}(W))$, $\CP_{\Lambda_0'}(\G_{A}(W'))$, $\CP_{\Lambda_0'}'(\G_{\tilde{A}}(W'))$ , and \( \CP_{\Lambda_i}(\GL(W_i)) \) for each \( i = 1,2,\cdots,r \), where $\CP_{\Lambda_0'}'(\G_{\tilde{A}}(W'))$ denotes the set of genuine parameters in $\CP_{\Lambda_0'}(\G_{\tilde{A}}(W'))$.

If $p+q$ is even we can define the following map the same as in Section \ref{3.2}.
\defmap{\varphi}{\CP_{\Lambda_0}(\G_{A}(W))\times \CP_{\Lambda_0'}(\G_{A}(W'))\times \CP_{\Lambda_1}(\GL(W_1))\times \cdots \times \CP_{\Lambda_r}(\GL(W_r))}{\CP_{\Lambda}(G)}{((H_0,\xi_0,\Gamma_0), (H_0',\xi_0',\Gamma_0'), (H_1,\xi_1,\Gamma_1), \cdots, (H_r,\xi_r,\Gamma_r))}{(H,\xi,\Gamma)}

If $p+q$ is odd, we can define the following map
\defmap{\varphi}{\CP_{\Lambda_0}(\G_{A}(W))\times \CP_{\Lambda_0'}'(\G_{\tilde{A}}(W'))\times \CP_{\Lambda_1}(\GL(W_1))\times \cdots \times \CP_{\Lambda_r}(\GL(W_r))}{\CP_{\Lambda}(G)}{((H_0,\xi_0,\Gamma_0), (H_0',\xi_0',\Gamma_0'), (H_1,\xi_1,\Gamma_1), \cdots, (H_r,\xi_r,\Gamma_r))}{(H,\xi,\Gamma)}
where $H = H_0 \times (H_0')_0 \times H_1 \times \cdots \times H_r$, $(H_0')_0$ is the identity component of $H_0'$. $\xi$ and $\Gamma$ are defined similarly as in the even case. 

Using arguments analogous to those in Proposition \ref{varphi bij} and \ref{Coh cong}, we obtain the following results.
\begin{prop}
    The map \(\varphi\) defined above is bijective. 
\end{prop}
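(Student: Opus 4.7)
The plan is to imitate the proof of Proposition \ref{varphi bij} from the $A^\BR$ case, with additional bookkeeping for the three types of blocks in the Levi decomposition and for the double cover that appears when $p+q$ is odd. The strategy is to construct an explicit inverse to $\varphi$: given $(H,\xi,\Gamma)\in\mathscr{P}_\Lambda(G)$, I would first conjugate $H$ into the chosen Levi and then read off the factor data from the block decomposition of $V$.

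First I would show that every $(H,\xi,\Gamma)\in\mathscr{P}_\Lambda(G)$ is $G$-conjugate to one whose $H$ sits inside $\G_A(W)\times\G_\star(W')\times\GL(W_1)\times\cdots\times\GL(W_r)$. The key input is that $\xi(\nu)+\delta(\xi)$ must be the differential of a continuous character of $H$: combining this with the preceding integrality lemma and with the hypothesis that $\blam,\blam',\blam_i,\blam_i'$ have pairwise non-integral differences between distinct integrality blocks, one sees that any two $H$-weight spaces of the standard representation whose $\xi$-images differ by an integer must either lie in the same Hermitian summand (for the $\blam$ and $\blam'$ blocks) or form an isotropic-dual pair (one such pair for each index $i$). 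Up to $G$-conjugation this realises the block decomposition of $V$ fixed in the construction of $\varphi$.

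Once $H$ lies inside the Levi it factors as $H=H_0\times H_0'\times H_1\times\cdots\times H_r$, with $(H_0')_0$ in place of $H_0'$ in the odd case. The map $\xi$ then factors as $\xi_0\times\xi_0'\times\xi_1\times\cdots\times\xi_r$, because a Borel subalgebra of $\fg$ containing $\fh$ and respecting the block decomposition is the same data as a tuple of Borels in the factors. The conditions on $\Gamma$ in Definition \ref{regular character} split block by block into the corresponding conditions on each $\Gamma_0,\Gamma_0',\Gamma_i$, since they involve only the differentials on a single factor of $H$. This produces a candidate inverse of $\varphi$, and bijectivity then follows from a direct check that the two constructions are mutually inverse.

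The hardest step is the odd $p+q$ case: here $\delta(\xi)$ restricted to the $W'$-block contributes a half-integer shift that is absent for the linear $\U(l'/2,l'/2)$ Cartan, so the induced character on $(H_0')_0$ does not extend to a character of $H_0'\subseteq\U(l'/2,l'/2)$ but does extend to a genuine character of the double cover $\tilde{\U}(l'/2,l'/2)$. I would verify that the parity of this half-integer shift exactly matches the genuineness condition cutting out $\CP'_{\Lambda_0'}(\G_{\tilde{A}}(W'))$ inside $\CP_{\Lambda_0'}(\G_{\tilde{A}}(W'))$, so that the candidate inverse really does land in the genuine parameter set on that factor and the product construction of $\varphi$ surjects onto it.
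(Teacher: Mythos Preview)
Your proposal is correct and follows the same approach as the paper. In fact the paper's own argument here is just the single line ``using arguments analogous to those in Proposition~\ref{varphi bij}'', so your write-up is already more detailed than what the paper provides---in particular your explicit handling of the half-integer shift in $\delta(\xi)$ on the $W'$-block and its match with the genuineness condition on $\CP'_{\Lambda_0'}(\G_{\tilde{A}}(W'))$ spells out the one point that genuinely goes beyond the $A^{\BR}$ template.
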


\begin{prop}
    There is a natural isomorphism of \(W(\Lambda)\)-representations
    \begin{align*}
        \varphi : \quad  &\mathrm{Coh}_{\Lambda_0}(\CK(\G_{A}(W))) \boxtimes \mathrm{Coh}_{\Lambda_0'}(\CK(\G_{A}(W'))) \boxtimes \mathrm{Coh}_{\Lambda_1}(\CK(\GL(W_1))) \boxtimes \cdots \\ 
        &\boxtimes \mathrm{Coh}_{\Lambda_r}(\CK(\GL(W_r))) \longrightarrow \mathrm{Coh}_{\Lambda}(\CK(G)),
    \end{align*}
    if $p+q$ is even, and
    \begin{align*}
        \varphi : \quad  &\mathrm{Coh}_{\Lambda_0}(\CK(\G_{A}(W))) \boxtimes \mathrm{Coh}_{\Lambda_0'}(\CK^{\mathrm{gen}}(\G_{\tilde{A}}(W'))) \boxtimes \mathrm{Coh}_{\Lambda_1}(\CK(\GL(W_1))) \boxtimes \cdots \\ 
        &\boxtimes \mathrm{Coh}_{\Lambda_r}(\CK(\GL(W_r))) \longrightarrow \mathrm{Coh}_{\Lambda}(\CK(G)),
    \end{align*}
    if $p+q$ is odd.
\end{prop}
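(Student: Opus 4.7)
The plan is to mimic the proof of Proposition \ref{Coh cong}. By \cite[Proposition 6.6.7]{Vog81} and \cite[Theorem 6.5.12]{Vog81}, the families $\{\Psi_{\gamma}\}$ of standard modules furnish vector-space bases of the coherent continuation modules on both sides. The parameter bijection $\varphi$ from the preceding proposition therefore induces a unique vector-space isomorphism on coherent family spaces, sending $\Psi_{\gamma_0} \otimes \Psi_{\gamma_0'} \otimes \Psi_{\gamma_1} \otimes \cdots \otimes \Psi_{\gamma_r}$ to $\Psi_{\varphi(\gamma_0, \gamma_0', \gamma_1, \ldots, \gamma_r)}$. The content of the proposition is the $W(\Lambda)$-equivariance of this map.

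According to \cite[Definition 14.4]{Vog82}, the $W(\Lambda)$-action on a coherent family expressed in the standard basis is determined by two ingredients: the cross action on parameters together with the Cayley transformations associated with integral real and imaginary roots. I would verify equivariance for each ingredient separately. For the cross action, the formula $w \times (H, \xi, \Gamma) = (H, \xi \circ w^{-1}, \ldots)$ is block-diagonal with respect to the factorization $H = H_0 \times (H_0')_0 \times H_1 \times \cdots \times H_r$, $\xi = \xi_0 \times \xi_0' \times \xi_1 \times \cdots \times \xi_r$, and $W(\Lambda) = W(\Lambda_0) \times W(\Lambda_0') \times W(\Lambda_1) \times \cdots \times W(\Lambda_r)$, so compatibility reduces to direct verification.

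For the Cayley transforms, the key observation is that each integral coroot $\check{\alpha} \in \check{\Delta}(\Lambda)$ of $\fg$ lies in the integral root system of a unique factor, as a consequence of the non-integrality conditions built into the hypothesis (the components $(\blam_i, \blam_j)$ are non-integral for $i \neq j$, and $\lambda_{i,p} \notin \frac{1}{2}\BZ$ ensures no spurious integral coroot couples a factor with $\blam$ or $\blam'$). Hence every Cayley transform on $G$ arises inside a single factor subgroup and matches the Cayley transform on the corresponding $\G_{A}(W)$, $\G_{A/\tilde{A}}(W')$, or $\GL(W_i)$.

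The main subtlety lies in the odd case $p+q$ odd, where the factor $H_0'$ corresponds to $\G_A(W') = \U(l'/2, l'/2)$ but $\varphi$ must target genuine parameters of the double cover $\G_{\tilde{A}}(W')$. Here I would invoke the bijection $\psi$ from Section 2.2, which identifies genuine representations of $\tilde{\U}(l'/2, l'/2)$ with infinitesimal character $\blam'$ with ordinary representations of $\U(l'/2, l'/2)$ with infinitesimal character $\blam' - \mu$, where $\mu = (\frac{1}{2}, \ldots, \frac{1}{2})$. The half-integrality of $\blam'$ is precisely what is needed for $\blam' - \mu$ to lie in the analytic weight lattice of $\U(l'/2, l'/2)$, and passing to the identity component $(H_0')_0$ of $H_0'$ on the $G$-side corresponds exactly to extracting the genuine sector via $\psi$. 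Once this dictionary is set up, the cross action and Cayley transform compatibilities reduce to those of the even case. This matching of metaplectic data with the identity-component Cartan is where I expect the most bookkeeping, but the construction is forced by Lemma \ref{gp char}-type integrality considerations and by the definition of genuine parameters.
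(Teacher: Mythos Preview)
Your proposal is correct and follows essentially the same approach as the paper: the paper's proof simply refers back to Proposition~\ref{Coh cong}, whose proof amounts to checking that the parameter bijection is equivariant for the cross action and for Cayley transforms, and then invoking \cite[Definition 14.4]{Vog82}. Your write-up is in fact more detailed than the paper's, particularly in isolating why integral roots lie in a single factor and in handling the odd $p+q$ case via the genuine-sector bijection $\psi$, points which the paper leaves implicit.
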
  

For a nilpotent orbit $\CO = \CO_{\iota}$, if $p+q$ is even we obtain
\begin{align*}
    \sharp(\Irr_{\nu}&(G;\CO))\\ 
    &= \sum_{\sigma \in \Irr(W_{\Lambda};\CO)}[1_{W_\nu}:\sigma]\cdot[\sigma: \mathrm{Coh}_{\Lambda}(\CK(G))]\\
    &=  \sum_{\substack{(\iota, \iota', \iota_1,\iota_1' \cdots,\iota_r, \iota_r') \in \mathrm{YD}_{l} \times \mathrm{YD}_{l'} \times \mathrm{YD}_{l_1} \times \mathrm{YD}_{l_1} \times \cdots \times \mathrm{YD}_{l_r} \times \mathrm{YD}_{l_r}  \\ \iota \mathop{\sqcup}\limits^r \iota' \mathop{\sqcup}\limits^r \iota_1 \mathop{\sqcup}\limits^r \iota_1 \mathop{\sqcup}\limits^r \cdots  \mathop{\sqcup}\limits^r \iota_r \mathop{\sqcup}\limits^r \iota_r   = \iota(\CO)}} \sharp\left(\Irr_{\blam}\left(\U(p',q');\CO( \iota' )\right)\right) \\
    & \cdot \sharp\left(\Irr_{\blam'}\left(\U\left(\frac{l'}{2},\frac{l'}{2}\right);\CO(\iota)\right)\right)\cdot \prod_{i=1}^{r} \sharp\left(\Irr_{(\blam_i,\blam_i')}(\GL_{l_i}(\BC);\CO(\iota_i)\times \CO(\iota_i'))\right).
\end{align*}

If $p+q$ is odd, we obtain
\begin{align*}
    \sharp ( \Irr_{\nu}& ( G;\CO ))  \\ 
    &= \sum_{\sigma \in \Irr(W_{\Lambda};\CO)}[1_{W_\nu}:\sigma]\cdot[\sigma: \mathrm{Coh}_{\Lambda}(\CK(G))]\\
    &= \sum_{\substack{(\iota, \iota', \iota_1,\iota_1' \cdots,\iota_r, \iota_r') \in \mathrm{YD}_{l} \times \mathrm{YD}_{l'} \times \mathrm{YD}_{l_1} \times \mathrm{YD}_{l_1} \times \cdots \times \mathrm{YD}_{l_r} \times \mathrm{YD}_{l_r}  \\ \iota \mathop{\sqcup}\limits^r \iota' \mathop{\sqcup}\limits^r \iota_1 \mathop{\sqcup}\limits^r \iota_1 \mathop{\sqcup}\limits^r \cdots  \mathop{\sqcup}\limits^r \iota_r \mathop{\sqcup}\limits^r \iota_r   = \iota(\CO)}} \sharp(\Irr_{\blam}(\U(p',q');\CO(\iota')))\\
    & \cdot \sharp\left(\Irr_{\blam'}^{\mathrm{gen}}\left(\tilde{\U}\left(\frac{l'}{2},\frac{l'}{2}\right);\CO(\iota)\right)\right)\cdot \prod_{i=1}^{r} \sharp\left(\Irr_{(\blam_i,\blam_i')}(\GL_{l_i}(\BC);\CO(\iota_i)\times \CO(\iota_i'))\right).
\end{align*}
Thus, the second part of Theorem \ref{U} follows.

\end{document}